\numberwithin{equation}{section}
\renewcommand{\AA}{\mathbb A}
\newcommand{\FF}{\mathbb F}
\newcommand{\PP}{\mathbb P}
\newcommand{\QQ}{\mathbb Q}
\newcommand{\RR}{\mathbb R}
\newcommand{\ZZ}{\mathbb Z} 
\newcommand{\Zhat}{\widehat\ZZ}
\newcommand{\OO}{\mathcal O}
\newcommand{\calG}{G}  
\newcommand{\calJ}{\mathcal J}
\newcommand{\calB}{\mathcal B}
\newcommand{\aA}{\mathfrak a} 
\newcommand{\p}{\mathfrak p}
\newcommand{\Pp}{\mathfrak P} 
\newcommand{\norm}[1]{ \left|\!\left| #1 \right|\!\right|  }
\def\cyc{{\operatorname{cyc}}}
\def\ab{{\operatorname{ab}}}
\def\Spec{\operatorname{Spec}} 
\def\Gal{\operatorname{Gal}}
\def\ord{\operatorname{ord}} 
\def \GL {\operatorname{GL}}  
\def \SL {\operatorname{SL}}
\def \PSL {\operatorname{PSL}}
\def\Aut{\operatorname{Aut}} 
\def\Frob{\operatorname{Frob}}
\newcommand{\tors}{{\operatorname{tors}}}
\def\pr{\operatorname{pr}}
\def\bbar#1{\setbox0=\hbox{$#1$}\dimen0=.2\ht0 \kern\dimen0 
\overline{\kern-\dimen0 #1}}
\newcommand{\Qbar}{{\overline{\mathbb Q}}} 
\newcommand{\kbar}{\bbar{k}} 
\newcommand{\FFbar}{\overline{\FF}} 
\newtheorem{thm}{Theorem}[section]
\newtheorem{lemma}[thm]{Lemma}
\newtheorem{prop}[thm]{Proposition}
\theoremstyle{definition}
\newtheorem{definition}[thm]{Definition}
\theoremstyle{remark}
\newtheorem{remark}[thm]{Remark}
\newenvironment{romanenum}{\hfill \begin{enumerate} }{\end{enumerate}}
\newenvironment{alphenum}{\hfill \begin{enumerate} }{\end{enumerate}}
\definecolor{webcolor}{rgb}{0.8,0,0.2}
\definecolor{webbrown}{rgb}{.6,0,0}
\begin{document}
\title[]{Elliptic curves with maximal Galois action on their torsion points}
\subjclass[2000]{Primary 11G05; Secondary 11F80, 11N36}
\keywords{elliptic curves, Galois representations, sieve methods}
\tableofcontents 
\author{David Zywina}
\address{Department of Mathematics, University of Pennsylvania, Philadelphia, PA 19104-6395, USA}
\email{zywina@math.upenn.edu}
\urladdr{http://www.math.upenn.edu/\~{}zywina}
\date{\today}

\maketitle

\begin{abstract}
Given an elliptic curve $E$ over a number field $k$, the Galois action on the torsion points of $E$ induces a Galois representation,  $\rho_E \colon \Gal(\overline{k}/k) \to \GL_2(\widehat{\mathbb{Z}}).$  
For a fixed number field $k$, we describe the image of $\rho_E$ for a ``random'' elliptic curve $E$ over $k$.  In particular, if $k\neq \mathbb{Q}$ is linearly disjoint from the cyclotomic extension of $\mathbb{Q}$, then $\rho_E$ will be surjective for ``most'' elliptic curves over $k$.
\end{abstract}

\section{Introduction}
Let $E$ be an elliptic curve defined over a number field $k$.  For each positive integer $m$, let $E[m]$ be the group of $m$-torsion points in $E(\kbar)$.    The group $E[m]$ is isomorphic to $(\ZZ/m\ZZ)^2$ and is acted on by the absolute Galois group $G_k:=\Gal(\kbar/k)$.  This action may be expressed in terms of a Galois representation,
\[
\rho_{E,m} \colon \calG_k \to \Aut(E[m]) \cong \GL_2(\ZZ/m\ZZ).
\]
Taking the inverse limit over all $m$ (ordered by divisibility), we have a Galois representation
\[
\rho_E \colon \calG_k \to \Aut(E(\kbar)_\tors) \cong \GL_2(\Zhat).
\]
%
In this paper, we will describe how large the image of $\rho_E$ is for a ``random'' elliptic curve over $k$.  \\

A renowned theorem of Serre (\cite{Serre-Inv72}*{Th\'eor\`eme~3}) says that if $E$ does not have complex multiplication, then $\rho_E(G_k)$ has finite index in $\GL_2(\Zhat)$.  In the same paper, Serre shows that $\rho_E(\calG_\QQ)\neq \GL_2(\Zhat)$ for every elliptic curve $E$ over $\QQ$.

The first example of a surjective $\rho_E$ was given in the doctoral thesis of A.~Greicius \cite{GThesis}.  Let $\alpha\in \Qbar$ be a root of the polynomial $x^3+x+1$, and let $E/\QQ(\alpha)$ be the elliptic curve given by the Weierstrass equation $y^2+2xy+\alpha y = x^3 - x^2$.  In \cite{GThesis}*{\S3.3}, it is  shown that $\rho_E(\calG_{\QQ(\alpha)})=\GL_2(\Zhat)$.    

\subsection{Statement of results}

Denote the ring of integers of $k$ by $\OO_k$.  For $(a,b) \in \OO_k^2$, define $\Delta_{a,b} = -16(4a^3+27b^2)$.  If $\Delta_{a,b}\neq 0$, then let $E(a,b)$ be the elliptic curve over $k$ defined by the Weierstrass equation 
\[
Y^2=X^3+aX+b.
\]   
Fix a norm $\norm{\cdot}$ on $\RR\otimes_\ZZ \OO_k^2 \cong \RR^{2[k:\QQ]}$.   For each real number $x>0$, define the set
\[
B_k(x) = \{ (a,b) \in \OO_k^2 : \Delta_{a,b} \neq 0,\, \norm{(a,b)}\leq x \}.
\]
Thus to each pair $(a,b)\in B_k(x)$, we can associate an elliptic curve $E(a,b)$ over $k$. 
Note that 
\begin{equation} \label{E:box asymptotics}
|B_k(x)|\sim \kappa x^{2[k:\QQ]}
\end{equation} 
as $x\to \infty$, where $\kappa>0$ is a constant which depends on $k$ and $\norm{\cdot}$.

The following theorem addresses a question of Greicius on the surjectivity of the $\rho_E$ (\cite{GThesis}*{\S3.4 Problem~3}).  Let $\QQ^\cyc\subseteq \kbar$ be the cyclotomic extension of $\QQ$.
\begin{thm} \label{T:main theorem 1}
Suppose that $k\cap \QQ^{\cyc} =\QQ$ and $k\neq \QQ$.  Then
\[
\lim_{x\to \infty} \frac{ |\{ (a,b) \in B_k(x) :  \rho_{E(a,b)}(\calG_k) = \GL_2(\Zhat) \}| }{ |B_k(x)| } = 1.
\]
\end{thm}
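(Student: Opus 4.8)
The plan is to show that $\mathcal{B}(x):=\{(a,b)\in B_k(x):\rho_{E(a,b)}(\calG_k)\neq\GL_2(\Zhat)\}$ has size $o(|B_k(x)|)$, which by \eqref{E:box asymptotics} is exactly the assertion. First a trivial reduction: the elliptic curves with complex multiplication have one of finitely many $j$-invariants and so account for only $O(x^{2[k:\QQ]-1})$ pairs $(a,b)$; these all lie in $\mathcal{B}(x)$ (a CM curve has mod-$\ell$ image inside a Cartan normalizer for every $\ell$), so their negligibility is part of what must be proved, but it lets me assume $E=E(a,b)$ is non-CM from now on. Next I would invoke the group-theoretic criterion for maximality: a closed subgroup $H\subseteq\GL_2(\Zhat)$ equals $\GL_2(\Zhat)$ if and only if $\det H=\Zhat^{\times}$, the reduction of $H$ modulo $\ell$ is $\GL_2(\FF_\ell)$ for every prime $\ell$, and the reduction of $H$ modulo $N_0$ is $\GL_2(\ZZ/N_0\ZZ)$ for a fixed integer $N_0$ divisible only by $2$ and $3$ --- the condition at $N_0$ accounting for the imperfection of $\SL_2(\FF_2)$ and $\SL_2(\FF_3)$ and for $2$-adic/$3$-adic entanglement (for $\ell\geq 5$, surjectivity mod $\ell$ already forces it $\ell$-adically), while entanglement between two distinct primes $\ell_1\neq\ell_2$ shows up as failure of surjectivity mod $\ell_1\ell_2$. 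For $H=\rho_E(\calG_k)$ the determinant condition holds automatically: $\det\circ\rho_E$ is the full cyclotomic character, with image $\Gal(\QQ^{\cyc}/(k\cap\QQ^{\cyc}))=\Gal(\QQ^{\cyc}/\QQ)=\Zhat^{\times}$ precisely because $k\cap\QQ^{\cyc}=\QQ$; this is also why that hypothesis is sharp, and $k\neq\QQ$ is needed because over $\QQ$ the determinant is onto while $\rho_E$ never is (Serre). Thus it suffices to bound the number of $(a,b)\in B_k(x)$ for which $\rho_{E(a,b),m}$ is non-surjective, where $m$ runs over the primes, the products $\ell_1\ell_2$ of two primes, and $N_0$.

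For this I would develop and apply a large-sieve inequality over $\OO_k^2$. The point is that each possible kind of non-maximality modulo a prime $\ell$ imposes, for \emph{every} prime $\p$ of $\OO_k$ of good reduction with $\p\nmid\ell$, a congruence on the trace $a_\p=a_\p(E(a,b))$ that excludes a positive proportion of the residues $(a,b)\bmod\p$. Indeed, if $\rho_{E,\ell}(\calG_k)$ lies in a Borel subgroup, then writing the semisimplification of $E[\ell]$ via characters $\chi,\chi'$ with $\chi\chi'$ cyclotomic gives $a_\p^2-4N\p\equiv(\chi(\Frob_\p)-\chi'(\Frob_\p))^2$, a square mod $\ell$; if it lies in the normalizer of a split (resp.\ nonsplit) Cartan subgroup, then for each such $\p$ either $a_\p^2-4N\p$ is a square (resp.\ nonsquare) mod $\ell$, or $a_\p\equiv 0\pmod\ell$, the latter holding exactly for the $\p$ inert in the associated quadratic extension; and if the projective mod-$\ell$ image is $A_4$, $S_4$, or $A_5$, then $(\tr\Frob_\p)^2/(\det\Frob_\p)\in\FF_\ell$ takes one of at most five values. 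Using the Weil bound for the number of $\FF_\p$-points on the modular curves $X_0(\ell)$, $X^{+}_{\mathrm{sp}}(\ell)$, $X^{+}_{\mathrm{ns}}(\ell)$ (and the finitely many exceptional ones), together with Serre's classification of the maximal subgroups of $\GL_2(\FF_\ell)$, one shows that for $\p$ with $N\p$ somewhat larger than $\ell^2$ the \emph{complementary} residues --- those incompatible with any given type of non-maximality at $\ell$ --- have density bounded below by an absolute positive constant, uniformly in $\ell$. Feeding this into the large sieve with sieving bound $z=x^{1/2}$ shows that for each fixed prime $\ell\leq x^{1/4}$ the pairs $(a,b)\in B_k(x)$ with $\rho_{E(a,b),\ell}$ non-surjective number $O(x^{2[k:\QQ]-1/2}\log x)$; because the implied constant is uniform in $\ell$, summing over all primes $\ell\leq x^{1/4}$, over the products $\ell_1\ell_2\leq x^{1/4}$, and over $N_0$ still yields a total that is $o(|B_k(x)|)$.

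The main obstacle is the tail: pairs $(a,b)\in B_k(x)$ with $\rho_{E(a,b),\ell}$ non-surjective for some prime $\ell>x^{1/4}$. Here there are no longer enough primes $\p$ with $N\p>\ell^2$ to feed the sieve, and instead one must bound $\ell$ itself. The exceptional case is easiest to dispose of once $M$ is above an absolute constant: by Serre's analysis of the inertia action at a prime above $\ell$ --- built on Raynaud's bound for the ramification of $E[\ell]$ --- the image of inertia contains an element of order comparable to $\ell$, which cannot lie in a subgroup of $\GL_2(\FF_\ell)$ whose projective image is $A_4$, $S_4$, or $A_5$. For the Borel case one uses that a $k$-rational cyclic $\ell$-isogeny yields an isogeny character of conductor dividing the conductor $N_E\ll x^{3[k:\QQ]}$ and with controlled restriction to inertia at $\ell$; this bounds $\ell$ by a fixed power of $N_E$ (hence of $x$), reducing to a range on which the sieve of the previous paragraph applies, while Merel's uniform boundedness theorem --- applied over the bounded-degree field cut out by the character --- rules out the residual cases of very small isogeny character (the CM curves being exactly what these bounds cannot reach, but they have already been removed). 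The Cartan-normalizer case is the most delicate: over the quadratic extension $K_\ell/k$ --- unramified outside $2$ and the bad primes of $E$, since $\ell$ is odd --- the mod-$\ell$ image is abelian, so $\rho_{E,\ell}(\Frob_\p)$ has trace $\equiv 0\pmod\ell$ for every $\p$ inert in $K_\ell$; for such $\p$ with $N\p<\ell^2/16$ this forces $E$ supersingular at $\p$, so $E$ would be supersingular at a positive proportion (half, by Chebotarev) of the primes of norm up to $\ell^2/16$ --- impossible, bounding $N_E$ by a power of $x$ and using the sparsity of supersingular primes for a non-CM elliptic curve, once $\ell$ exceeds an explicit bound depending only on $x$; the remaining bounded range of $\ell$ is again handled by the sieve. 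Combining the three cases gives $|\mathcal{B}(x)|=o(|B_k(x)|)$.

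The single recurring difficulty throughout is uniformity in the prime $\ell$ --- in the local density estimates for the large sieve, and especially in the tail, where one cannot afford a bound on the relevant $\ell$ that grows with the curve faster than the sieve can absorb. Once those uniform estimates are secured, the large sieve mechanically delivers the stated density statement.
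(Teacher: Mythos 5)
Your overall strategy (reduce to a finite list of mod-$m$ surjectivity conditions, then bound the failures of each by the large sieve using local conditions at primes $\p$ of $k$) is the right shape, but your group-theoretic criterion for $\rho_E(\calG_k)=\GL_2(\Zhat)$ is false, and this hides the central difficulty of the theorem. Knowing that $\det H=\Zhat^\times$, that $H\bmod \ell=\GL_2(\ZZ/\ell\ZZ)$ for all primes $\ell$, that $H\bmod \ell_1\ell_2$ is full for all products of two primes, and that $H\bmod N_0$ is full for an $N_0$ supported on $\{2,3\}$, does \emph{not} force $H=\GL_2(\Zhat)$: by Goursat, the obstructions to going from prime-power moduli to general moduli live in common \emph{abelian} quotients, and $\GL_2(\ZZ/m\ZZ)^\ab$ surjects onto $(\ZZ/m\ZZ)^\times$, so these abelian quotients are not coprime across different primes. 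Concretely, the index-two subgroup of $\GL_2(\Zhat)$ cut out by comparing the sign character of $\GL_2(\ZZ/2\ZZ)$ with a quadratic Dirichlet character of conductor divisible by two or more odd primes reduces onto the full group modulo every prime, every product of two primes, and every $N_0$ supported on $\{2,3\}$, and has full determinant. For an elliptic curve this subgroup is realized exactly when $\sqrt{\Delta}\in k^{\cyc}$ (the Serre obstruction), and the hypothesis $k\cap\QQ^{\cyc}=\QQ$ does not prevent this for individual curves $E(a,b)$. The paper's criterion (Proposition~\ref{P:Criterion}) therefore has, in addition to the mod-$\ell$ and mod-$4$, mod-$9$ conditions, the conditions $\sqrt{\Delta}\notin k^{\cyc}$ and ($\mu_3\not\subseteq k$ or $\sqrt[3]{\Delta}\notin k^{\cyc}$), and an entire section (\S\ref{S:discriminants}) is devoted to a separate sieve showing these hold for almost all $(a,b)$ --- an argument that crucially uses $[k:\QQ]\geq 2$ to get a nontrivial local density $1-1/r^{[k:\QQ]-1}+o(1)$. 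Your proposal contains nothing that addresses this obstruction, so as written it cannot distinguish $k\neq\QQ$ from $k=\QQ$ beyond the determinant condition, and the theorem is false over $\QQ$ even for curves with surjective determinant and surjective $\rho_{E,m}$ for every prime power $m$.

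A second, less fatal, divergence: your treatment of the ``tail'' $\ell>x^{1/4}$ via isogeny characters, Merel, supersingular-prime counts, and inertia at $\ell$ is both much harder than necessary and not clearly uniform enough to close. The paper instead quotes Masser--W\"ustholz (Theorem~\ref{T:MasserWustholz}): $\rho_{E,\ell}(\calG_k)\supseteq\SL_2(\ZZ/\ell\ZZ)$ once $\ell>c\,(\max\{[k:\QQ],h(j(E))\})^{\gamma}$, and since $h(j(E(a,b)))\ll\log x$ on $B_k(x)$, only $\ell\ll_k(\log x)^{\gamma}$ need be sieved --- a range where uniformity is easy. Also note that no sieving over products $\ell_1\ell_2$ is needed at all: for the $\SL_2$-part, Goursat plus the simplicity of $\PSL_2(\FF_\ell)$ for $\ell\geq5$ and the computation of $\SL_2(\ZZ/m\ZZ)^\ab$ reduce everything to $m\in\{4,9\}\cup\{\ell\geq5\}$ (Lemma~\ref{L:SL(Zhat) Goursat}); the genuine cross-prime entanglement is exactly the discriminant condition you omitted. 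Your local-density input (modular curve point counts and the classification of maximal subgroups) is a workable alternative to the paper's use of Jones's equidistribution theorem together with sieving by conjugacy classes of fixed determinant, but it does not repair the gap above.
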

Intuitively, the theorem says that for a randomly chosen pair $(a,b)\in \OO_k^2$, the corresponding elliptic curve $E(a,b)$ satisfies $\rho_{E(a,b)}(\calG_k)=\GL_2(\Zhat).$  \\

Let $\chi_k\colon \calG_k \to \Zhat^\times$ be the cyclotomic character of $k$.  For each elliptic curve $E$ over $k$, we have $\det \circ \rho_E=\chi_k$.  In particular, the assumption $k\cap \QQ^{\cyc} =\QQ$ (equivalently $\chi_k(\calG_k)=\Zhat^\times$) is necessary for Theorem~\ref{T:main theorem 1}.  For a number field $k$, define the group
\[
H_k := \{ A \in \GL_2(\Zhat) : \det(A) \in \chi_k(\calG_k)\}.
\]
Given an elliptic curve $E$ over $k$, we certainly have $\rho_E(\calG_k)\subseteq H_k$.   Our main theorem, which generalizes Theorem~\ref{T:main theorem 1}, shows that this is the only general constraint for $k\neq \QQ$.
\begin{thm} \label{T:main theorem 2}
Let $k\neq \QQ$ be a number field.  Then
\[
 \frac{ |\{ (a,b) \in B_k(x) :  \rho_{E(a,b)}(\calG_k) \neq H_k \}| }{ |B_k(x)| } 
\ll_{k,\norm{\cdot}} \frac{\log x}{\sqrt{x}}.
\]
\end{thm}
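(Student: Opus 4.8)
The plan is to reduce the condition ``$\rho_{E(a,b)}(\calG_k)=H_k$'' to a list of conditions, each defining a thin subset of $\AA^2$ over $k$, and then to bound, by a quantitative form of Hilbert irreducibility, the number of $(a,b)\in B_k(x)$ violating any one of them. First I would carry out a group-theoretic reduction: there is a fixed integer $M_0=M_0(k)$, with prime support a finite set $S_0$, such that for an elliptic curve $E/k$ one has $\rho_E(\calG_k)=H_k$ if and only if (a)~$\rho_{E,\ell}(\calG_k)=\GL_2(\ZZ/\ell\ZZ)$ for every prime $\ell\notin S_0$; (b)~$\rho_{E,M_0}(\calG_k)$ is the full reduction of $H_k$ modulo $M_0$; and (c)~$k(E[M_0])\cap(k\cdot\QQ^{\cyc})$ equals $k(\mu_{M_0})$, the smallest field the Weil pairing allows. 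The engine is a Goursat-type argument: since $\PSL_2(\ZZ/\ell\ZZ)$ is simple and non-abelian for $\ell\ge 5$ while $\GL_2(\ZZ/\ell\ZZ)^{\ab}$ is cyclic (via $\det$) and $\det\circ\rho_E=\chi_k$ is fixed, no entanglement among the division fields $k(E[\ell^n])$ is possible beyond the one already recorded by $\chi_k$, except between the $2$- and $3$-division fields (and the finitely many primes $\ell$ with $\mu_\ell\subseteq k$) and the cyclotomic tower, and that residual entanglement is visible already at level $M_0$ up to the abelian correction in~(c). For $E=E(a,b)$, condition~(c) is governed by whether $k(\sqrt{\Delta_{a,b}})$, and finitely many analogous quadratic fields, lie inside $k\cdot\QQ^{\cyc}$. \emph{This is exactly where the hypothesis $k\neq\QQ$ is used}: over $\QQ$, Kronecker--Weber forces $\QQ(\sqrt{\Delta_{a,b}})\subseteq\QQ^{\cyc}$ for \emph{every} pair $(a,b)$ --- this is Serre's obstruction --- whereas for $k\neq\QQ$ the set of $(a,b)$ with this property is a \emph{thin} subset of $\AA^2$ over $k$; equivalently, $\rho_{E(a,b)}(\calG_k)=H_k$ for at least one pair $(a,b)$ over $k$, which can be exhibited concretely (Greicius's example is of this shape).

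The remaining task is to bound the number of $(a,b)\in B_k(x)$ failing each condition. For a prime $\ell\notin S_0$ the monodromy representation of the generic curve $Y^2=X^3+aX+b$ over $k(a,b)$ surjects onto $\GL_2(\ZZ/\ell\ZZ)$ --- its geometric part is $\SL_2$ and its determinant is surjective --- so, by Hilbert irreducibility, $\{(a,b):\rho_{E(a,b),\ell}(\calG_k)\neq\GL_2(\ZZ/\ell\ZZ)\}$ is a thin subset of $\AA^2_k$; likewise the loci where (b) or (c) fails are thin. By Cohen's estimate for the number of $k$-rational points of bounded height on a thin set, each such locus meets $B_k(x)$ in $\ll_{k,\norm{\cdot}} x^{2[k:\QQ]-1/2}\log x\asymp |B_k(x)|\,\tfrac{\log x}{\sqrt x}$ points. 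Only finitely many (indeed $\ll_k(\log x)^{O(1)}$) primes $\ell$ must be handled individually: if $\rho_{E(a,b),\ell}(\calG_k)\neq\GL_2(\ZZ/\ell\ZZ)$ with $\det$ onto, the image lies in a Borel subgroup, a Cartan normalizer, or an exceptional subgroup, and in each case the effective open-image theorem (isogeny and height estimates, e.g.\ Masser--W\"ustholz) forces $\ell\ll_k\bigl(1+h(E(a,b))\bigr)^{O(1)}\ll_k(\log x)^{O(1)}$, since $(a,b)\in B_k(x)$ has bounded height; moreover, for the larger $\ell$ in this range $j(E(a,b))$ is pinned to one of $O_k(1)$ CM or cuspidal values, so the contribution of such $\ell$ decays and the summation over $\ell$ stays within $\ll_{k,\norm{\cdot}}|B_k(x)|\,\tfrac{\log x}{\sqrt x}$. (CM curves, for which $\rho_{E,\ell}$ is never surjective, contribute only $\ll_{k,\norm{\cdot}} x^{[k:\QQ]}$ and are negligible.)

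The main obstacle is the precise execution of the first step --- pinning down $M_0$ and, above all, proving that the cyclotomic-entanglement locus in~(c) is thin for \emph{every} number field $k\neq\QQ$, which is precisely the assertion that fails over $\QQ$ --- together with the uniformity needed in the estimates: one must track how the thin loci, and the corresponding savings in Cohen's bound, depend on the varying prime $\ell$ carefully enough that the sum over the (at most $(\log x)^{O(1)}$ many) relevant primes costs no more than a single factor of $\log x$. The savings $\tfrac{\log x}{\sqrt x}$ is the signature of a large sieve of length $\asymp\sqrt x$, which is what underlies Cohen's thin-set estimate in this context.
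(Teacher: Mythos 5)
Your skeleton --- a group-theoretic reduction to finitely many congruence conditions plus a cyclotomic-entanglement condition on $\sqrt{\Delta_{a,b}}$, then sieve bounds for each exceptional locus, with Masser--W\"ustholz restricting to $\ell\ll_k(\log x)^{O(1)}$ --- is the same as the paper's. But the two steps you defer as ``the main obstacle'' are precisely the ones carrying the content of the theorem, and one of them rests on a false premise. The locus $\{(a,b):\sqrt{\Delta_{a,b}}\in k^\cyc\}$ is \emph{not} a thin subset of $\AA^2_k$ in the sense required by Cohen's theorem: it is the union, over the \emph{infinitely many} square classes $\delta(k^\times)^2$ with $\sqrt{\delta}\in k^\cyc$ (essentially one for each squarefree integer), of the thin sets $\{(a,b):\Delta_{a,b}\in\delta(k^\times)^2\}$. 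Quantitative Hilbert irreducibility therefore does not apply, and no finite level $M_0$ captures your condition (c). The paper handles this in \S\ref{S:discriminants} by (i) a structural lemma: if $\sqrt[r]{\Delta}\in k^\cyc$ then $k(\sqrt[r]{\Delta})=Lk$ with $L/\QQ$ abelian, forcing uniform ramification indices over each rational prime and hence $\Delta=m\alpha^r\beta$ with $m\in\ZZ$ and $\beta$ in a \emph{finite} set of $S$-units; and (ii) a large sieve over rational primes $p$ split in $k$, where a Weil-conjecture count shows the local density of this condition is $r\cdot r^{-d}=r^{1-d}$, $d=[k:\QQ]$. That density is $<1$ exactly because $d\geq 2$; this is where $k\neq\QQ$ enters, quantitatively, not through the existence of one curve with maximal image as you suggest.

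Your accounting for the sum over $\ell$ also does not close. If each locus $B_{k,\ell}(x)$ only saves a factor $\sqrt{x}/\log x$, as you assert, then summing over the $(\log x)^{O(1)}$ surviving primes --- with implied constants that moreover grow polynomially in $\ell$ --- yields $(\log x)^{O(1)}/\sqrt{x}$, which exceeds the target $\log x/\sqrt{x}$. The paper escapes this by sieving the mod-$\ell$ conditions with prime ideals $\p$ of $\OO_k$ up to $Q=x^{[k:\QQ]/2}$, so each such locus is bounded by $|B_k(x)|(\log x)^{O(1)}/x^{[k:\QQ]/2}$; since $[k:\QQ]\geq 2$ this is far below $\log x/\sqrt{x}$, with room to absorb every polylogarithmic loss, and only the discriminant condition is sieved at length $\sqrt{x}$. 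Making that sieve uniform in $\ell$ further requires an equidistribution statement for $\rho_{E(r,s),\ell}(\Frob_p)$ over $(r,s)\in\FF_p^2$ with an error term explicit in the level (Jones's theorem, error $O(m^5p^{-1/2})$), together with a pigeonhole over determinant classes to keep the argument effective; these inputs are absent from your sketch.
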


\begin{remark}
Theorem~\ref{T:main theorem 2} shows that the proportion of $(a,b)$ in $B_k(x)$ with $\rho_{E(a,b)}(\calG_k)=H_k$, as a function of $x$,  quickly approaches $1$.  The implicit constant in the theorem is effective and depends only on $k$ and the fixed norm.
\end{remark}

Before continuing, let us introduce some more notation.  Let $E$ be an elliptic curve over a field $k$.  For each positive integer $m$,
denote the fixed field in $\kbar$ of $\ker(\rho_{E,m})$ by $k(E[m])$.

\subsection{The rationals} \label{SS:rationals}
For completeness, we now discuss the case $k=\QQ$ which was excluded from Theorem~\ref{T:main theorem 2}.  Let $E$ be an elliptic curve over $\QQ$, and let $\Delta$ be the discriminant of some Weierstrass model of $E$ over $\QQ$.   There exists an integer $n\geq 1$ such that $\QQ(\sqrt{\Delta}) \subseteq \QQ(\mu_n)$, where $\mu_n$ is the set of $n$-th roots of unity (the assumption $k=\QQ$ is important here!). 

Using that the field $\QQ(\sqrt{\Delta})$ lies in both $\QQ(E[2])$ and $\QQ(\mu_n)\subseteq \QQ(E[n])$, Serre deduced that the index $[\GL_2(\ZZ/2n\ZZ): \rho_{E,2n}(\calG_\QQ)]$ is \emph{even} (for details, see \cite{Serre-Inv72}*{pp.~310-311}) and in particular $\rho_E(\calG_\QQ)\neq \GL_2(\Zhat)$.  Following Lang and Trotter, we make the following definition.
\begin{definition}
An elliptic curve $E$ over $\QQ$ is a \emph{Serre curve} if $[\GL_2(\Zhat) : \rho_{E}(\calG_\QQ) ]=2$.  
\end{definition}

A Serre curve is thus an elliptic curve $E$ over $\QQ$ for which $\rho_E(\calG_\QQ)$ is as large as possible.  For a Serre curve $E/\QQ$, the group $\rho_{E}(\calG_\QQ)$ can be described explicitly in terms of the field $\QQ(\sqrt{\Delta})$.

N.~Jones \cite{Jones} (building on work of Duke \cite{Duke}) has shown that ``most'' elliptic curves over $\QQ$ are Serre curves.  The analogue of Theorem~\ref{T:main theorem 2} is then the following.  
\begin{thm}[Jones]  \label{T:Jones-Serre curves} There is a constant $\beta>0$ such that 
\[
 \frac{ |\{ (a,b) \in B_\QQ(x) : E(a,b) \text{ is not a Serre curve} \}| }{ |B_\QQ(x)| } \ll_{\norm{\cdot}} \frac{(\log x)^\beta}{\sqrt{x}}.
\]
\end{thm}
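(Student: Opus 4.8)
The plan is to follow the strategy of Duke \cite{Duke} and Jones \cite{Jones}: translate the failure of ``$E(a,b)$ is a Serre curve'' into a union of conditions on the mod-$m$ Galois image, bound the number of $(a,b)\in B_\QQ(x)$ satisfying each one, and sum. For every $E/\QQ$ with Weierstrass discriminant $\Delta$ one has the forced entanglement $\QQ(\sqrt{\Delta})\subseteq\QQ(E[2])$ together with $\QQ(\sqrt{\Delta})=\QQ(\sqrt m)\subseteq\QQ(\mu_N)\subseteq\QQ(E[N])$, where $m$ is the squarefree part of $\Delta$ and $N$ the conductor of $\QQ(\sqrt m)$. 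Combining this with Serre's parity analysis of the abelianization of $\GL_2(\Zhat)$ (\cite{Serre-Inv72}*{pp.~310--311}) shows that $E$ is a Serre curve if and only if (a) $\rho_{E,\ell}$ is surjective for every prime $\ell$, and (b) the only coincidences among the fields $\QQ(E[\ell^{a}])$, for $\ell^{a}$ in a fixed finite set (essentially the primes $2,3,5$ and the prime powers $4,9$), are those forced by the entanglement above. That (b) is a \emph{bounded-level} condition once (a) holds is because, for distinct primes $\ell_1\neq\ell_2\geq 5$, every common quotient of $\GL_2(\FF_{\ell_1})$ and $\GL_2(\FF_{\ell_2})$ factors through the determinant, which would force the impossible equality $\QQ(\mu_{\ell_1})=\QQ(\mu_{\ell_2})$; so no entanglement between large distinct prime levels can occur. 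Hence $\{(a,b): E(a,b)\text{ not a Serre curve}\}$ is contained in
\[
\bigcup_{\ell}\{(a,b): \rho_{E(a,b),\ell}(\calG_\QQ)\neq\GL_2(\FF_\ell)\}\ \cup\ \bigcup_{(m,H)}\{(a,b): \rho_{E(a,b),m}(\calG_\QQ)\subseteq H\},
\]
the second union being over a fixed finite list of pairs $(m,H)$ with $H\subsetneq\GL_2(\ZZ/m\ZZ)$.

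\medskip
For each of the finitely many $(m,H)$, the condition $\rho_{E(a,b),m}(\calG_\QQ)\subseteq H$ (up to conjugacy) means $j(E(a,b))$ lies in the image of $X_H(\QQ)$ under the $j$-map. If $X_H$ has positive genus this image is finite (Mazur, Faltings), and each resulting $j$-invariant accounts for only $O(x^{1/3})$ integral models $E(a,b)$ with $\norm{(a,b)}\leq x$, together with the $j\in\{0,1728\}$ families of size $O(x)$. If $X_H\cong\PP^1_\QQ$, the image is a one-parameter family of $j$-invariants, and a height computation on $\PP^1(\QQ)$ bounds the number of $E(a,b)$ in the box with $j$-invariant in that family by $O(x^{1+\epsilon})$ (in fact with a power saving, since the index of $H$ is at least $3$ in all the relevant cases). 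So this part contributes $O(x^{1+\epsilon})$, comfortably within the claimed bound.

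\medskip
The remaining task --- bounding the number of $(a,b)\in B_\QQ(x)$ for which some $\rho_{E(a,b),\ell}$ is not surjective --- is exactly Duke's theorem, is the technical heart, and is where the exponent $\beta$ and the main term $x^{3/2}$ originate. Non-surjectivity mod $\ell$ confines the image to a maximal subgroup of $\GL_2(\FF_\ell)$ (a Borel, the normalizer of a split or nonsplit Cartan, or an exceptional subgroup), hence constrains the Frobenius conjugacy class at every prime $p$ of good reduction --- that is, the pair $(\tr\Frob_p,p)\bmod\ell$, which for $E(a,b)$ depends only on $(a,b)\bmod p$ --- to a proper subset of $\FF_\ell\times\FF_\ell^{\times}$. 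Feeding these congruence constraints into the large sieve with a suitable family of small auxiliary primes, while using Mazur's theorem (and its analogues for the split-Cartan and exceptional cases) to confine $\ell$ to a bounded range for non-CM $E$, and absorbing the nonsplit-Cartan and residual cases uniformly via the sieve (elements outside a Cartan have trace $0$, a genuine congruence), yields the bound $O(x^{3/2}\log x)$; the CM curves contribute only $O(x)$. I expect the delicate points to be this uniform, unconditional treatment of large $\ell$ --- balancing the sieve parameters against the lattice-point errors in the box count is what pins down $\beta$ --- and the group-theoretic bookkeeping behind the dichotomy in the first paragraph.
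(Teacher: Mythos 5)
Your high-level architecture (reduce ``not a Serre curve'' to $\SL_2$-non-surjectivity at finitely many levels plus the primes, then sieve) matches the paper's, but the execution diverges in two places. The paper's actual proof is a three-line citation: Lemma~\ref{L:Serre curve criterion} (Jones's criterion: $\rho_{E,\ell}(\calG_\QQ)\supseteq\SL_2(\ZZ/\ell\ZZ)$ for all $\ell\geq5$ together with $\rho_{E,72}(\calG_\QQ)\supseteq\SL_2(\ZZ/72\ZZ)$ forces $E$ to be a Serre curve) combined with Proposition~\ref{P:l-adic surjectivity} and Proposition~\ref{P:exceptional prime bound m} at $m=72$. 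Your opening paragraph re-derives the content of Jones's criterion rather than citing it; that is fine in outline. Your treatment of the bounded-level conditions via modular curves is a genuinely different route: the paper sieves the level-$72$ condition by Frobenius conjugacy classes exactly as it does the prime levels. Your route can be made to work, but be careful with the framing: the conditions that actually arise (e.g.\ the index-two subgroups of $\GL_2(\ZZ/4\ZZ)$ corresponding to $\QQ(\sqrt{\Delta_{a,b}})\subseteq\QQ(\mu_4)$, i.e.\ the squarefree part of $4a^3+27b^2$ lying in a fixed finite set) are not ``finitely many $j$-invariants'' nor obviously a cheap one-parameter family --- they are thin subsets of type $2$ in the $(a,b)$-plane (integral points on $c^2=d(4a^3+27b^2)$), and the robust bound here is Serre's thin-set estimate $O(x^{3/2}\log x)$ from \cite{SerreMordellWeil}, which still suffices. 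The paper's uniform sieve at $m=72$ buys you this without any case analysis of subgroups or curves $X_H$.

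The genuine gap is the large-$\ell$ range, which you correctly identify as the technical heart and then leave unresolved. The ``analogues of Mazur's theorem for the split-Cartan and exceptional cases'' you invoke to confine $\ell$ to a bounded range do not exist in the required unconditional form (nothing of the sort is known for the nonsplit Cartan normalizer), and the fallback you sketch --- exact trace-zero conditions --- only bites when $\ell>2\sqrt{p}$ for the sieving primes $p$, leaving an intermediate range $(\log x)^{A}<\ell<x^{1/4}$ where neither the conjugacy-class sieve (whose error term $m^5/p^{1/2}$ in Theorem~\ref{T:Jones} blows up) nor the exact-trace argument applies directly; handling it is the delicate part of Duke's paper and also the source of his ineffective constants (Siegel--Walfisz). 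The paper eliminates this range entirely: Lemma~\ref{L:j bound} gives $h(j(E(a,b)))\ll\log x$ on $B_\QQ(x)$, so the Masser--W\"ustholz theorem (Theorem~\ref{T:MasserWustholz}) forces $\rho_{E(a,b),\ell}(\calG_\QQ)\supseteq\SL_2(\ZZ/\ell\ZZ)$ for every non-CM curve in the box once $\ell\gg(\log x)^{\gamma}$ (Lemma~\ref{L:MasserWustholz}); only $O((\log x)^{\gamma})$ primes remain, each handled by Proposition~\ref{P:nasty sieving} after a pigeonhole choice of determinant class (Lemma~\ref{L:exceptional prime bound 1}), which also keeps every constant effective. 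You should import this step rather than attempting Duke's original large-$\ell$ analysis.
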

\begin{remark}
Theorem~\ref{T:Jones-Serre curves} is a special case of \cite{Jones}*{Theorem~4} and will be proven in \S\ref{SS:Serre curves}.    Unlike Jones' version, the constants in our proof will be effective.
\end{remark}

\subsection{Overview of proof} 
Suppose that $E$ is an elliptic curve over a number field $k\neq \QQ$.  There is an exact sequence
\[
1\to \SL_2(\Zhat) \to \GL_2(\Zhat) \stackrel{\det}{\to} \Zhat^\times \to 1,
\]
and the representation $\det \circ \rho_E\colon \calG_k \to \Zhat^\times$ is the cyclotomic character $\chi_k$ of $k$.  Therefore,
\[
\rho_E(\calG_k) \cap \SL_2(\Zhat)  = \rho_E(\calG_{k^\cyc}).
\]
Thus the equality $\rho_E(\calG_k)=H_k$ is equivalent to $\rho_E(\calG_{k^\cyc}) = \SL_2(\Zhat)$.  A group theoretic argument will show that this in turn is equivalent to having $\rho_{E,m}(\calG_{k^\cyc})=\SL_2(\ZZ/m\ZZ)$ whenever $m$ is equal to $4$, $9$, or a prime $\geq5$.

For a prime $m=\ell\geq 5$, the condition $\rho_{E,\ell}(\calG_{k^\cyc})=\SL_2(\ZZ/\ell\ZZ)$ is equivalent to $\rho_{E,\ell}(\calG_{k})\supseteq \SL_2(\ZZ/\ell\ZZ).$   By considering the Frobenius endomorphism for the reduction of $E$ modulo several primes $\p\subseteq \OO_k$, we can determine which conjugacy classes of $\GL_2(\ZZ/\ell\ZZ)$ meet $\rho_{E,\ell}(\calG_k)$.    Combining this modulo $\p$ information together, we will use the large sieve to give an asymptotic upper bound for the growth of
\[
|\{(a,b) \in B_k(x) : \rho_{E(a,b),\ell}(\calG_k)\not\supseteq \SL_2(\ZZ/\ell\ZZ) \}|
\]
as a function of $x$; see \S\ref{S:exceptional primes}.  To understand the distribution of reductions modulo $\p$, we will use a recent result of Jones; see \S\ref{S:over finite fields}.  Of significant importance is a theorem of Masser and W{\"u}stholz, which is needed to bound the number of $\ell$'s that must be considered.

The conditions at $m=4$ or $9$ are more involved.  In particular, for $m=4$ we will need to impose the condition that $\sqrt{\Delta}$ is not in the cyclotomic extension of $k$ (this avoids the obstruction of \S\ref{SS:rationals} that always occurs for $k=\QQ$).  In \S\ref{S:discriminants}, we again use the large sieve to bound the number of $(a,b)\in B_k(x)$ for which $\sqrt{\Delta_{a,b}}$ (and $\sqrt[3]{\Delta_{a,b}}$ if $\mu_3\subseteq k$) lie in the cyclotomic extension of $k$.

Our main theorems will then be deduced in \S\ref{S:proof}.

\subsection*{Acknowledgments} 
Many thanks to Bjorn Poonen for his careful reading of this paper and helpful comments.   Thanks also to Aaron Greicius and Nathan Jones.

\subsection*{Notation and conventions}   \label{S:notation}
For each field $k$, let $\kbar$ be an algebraic closure of $k$ and let $\calG_k := \Gal(\kbar/k)$ be the absolute Galois group of $k$.  For each integer $n\geq 1$, let $\mu_n$ be the group of $n$-th roots of unity in $\kbar$.   Let $k^\cyc$ (resp.~$k^\ab$) be the cyclotomic (resp.~maximal abelian) extension of $k$ in $\kbar$.   

For a number field $k$, denote its ring of integers by $\OO_k$.  Let $\Sigma_k$ be the set of non-zero prime ideals of $\OO_k$.  For each $\p\in \Sigma_k$, we have a residue field $\FF_\p=\OO_k/\p$; let $N(\p)$ be its cardinality.    Let $\Sigma_k(x)$  be the set of primes $\p$ in $\Sigma_k$ with $N(\p)\leq x$.  Let $\ord_\p\colon k^\times \to \ZZ$ be the surjective discrete valuation corresponding to $\p$.   Denote the absolute discriminant of $k$ by $d_k$.

Fix a group $G$.  Let $G'$ be the derived subgroup of $G$, i.e., the minimal normal subgroup of $G$ for which $G/G'$ is abelian.  Equivalently, $G'$ is the group generated by the set $\{xyx^{-1}y^{-1}:x,y\in G\}$.  The {abelianization} of $G$ is $G^\ab:=G/G'$.  Profinite groups will always be considered with their profinite topologies.

For $a,b$ in a field $k$, define $\Delta_{a,b}= -16(4a^3+27b^2)$.  If $\Delta_{a,b}\neq 0$, then let $E(a,b)$ be the elliptic curve over $k$ defined by the Weierstrass equation $Y^2=X^3+aX+b$.

Suppose that $f$ and $g$ are real valued functions of a real variable $x$.  By $f\ll g$ (or $g \gg f$), we mean that there are positive constants $C_1$ and $C_2$ such that for all $x\geq C_1$, $|f(x)| \leq C_2 |g(x)|$.  We use $O(f)$ to represent an unspecified function $g$ with $g \ll f$.   The dependencies of the implied constants will always be indicated by subscripts.  Also, all implicit constants occurring in this paper are effective.

Finally, the symbols $\ell$ and $p$ will always denote rational primes.

\section{Criterion for maximal Galois action} \label{S:criterion}

 \begin{prop} \label{P:Criterion} 
Let $E$ be an elliptic curve over a number field $k$, and let $\Delta$ be the discriminant of a Weierstrass model of $E$ over $k$.  
Suppose that the following conditions hold:
\begin{alphenum}
\item \label{I:a} $\rho_{E,\ell}(\calG_k)\supseteq \SL_2(\ZZ/\ell\ZZ)$ for every prime $\ell\geq 5$,
\item \label{I:b} $\rho_{E,4}(\calG_k)\supseteq \SL_2(\ZZ/4\ZZ)$ and $\rho_{E,9}(\calG_k)\supseteq \SL_2(\ZZ/9\ZZ)$,
\item \label{I:c} $\sqrt{\Delta} \not\in k^\cyc$,
\item \label{I:d} $\mu_3 \not\subseteq k$ or $\sqrt[3]{\Delta}\not\in k^\cyc$.
\end{alphenum}
Then $\rho_E(\calG_k)=H_k$.
\end{prop}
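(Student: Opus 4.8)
The plan is to reduce the assertion $\rho_E(\calG_k)=H_k$ to the purely group-theoretic statement that a closed subgroup $G\subseteq\SL_2(\Zhat)$ which surjects onto $\SL_2(\ZZ/m\ZZ)$ for every $m\in\{4,9\}\cup\{\ell:\ell\geq 5\text{ prime}\}$ must equal all of $\SL_2(\Zhat)$, together with the observation from the overview that $\rho_E(\calG_k)=H_k$ is equivalent to $\rho_E(\calG_{k^\cyc})=\SL_2(\Zhat)$. Since $\det\circ\rho_E=\chi_k$ and $\chi_k(\calG_k)$ is precisely the image allowed in the definition of $H_k$, we always have $\rho_E(\calG_k)\subseteq H_k$; and intersecting the exact sequence $1\to\SL_2(\Zhat)\to\GL_2(\Zhat)\xrightarrow{\det}\Zhat^\times\to1$ with $\rho_E(\calG_k)$ shows $\rho_E(\calG_k)=H_k$ iff $\rho_E(\calG_k)\cap\SL_2(\Zhat)=\SL_2(\Zhat)$, i.e.\ iff $\rho_E(\calG_{k^\cyc})=\SL_2(\Zhat)$. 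So it remains to upgrade the congruence-level surjectivity (a)–(b) for $\calG_{k^\cyc}$ to full surjectivity.

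First I would record that the Chinese Remainder Theorem decomposes $\SL_2(\Zhat)=\SL_2(\ZZ_2)\times\SL_2(\ZZ_3)\times\prod_{\ell\geq5}\SL_2(\ZZ_\ell)$, so it suffices to treat each $\ell$-adic factor separately. For a prime $\ell\geq5$, the standard fact (Serre) is that a closed subgroup of $\SL_2(\ZZ_\ell)$ surjecting onto $\SL_2(\ZZ/\ell\ZZ)$ is all of $\SL_2(\ZZ_\ell)$: the key point is that $\SL_2(\ZZ/\ell\ZZ)$ has no nontrivial abelian quotient when $\ell\geq 5$ (it is equal to its own derived group), so the pro-$\ell$ ``tower'' obstruction $\mathfrak{sl}_2(\FF_\ell)$ at each level is forced to be hit by a Nakayama/Frattini argument. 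For $\ell=2$ and $\ell=3$ the group $\SL_2(\ZZ/\ell\ZZ)$ does have abelian quotients, which is exactly why the hypotheses in (b) are imposed at $\ell^2$ rather than at $\ell$: the relevant statement is that a closed subgroup of $\SL_2(\ZZ_2)$ surjecting onto $\SL_2(\ZZ/4\ZZ)$ is all of $\SL_2(\ZZ_2)$, and likewise a closed subgroup of $\SL_2(\ZZ_3)$ surjecting onto $\SL_2(\ZZ/9\ZZ)$ is all of $\SL_2(\ZZ_3)$. Here one checks that the kernel of $\SL_2(\ZZ/\ell^2\ZZ)\to\SL_2(\ZZ/\ell\ZZ)$ generates enough to kill the Frattini obstruction at all higher levels. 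I would cite or quickly reprove these as the group-theoretic lemma promised in the overview.

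The one genuine subtlety — and the part I expect to be the main obstacle — is that Conditions (a) and (b) are stated for $\calG_k$, not for $\calG_{k^\cyc}$, whereas the reduction above needs them for $\calG_{k^\cyc}$. For the primes $\ell\geq 5$ this is harmless: as noted in the overview, $\rho_{E,\ell}(\calG_{k^\cyc})=\rho_{E,\ell}(\calG_k)\cap\SL_2(\ZZ/\ell\ZZ)$, and if $\rho_{E,\ell}(\calG_k)\supseteq\SL_2(\ZZ/\ell\ZZ)$ then this intersection is all of $\SL_2(\ZZ/\ell\ZZ)$. But at $\ell=2$ and $\ell=3$ one must ensure that $\rho_{E,4}(\calG_{k^\cyc})=\SL_2(\ZZ/4\ZZ)$ and $\rho_{E,9}(\calG_{k^\cyc})=\SL_2(\ZZ/9\ZZ)$ even though $\calG_{k^\cyc}$ may be a proper subgroup of $\ker(\det\circ\rho_E)$-preimage in a way that interacts with the non-perfect groups $\SL_2(\ZZ/4\ZZ)$, $\SL_2(\ZZ/9\ZZ)$. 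This is precisely where Conditions (c) and (d) enter: $\SL_2(\ZZ/4\ZZ)^\ab$ has order $2$ and the corresponding quadratic subextension of $k(E[4])$ is $k(\sqrt{\Delta})$, while $\SL_2(\ZZ/9\ZZ)^\ab$ has order $3$ with corresponding cubic subextension $k(\sqrt[3]{\Delta})$ (the latter only relevant when $\mu_3\subseteq k$). The argument I would give: let $G_m=\rho_{E,m}(\calG_k)$ for $m=4,9$; since $G_m\supseteq\SL_2(\ZZ/m\ZZ)$ and $\det(G_m)=(\ZZ/m\ZZ)^\times$, we have $G_m=\GL_2(\ZZ/m\ZZ)$, hence $k(E[m])/k$ has Galois group $\GL_2(\ZZ/m\ZZ)$; then $\rho_{E,m}(\calG_{k^\cyc})=G_m\cap\SL_2(\ZZ/m\ZZ)=\SL_2(\ZZ/m\ZZ)$ fails to hold only if $k^\cyc$ already captures part of the $\SL_2(\ZZ/m\ZZ)$-extension, i.e.\ only if the fixed field of $[\SL_2(\ZZ/m\ZZ),\SL_2(\ZZ/m\ZZ)]$ inside $k(E[m])$ — which is the unique degree-$2$ (resp.\ degree-$3$ when $\mu_3\subseteq k$) abelian subextension, namely $k(\sqrt\Delta)$ (resp.\ $k(\sqrt[3]\Delta)$) — lies in $k^\cyc$; Conditions (c) and (d) say exactly that it does not. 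Assembling: for every $m\in\{4,9\}\cup\{\ell\geq5\}$ we get $\rho_{E,m}(\calG_{k^\cyc})=\SL_2(\ZZ/m\ZZ)$, the group-theoretic lemma promotes this to $\rho_E(\calG_{k^\cyc})=\SL_2(\Zhat)$, and the first paragraph converts this to $\rho_E(\calG_k)=H_k$. The delicate verification is the identification of the abelian subextensions of $k(E[4])$ and $k(E[9])$ with the explicit radical extensions by $\sqrt\Delta$ and $\sqrt[3]\Delta$; this is a classical computation with the Weil pairing and the $2$- and $3$-division polynomials, and I would isolate it as its own lemma.
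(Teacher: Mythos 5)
Your overall strategy coincides with the paper's: reduce $\rho_E(\calG_k)=H_k$ to $\rho_E(\calG_{k^\cyc})=\SL_2(\Zhat)$, reduce that to $\rho_{E,m}(\calG_{k^\cyc})=\SL_2(\ZZ/m\ZZ)$ for $m\in\{4,9\}\cup\{\ell\geq 5\}$, and pass from $\calG_k$ to $\calG_{k^\cyc}$ via the chain $\SL_2(\ZZ/m\ZZ)'\subseteq\rho_{E,m}(\calG_k)'\subseteq\rho_{E,m}(\calG_{k^\cyc})\subseteq\SL_2(\ZZ/m\ZZ)$ together with an identification of abelian subextensions of $k(E[m])$. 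But the step you yourself single out as the delicate one contains a genuine error and a genuine omission. At $m=4$ the group theory is wrong: $\SL_2(\ZZ/4\ZZ)^\ab$ is cyclic of order $4$, not $2$ (you have confused it with $\SL_2(\ZZ/2\ZZ)^\ab$; see Lemma~\ref{L:derived subgroup finite}), so the fixed field of $\SL_2(\ZZ/4\ZZ)'$ in $k(E[4])$ is not the quadratic extension $k(\sqrt{\Delta})$ but the cyclic degree-$4$ extension $k(\mu_4,\sqrt[4]{\Delta})$ of $k(\mu_4)$ (using Lang--Trotter's fact that $\sqrt[4]{\Delta}\in k(E[4])$). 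The conclusion survives only because this extension is cyclic, so every nontrivial subextension contains the unique quadratic one $k(\mu_4,\sqrt{\Delta})$, which condition (c) keeps out of $k^\cyc$; that extra step must be supplied. Relatedly, your claim $\det(G_m)=(\ZZ/m\ZZ)^\times$, hence $G_m=\GL_2(\ZZ/m\ZZ)$, is false in general ($\det\circ\rho_{E,m}=\chi_k\bmod m$ need not be surjective, e.g.\ if $\mu_4\subseteq k$), though it is not actually used afterwards.

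The omission is the first alternative of condition (d). When $\mu_3\not\subseteq k$ the degree-$3$ abelian subextension of $k(E[9])/k(\mu_9)$ (the fixed field of $\SL_2(\ZZ/9\ZZ)'$) is still present, and you give no reason why it cannot lie in $k^\cyc$; asserting that this case is ``only relevant when $\mu_3\subseteq k$'' is precisely the content of part (iv) of the paper's Lemma~\ref{L:cyclotomic exceptional}, which you would need to prove. The paper's argument: $\mu_3\not\subseteq k$ forces $\rho_{E,3}(\calG_k)=\GL_2(\ZZ/3\ZZ)$, whose derived subgroup is all of $\SL_2(\ZZ/3\ZZ)$, so $\rho_{E,3}(\calG_{k^\cyc})=\SL_2(\ZZ/3\ZZ)$; since $\SL_2(\ZZ/9\ZZ)^\ab$ has order $3$ and $\SL_2(\ZZ/9\ZZ)'$ does not surject onto $\SL_2(\ZZ/3\ZZ)$, the group $\rho_{E,9}(\calG_{k^\cyc})$ must be all of $\SL_2(\ZZ/9\ZZ)$. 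A smaller point: ``CRT, so it suffices to treat each $\ell$-adic factor separately'' is not automatic for closed subgroups of a product; one needs Goursat's lemma together with the facts that the factors have pairwise coprime abelianizations and disjoint non-abelian simple constituents (the paper's Lemmas~\ref{L:SL Goursat finite} and~\ref{L:SL(Zhat) Goursat}).
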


\begin{remark}
\begin{romanenum}
\item
The image of $\Delta$ in $k^\times/(k^\times)^{12}$ depends only on the isomorphism class of $E/k$.  Thus for a positive integer $r$ dividing $12$, the $r$-th root of $\Delta$,  up to a factor in $\mu_r\cdot k^\times$, is independent of all choices.   In particular, conditions (\ref{I:c}) and (\ref{I:d}) are well-defined.
\item
The Kronecker-Weber theorem says that $\QQ^\cyc=\QQ^\ab$, so condition (\ref{I:c}) \emph{never} holds for $k=\QQ$.
\end{romanenum}
\end{remark}

Since $\det\circ \rho_E \colon \calG_k \to \Zhat^\times$ is the cyclotomic character of $k$, we find that $\rho_E(\calG_{k})=H_k$ if and only if $\rho_E(\calG_{k^\cyc})=\SL_2(\Zhat)$.  Applying Lemma~\ref{L:SL(Zhat) Goursat} to $\rho_E(\calG_{k^\cyc})$, we have $\rho_E(\calG_{k})=H_k$  if and only if $\rho_{E,m}(\calG_{k^\cyc})=\SL_2(\ZZ/m\ZZ)$ holds whenever $m$ is $4$, $9$, or a prime $\geq 5$.  Proposition~\ref{P:Criterion} is then an immediate consequence of the following lemma.

\begin{lemma} \label{L:cyclotomic exceptional}
Let $E$ be an elliptic curve over a number field $k$ with discriminant $\Delta\in k^\times/(k^\times)^{12}$.
\begin{romanenum}
\item  
Let $\ell\geq 5$ be a prime.  If $\rho_{E,\ell}(\calG_k) \supseteq \SL_2(\ZZ/\ell\ZZ)$, then $\rho_{E,\ell}(\calG_{k^\cyc}) = \SL_2(\ZZ/\ell\ZZ)$.
\item    
If $\rho_{E,4}(\calG_k) \supseteq \SL_2(\ZZ/4\ZZ)$ and $\sqrt{\Delta} \not\in k^\cyc$, then $\rho_{E,4}(\calG_{k^\cyc})=\SL_2(\ZZ/4\ZZ)$.
\item    
If $\rho_{E,9}(\calG_k) \supseteq \SL_2(\ZZ/9\ZZ)$ and $\sqrt[3]{\Delta} \not\in k^\cyc$, then $\rho_{E,9}(\calG_{k^\cyc})=\SL_2(\ZZ/9\ZZ)$.
\item    
If $\rho_{E,9}(\calG_k) \supseteq \SL_2(\ZZ/9\ZZ)$ and $\mu_3 \not\subseteq k$, then $\rho_{E,9}(\calG_{k^\cyc})=\SL_2(\ZZ/9\ZZ)$.
\end{romanenum}
\end{lemma}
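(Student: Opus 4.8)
The plan is to exploit the fact that $\calG_{k^\cyc}$ is the kernel of the cyclotomic character $\chi_k \colon \calG_k \to \Zhat^\times$, so that for each $m$ the image $\rho_{E,m}(\calG_{k^\cyc})$ is precisely the intersection of $\rho_{E,m}(\calG_k)$ with the kernel of $\det \colon \GL_2(\ZZ/m\ZZ) \to (\ZZ/m\ZZ)^\times$, that is, with $\SL_2(\ZZ/m\ZZ)$. Under the hypothesis $\rho_{E,m}(\calG_k) \supseteq \SL_2(\ZZ/m\ZZ)$ this forces $\rho_{E,m}(\calG_{k^\cyc}) = \SL_2(\ZZ/m\ZZ) \cap \rho_{E,m}(\calG_k)$, but this is \emph{not} automatically all of $\SL_2(\ZZ/m\ZZ)$: the image $\rho_{E,m}(\calG_k)$ could strictly contain $\SL_2(\ZZ/m\ZZ)$ yet still meet $\det^{-1}(1)$ in a proper subgroup — no, wait, that cannot happen, since $\SL_2(\ZZ/m\ZZ) \subseteq \rho_{E,m}(\calG_k)$ gives $\SL_2(\ZZ/m\ZZ) = \SL_2(\ZZ/m\ZZ) \cap \rho_{E,m}(\calG_k) \subseteq \rho_{E,m}(\calG_{k^\cyc})$ directly. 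So the subtlety is the reverse containment $\rho_{E,m}(\calG_{k^\cyc}) \subseteq \SL_2(\ZZ/m\ZZ)$, equivalently that $\calG_{k^\cyc}$ acts through determinant $1$; this is exactly the statement $\det \circ \rho_{E,m} = \chi_{k,m}$ (the mod-$m$ cyclotomic character), which is the Weil pairing and holds unconditionally. Thus $\rho_{E,m}(\calG_{k^\cyc}) = \SL_2(\ZZ/m\ZZ)$ whenever $\rho_{E,m}(\calG_k) \supseteq \SL_2(\ZZ/m\ZZ)$ and $k^\cyc \cap k(E[m])$ equals $k$ on the ``determinant part.'' More precisely, I would phrase it via the commutative square relating $\rho_{E,m}(\calG_k)$, its determinant image $\chi_{k,m}(\calG_k)$, and the subfield $k(E[m]) \cap k^\cyc$.

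For part (i), with $\ell \geq 5$ prime, the key input is that $\SL_2(\ZZ/\ell\ZZ)$ is \emph{perfect} (it equals its own derived subgroup, since $\PSL_2(\FF_\ell)$ is simple nonabelian for $\ell \geq 5$ and $\SL_2(\FF_\ell)' = \SL_2(\FF_\ell)$). Given $\rho_{E,\ell}(\calG_k) \supseteq \SL_2(\ZZ/\ell\ZZ)$, I want $\rho_{E,\ell}(\calG_{k^\cyc}) = \SL_2(\ZZ/\ell\ZZ)$. Since $k^\cyc/k$ is abelian, $\calG_k / \calG_{k^\cyc}$ is abelian, so $\calG_{k^\cyc}$ contains the closure of the derived subgroup $\calG_k'$; hence $\rho_{E,\ell}(\calG_{k^\cyc}) \supseteq \rho_{E,\ell}(\calG_k') = \rho_{E,\ell}(\calG_k)'$. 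But $\rho_{E,\ell}(\calG_k) \supseteq \SL_2(\ZZ/\ell\ZZ)$ and $\SL_2(\ZZ/\ell\ZZ)$ is normal in $\GL_2(\ZZ/\ell\ZZ)$ with abelian quotient, so $\rho_{E,\ell}(\calG_k)' = \SL_2(\ZZ/\ell\ZZ)$ (it is sandwiched between $\SL_2(\ZZ/\ell\ZZ)' = \SL_2(\ZZ/\ell\ZZ)$ and $\SL_2(\ZZ/\ell\ZZ)$). Combined with the automatic containment $\rho_{E,\ell}(\calG_{k^\cyc}) \subseteq \SL_2(\ZZ/\ell\ZZ)$ from the Weil pairing, we get equality. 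No hypothesis on $\Delta$ is needed here because the perfectness of $\SL_2(\FF_\ell)$ does all the work; this part is easy.

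For parts (ii)--(iv) the group $\SL_2(\ZZ/4\ZZ)$ (resp.\ $\SL_2(\ZZ/9\ZZ)$) is \emph{not} perfect — its abelianization is nontrivial (for $m=4$ the abelianization of $\SL_2(\ZZ/4\ZZ)$ has order $2$, corresponding to the quadratic character; for $m=9$ the relevant abelian quotient of $\SL_2(\ZZ/9\ZZ)$ has order $3$), so the derived-subgroup argument alone loses a bit, and one must rule out that $\calG_{k^\cyc}$ misses a coset. The strategy: let $N = \rho_{E,m}(\calG_{k^\cyc}) \subseteq \SL_2(\ZZ/m\ZZ)$, a normal subgroup of $\rho_{E,m}(\calG_k) \supseteq \SL_2(\ZZ/m\ZZ)$ with $\SL_2(\ZZ/m\ZZ)/N$ abelian (as $\calG_k/\calG_{k^\cyc}$ is abelian) — hence $N \supseteq \SL_2(\ZZ/m\ZZ)'$, so $N$ corresponds to a subgroup of the abelianization $\SL_2(\ZZ/m\ZZ)^\ab$, i.e., to a subfield of a specific abelian extension. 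Concretely, $\SL_2(\ZZ/4\ZZ)^\ab \cong \ZZ/2\ZZ$ and the fixed field of $\ker$ of the corresponding character of $\calG_{k^\cyc}$ is $k^\cyc(\sqrt{\Delta})$ (the unique quadratic subextension of $k^\cyc(E[4])/k^\cyc$ coming from the discriminant); similarly for $m=9$, $\SL_2(\ZZ/9\ZZ)^\ab \cong \ZZ/3\ZZ$ and the relevant cubic extension is $k^\cyc(\sqrt[3]{\Delta})$. So $N = \SL_2(\ZZ/m\ZZ)$ if and only if that quadratic (resp.\ cubic) extension is nontrivial over $k^\cyc$, i.e., $\sqrt{\Delta} \notin k^\cyc$ (resp.\ $\sqrt[3]{\Delta} \notin k^\cyc$) — giving (ii) and (iii). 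For (iv), when $\mu_3 \not\subseteq k$, the extension $k^\cyc(\sqrt[3]{\Delta})/k^\cyc$ is handled differently: since $\mu_3 \subseteq k^\cyc$ always, $k^\cyc(\sqrt[3]{\Delta})/k^\cyc$ is cyclic of degree dividing $3$, and one shows it is nontrivial by a Galois-action argument — if $\sqrt[3]{\Delta} \in k^\cyc$ then $\Delta$ would be a cube in $k^\cyc$; but the constraint $\mu_3 \not\subseteq k$ together with the structure of $k^\cyc$ forces $\Delta$ to already be a cube in $k$ up to the ambiguity in $k^\times/(k^\times)^{12}$, which then feeds back to show the relevant quotient of $\SL_2(\ZZ/9\ZZ)$ is still realized; I'd want to track this carefully, likely by noting that $\Gal(k^\cyc/k)$ acting on $\mu_3 \setminus \{1\}$ nontrivially (since $\mu_3 \not\subseteq k$) means $\sqrt[3]{\Delta}$ and its conjugates cannot all lie in $k^\cyc$ unless $\Delta$ is a genuine cube, and in the latter case the $3$-torsion argument closes the gap.

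\textbf{Main obstacle.} The genuinely delicate point is part (iv): identifying precisely which abelian quotient of $\SL_2(\ZZ/9\ZZ)$ is at stake and matching it with the arithmetic condition involving $\mu_3 \not\subseteq k$ rather than a condition on $\Delta$ directly. One must pin down the unique (up to the relevant symmetry) cubic character of $\SL_2(\ZZ/9\ZZ)$ — equivalently understand $\SL_2(\ZZ/9\ZZ)^\ab$, which requires knowing $\SL_2(\ZZ/9\ZZ)' $ explicitly (using that $\SL_2(\FF_3)$ already has abelianization $\ZZ/3\ZZ$ and the reduction $\SL_2(\ZZ/9\ZZ) \to \SL_2(\ZZ/3\ZZ)$ has a $3$-group kernel) — and then argue that the corresponding extension of $k^\cyc$ is visibly nontrivial when $\mu_3 \not\subseteq k$, using that the cube-root-of-unity twist distinguishes the relevant conjugacy classes. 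I expect the clean way to organize everything is to factor each $\rho_{E,m}$ through its abelianization and reduce all four parts to a statement about which abelian subextensions of $k(E[m])/k$ lie in $k^\cyc$; with that setup (i) is immediate from perfectness, (ii) and (iii) are immediate from the hypotheses, and only (iv) needs the extra twist argument.
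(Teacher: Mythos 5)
Your overall framework --- the sandwich $\SL_2(\ZZ/m\ZZ)' \subseteq \rho_{E,m}(\calG_k)' \subseteq \rho_{E,m}(\calG_{k^\cyc}) \subseteq \SL_2(\ZZ/m\ZZ)$, with the containment $\rho_{E,m}(\calG_{k^\cyc})\subseteq\SL_2$ coming from the Weil pairing and the containment $\rho_{E,m}(\calG_k)'\subseteq\rho_{E,m}(\calG_{k^\cyc})$ from the abelianness of $k^\cyc/k$ --- is exactly the paper's, and your part (i) is complete. But there are two genuine problems. First, $\SL_2(\ZZ/4\ZZ)^\ab$ is cyclic of order $4$, not of order $2$ (by Lemma~\ref{L:derived subgroup finite} its order is $\gcd(m,12)$), so in part (ii) the group $N=\rho_{E,4}(\calG_{k^\cyc})$ could a priori have index $4$ in $\SL_2(\ZZ/4\ZZ)$, and your dichotomy ``$N$ is everything or cuts out the quadratic extension $k^\cyc(\sqrt{\Delta})$'' does not cover that case. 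The repair is either to use cyclicity (any nontrivial subextension of the cyclic quartic abelian subextension of $k(E[4])/k(\mu_4)$ contains its unique quadratic subfield, which must then be identified with $k(\mu_4,\sqrt{\Delta})$ via the mod-$2$ sign character), or, as the paper does, to invoke the Lang--Trotter fact that $\sqrt[4]{\Delta}\in k(E[4])$, so the maximal abelian subextension of $k(E[4])/k(\mu_4)$ is exactly $k(\mu_4,\sqrt[4]{\Delta})$ and any nontrivial piece of it inside $k^\cyc$ forces $\sqrt{\Delta}\in k^\cyc$. In either case the identification of the quadratic subextension with $\sqrt{\Delta}$ needs an argument, not just an assertion.

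Second, and more seriously, part (iv) is not proved, and the route you sketch (trying to deduce $\sqrt[3]{\Delta}\notin k^\cyc$ from $\mu_3\not\subseteq k$) cannot work: no hypothesis on $\Delta$ is available in (iv), and if $\sqrt[3]{\Delta}\in k^\cyc$ then all three cube roots lie in $k^\cyc$ because $\mu_3\subseteq k^\cyc$ always, so the conjugation argument you propose yields nothing. The actual mechanism passes through level $3$: since $\mu_3\not\subseteq k$, the mod-$3$ cyclotomic character is surjective, so $\rho_{E,3}(\calG_k)=\GL_2(\ZZ/3\ZZ)$, and $\GL_2(\ZZ/3\ZZ)'=\SL_2(\ZZ/3\ZZ)$ (even though $\SL_2(\ZZ/3\ZZ)'$ is proper); hence $\rho_{E,3}(\calG_{k^\cyc})=\SL_2(\ZZ/3\ZZ)$. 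Then, because $\SL_2(\ZZ/9\ZZ)^\ab$ has order $3$, the group $\rho_{E,9}(\calG_{k^\cyc})$ is either $\SL_2(\ZZ/9\ZZ)$ or $\SL_2(\ZZ/9\ZZ)'$, and the latter is excluded because reduction modulo $3$ induces an isomorphism $\SL_2(\ZZ/9\ZZ)^\ab\cong\SL_2(\ZZ/3\ZZ)^\ab$, so $\SL_2(\ZZ/9\ZZ)'$ reduces into the proper subgroup $\SL_2(\ZZ/3\ZZ)'$, contradicting the previous sentence. This interplay between levels $3$ and $9$ is the missing idea you flagged as the ``main obstacle.''
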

\begin{proof}
Let $m$ be a positive integer such that $\rho_{E,m}(\calG_k)\supseteq \SL_2(\ZZ/m\ZZ)$.  Since $k^\cyc$ is an abelian extension of $k$, we have inclusions
\begin{equation} \label{E:cyclotomic inclusions}
\SL_2(\ZZ/m\ZZ)' \subseteq \rho_{E,m}(\calG_{k})' \subseteq \rho_{E,m}(\calG_{k^\cyc}) \subseteq \SL_2(\ZZ/m\ZZ).
\end{equation}
\noindent (i) Suppose that $m=\ell\geq 5$ is prime.   By Lemma~\ref{L:derived subgroup finite} we have $\SL_2(\ZZ/\ell\ZZ)'=\SL_2(\ZZ/\ell\ZZ)$, so from (\ref{E:cyclotomic inclusions}) we deduce that $\rho_{E,\ell}(\calG_{k^\cyc}) = \SL_2(\ZZ/\ell\ZZ)$.  

\noindent (ii)  Our assumption $\rho_{E,4}(\calG_k) \supseteq \SL_2(\ZZ/4\ZZ)$ implies that $\rho_{E,4}(\calG_{k(\mu_4)}) = \SL_2(\ZZ/4\ZZ)$.  Thus to prove $\rho_{E,4}(\calG_{k^\cyc}) = \SL_2(\ZZ/4\ZZ)$, it suffices to show that $k(E[4])\cap k^\cyc = k(\mu_4)$.  

In \cite{Lang-Trotter}*{Part III \S11}, it is shown that $\sqrt[4]{\Delta}$ is an element of $k(E[4])$.  Using $\sqrt{\Delta} \not\in k(\mu_4)$, one finds that $k(\mu_4, \sqrt[4]{\Delta})\subseteq k(E[4])$ is an abelian extension of $k(\mu_4)$ of degree $4$.  By Lemma~\ref{L:derived subgroup finite} the group $\SL_2(\ZZ/4\ZZ)^\ab$ is cyclic of order $4$, so $k(E[4]) \cap k(\mu_4)^\ab = k(\mu_4, \sqrt[4]{\Delta})$.  Therefore
\begin{equation} \label{E:4 cyclotomic}
k(E[4]) \cap k^\cyc = (k(E[4])\cap k(\mu_4)^\ab) \cap k^\cyc = k(\mu_4, \sqrt[4]{\Delta}) \cap k^\cyc = k(\mu_4),
\end{equation}
where the last equality uses $\sqrt{\Delta} \not\in k^\cyc$.

\noindent (iii)  The assumption $\rho_{E,9}(\calG_k)\supseteq \SL_2(\ZZ/9\ZZ)$ implies that $\rho_{E,9}(\calG_{k(\mu_9)})= \SL_2(\ZZ/9\ZZ)$.  By Lemma~\ref{L:derived subgroup finite}, the group $\SL_2(\ZZ/9\ZZ)^\ab$ has order $3$.  

Note that $\sqrt[3]{\Delta}$ is an element of $k(E[3])$ (see \cite{Adelmann}*{Proposition 5.4.3} for example).  Arguing as in part (ii), we find that $k(E[9]) \cap k(\mu_9)^\ab = k(\mu_9, \sqrt[3]{\Delta}).$   Since $\sqrt[3]{\Delta} \not\in k^\cyc$, we deduce that $k(E[9]) \cap k^\cyc=k(\mu_9)$, and hence $\rho_{E,9}(\calG_{k^\cyc})=\SL_2(\ZZ/9\ZZ)$.

\noindent (iv) The assumptions imply that $\rho_{E,3}(\calG_k)=\GL_2(\ZZ/3\ZZ)$.  One readily checks that $\GL_2(\ZZ/3\ZZ)'=\SL_2(\ZZ/3\ZZ)$, and thus $\rho_{E,3}(\calG_k)'=\SL_2(\ZZ/3\ZZ)$.   Using (\ref{E:cyclotomic inclusions}), with $m=3$, gives $\rho_{E,3}(\calG_{k^\cyc})=\SL_2(\ZZ/3\ZZ)$.

By Lemma~\ref{L:derived subgroup finite}, the group $\SL_2(\ZZ/9\ZZ)^\ab$ has order $3$.  So from (\ref{E:cyclotomic inclusions}), with $m=9$, we find that $\rho_{E,9}(\calG_{k^\cyc})$ is either $\SL_2(\ZZ/9\ZZ)'$ or $\SL_2(\ZZ/9\ZZ)$.  If $\rho_{E,9}(\calG_{k^\cyc})=\SL_2(\ZZ/9\ZZ)'$, then Lemma~\ref{L:derived subgroup finite} implies that $\rho_{E,3}(\calG_{k^\cyc})\neq \SL_2(\ZZ/3\ZZ)$.  Therefore,  $\rho_{E,9}(\calG_{k^\cyc})=\SL_2(\ZZ/9\ZZ)$.
\end{proof}

We now state a criterion that applies to $k=\QQ$.
\begin{lemma}[Jones]  \label{L:Serre curve criterion}
Let $E$ be an elliptic curve over $\QQ$ which satisfies the following properties:
\begin{alphenum}
\item $\rho_{E,\ell}(\calG_\QQ)\supseteq \SL_2(\ZZ/\ell\ZZ)$ for every prime $\ell\geq 5$,
\item $\rho_{E,72}(\calG_\QQ)\supseteq \SL_2(\ZZ/72\ZZ)$.
\end{alphenum}
Then $E$ is a Serre curve.
\end{lemma}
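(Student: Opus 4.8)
The plan is to identify $\rho_E(\calG_\QQ)$ precisely. Let $\varepsilon\colon\GL_2(\Zhat)\to\{\pm1\}$ be the character $\varepsilon(A)=\operatorname{sgn}(A\bmod 2)$, where $\GL_2(\ZZ/2\ZZ)$ is identified with the symmetric group on the three nonzero points of $E[2]$ and $\operatorname{sgn}$ is the signature; and let $\chi_\Delta\colon\Zhat^\times\to\{\pm1\}$ be the quadratic character through which $\calG_\QQ$ acts on $\QQ(\sqrt{\Delta})$ (it factors through $(\ZZ/D\ZZ)^\times$, where $D$ is the discriminant of $\QQ(\sqrt{\Delta})$, and $\sigma|_{\QQ(\sqrt{\Delta})}=\chi_\Delta(\chi_\QQ(\sigma))$). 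Since $\QQ(\sqrt{\Delta})\subseteq\QQ(E[2])$ with $\sigma|_{\QQ(\sqrt{\Delta})}=\varepsilon(\rho_{E,2}(\sigma))$, one has $\rho_E(\calG_\QQ)\subseteq H_E:=\{A\in\GL_2(\Zhat):\varepsilon(A)=\chi_\Delta(\det A)\}$ --- this is the obstruction recalled in \S\ref{SS:rationals} --- and $H_E$ has index $2$ in $\GL_2(\Zhat)$ because the characters $\varepsilon$ and $\chi_\Delta\circ\det$ are distinct, the latter being trivial on $\SL_2(\Zhat)$ and the former not. So it suffices to prove $\rho_E(\calG_\QQ)=H_E$. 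As in \S\ref{S:criterion}, $\det\circ\rho_E=\chi_\QQ$ is surjective onto $\Zhat^\times$ with fixed field $\QQ^\cyc$, whence $\rho_E(\calG_{\QQ^\cyc})=\rho_E(\calG_\QQ)\cap\SL_2(\Zhat)$ and $\rho_E(\calG_\QQ)=H_E$ is equivalent to $\rho_E(\calG_{\QQ^\cyc})=S$, where $S:=H_E\cap\SL_2(\Zhat)=\ker\!\big(\varepsilon|_{\SL_2(\Zhat)}\big)$. Writing $S_2:=\ker\!\big(\varepsilon|_{\SL_2(\ZZ_2)}\big)$, an index-$2$ subgroup of $\SL_2(\ZZ_2)$, one has the direct product decomposition $S=S_2\times\prod_{\ell\text{ odd}}\SL_2(\ZZ_\ell)$, and a priori $\rho_E(\calG_{\QQ^\cyc})\subseteq S$.

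First I would unwind the hypotheses. Since $\rho_{E,\ell}(\calG_\QQ)\supseteq\SL_2(\ZZ/\ell\ZZ)$ has surjective determinant, $\rho_{E,\ell}(\calG_\QQ)=\GL_2(\ZZ/\ell\ZZ)$ for all $\ell\geq 5$; likewise $\rho_{E,72}(\calG_\QQ)=\GL_2(\ZZ/72\ZZ)$, so $\rho_{E,8}(\calG_\QQ)=\GL_2(\ZZ/8\ZZ)$, $\rho_{E,9}(\calG_\QQ)=\GL_2(\ZZ/9\ZZ)$, and moreover $\QQ(E[8])$ and $\QQ(E[9])$ are linearly disjoint over $\QQ$. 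For the odd primes the image over $\QQ^\cyc$ is now immediate: Lemma~\ref{L:cyclotomic exceptional}(i) gives $\rho_{E,\ell}(\calG_{\QQ^\cyc})=\SL_2(\ZZ/\ell\ZZ)$ for $\ell\geq 5$, and Lemma~\ref{L:cyclotomic exceptional}(iv) (applicable since $\mu_3\not\subseteq\QQ$) gives $\rho_{E,9}(\calG_{\QQ^\cyc})=\SL_2(\ZZ/9\ZZ)$.

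The crux is the prime $2$, which is where the unavoidable entanglement $\QQ(\sqrt{\Delta})\subseteq\QQ^\cyc$ (Kronecker--Weber) sits. Here I would argue as in the proof of Lemma~\ref{L:cyclotomic exceptional}: because $\QQ^\cyc/\QQ$ is abelian, the chain (\ref{E:cyclotomic inclusions}) with $m=8$, together with $\rho_{E,8}(\calG_\QQ)=\GL_2(\ZZ/8\ZZ)$, gives $\GL_2(\ZZ/8\ZZ)'\subseteq\rho_{E,8}(\calG_{\QQ^\cyc})$. On the other hand $\rho_{E,8}(\calG_{\QQ^\cyc})\subseteq\ker(\varepsilon|_{\SL_2(\ZZ/8\ZZ)})$ (the reduction of $S_2$ modulo $8$), since $\rho_E(\calG_{\QQ^\cyc})\subseteq S$; and $\GL_2(\ZZ/8\ZZ)'=\ker(\varepsilon|_{\SL_2(\ZZ/8\ZZ)})$, which follows from Lemma~\ref{L:derived subgroup finite}. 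Therefore $\rho_{E,8}(\calG_{\QQ^\cyc})=\ker(\varepsilon|_{\SL_2(\ZZ/8\ZZ)})$ --- precisely $S_2$ reduced modulo $8$ --- and the index-$2$ ``defect'' of this group inside $\SL_2(\ZZ/8\ZZ)$ records exactly the entanglement $\QQ(\sqrt{\Delta})\subseteq\QQ(E[2])\cap\QQ^\cyc$.

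It remains to assemble these local facts into $\rho_E(\calG_{\QQ^\cyc})=S$. Because $\QQ(E[8])$ and $\QQ(E[9])$ are linearly disjoint over $\QQ$, the maximal abelian subextension of $\QQ(E[72])/\QQ$ is the compositum of those of $\QQ(E[8])/\QQ$ and $\QQ(E[9])/\QQ$, and hence $\rho_{E,72}(\calG_{\QQ^\cyc})=\rho_{E,8}(\calG_{\QQ^\cyc})\times\rho_{E,9}(\calG_{\QQ^\cyc})$, which is the reduction of $S$ modulo $72$; this is the step that rules out any residual entanglement between the primes $2$ and $3$, whose contributions $S_2$ and $\SL_2(\ZZ_3)$ to $S$ each admit a quotient of order $3$. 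A Goursat-type argument in the spirit of Lemma~\ref{L:SL(Zhat) Goursat} --- using that $\SL_2(\ZZ_\ell)$ is topologically perfect for $\ell\geq 5$ by Lemma~\ref{L:derived subgroup finite} (so contributes no entanglement) and that the $2$- and $3$-parts have been controlled jointly at level $72$ --- then shows that a closed subgroup $N$ of $S$ whose reduction modulo $72$ equals $S\bmod 72$ and whose reduction modulo each prime $\ell\geq 5$ equals $\SL_2(\ZZ/\ell\ZZ)$ must be all of $S$. Applying this with $N=\rho_E(\calG_{\QQ^\cyc})$ yields $\rho_E(\calG_{\QQ^\cyc})=S$, hence $\rho_E(\calG_\QQ)=H_E$, so $[\GL_2(\Zhat):\rho_E(\calG_\QQ)]=2$ and $E$ is a Serre curve. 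I expect the main obstacle to be the analysis at $\ell=2$: verifying that the single forced entanglement $\QQ(\sqrt{\Delta})\subseteq\QQ^\cyc$ is the \emph{only} loss of maximality (so that the $2$-adic image over $\QQ^\cyc$ is exactly $S_2$, of index $2$ in $\SL_2(\ZZ_2)$, neither larger nor smaller) and checking that hypothesis (b) --- through surjectivity at the composite level $72$ rather than merely at $8$ and $9$ separately --- is precisely what is needed to eliminate the potential $2$--$3$ entanglement and to carry out the lifting, which is where the abelianization computations of Lemma~\ref{L:derived subgroup finite} and the identity $\GL_2(\ZZ/8\ZZ)'=\ker(\varepsilon|_{\SL_2(\ZZ/8\ZZ)})$ enter.
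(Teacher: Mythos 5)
Your proposal reconstructs the content of \cite{Jones}*{Lemma~5}, which is exactly what the paper does \emph{not} do: Zywina's proof consists of observing that $\rho_{E,m}(\calG_\QQ)\supseteq \SL_2(\ZZ/m\ZZ)$ forces $\rho_{E,m}(\calG_\QQ)=\GL_2(\ZZ/m\ZZ)$ (since $\det\circ\rho_{E,m}$ is the surjective mod-$m$ cyclotomic character) and then citing Jones for the rest. So you are supplying an argument the paper outsources, and your architecture is the standard and correct one: identify the index-$2$ group $H_E$ cut out by the entanglement $\varepsilon=\chi_\Delta\circ\det$, reduce to showing $\rho_E(\calG_{\QQ^\cyc})=\ker\big(\varepsilon|_{\SL_2(\Zhat)}\big)$, dispatch the odd primes with Lemma~\ref{L:cyclotomic exceptional}(i),(iv), sandwich the $2$-part between $\GL_2(\ZZ/8\ZZ)'$ and $\ker\big(\varepsilon|_{\SL_2(\ZZ/8\ZZ)}\big)$ using (\ref{E:cyclotomic inclusions}), and reassemble via Goursat. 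You have also correctly identified why hypothesis (b) is stated at the composite level $72$: by Kronecker--Weber the maximal abelian subextensions coincide with the intersections with $\QQ^\cyc$, so surjectivity mod $72$ (not merely mod $8$ and mod $9$) kills the potential $2$--$3$ entanglement through the order-$2$ and order-$3$ abelian quotients.

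Two steps are under-justified, and the first is the crux of the whole lemma. The identity $\GL_2(\ZZ/8\ZZ)'=\ker\big(\varepsilon|_{\SL_2(\ZZ/8\ZZ)}\big)$ does \emph{not} follow from Lemma~\ref{L:derived subgroup finite}, which only gives $\SL_2(\ZZ/8\ZZ)^\ab\cong\ZZ/4\ZZ$ and hence $[\SL_2(\ZZ/8\ZZ):\GL_2(\ZZ/8\ZZ)']\leq 4$, with the index even because $\varepsilon$ is trivial on $\GL_2(\ZZ/8\ZZ)'$ but not on $\SL_2(\ZZ/8\ZZ)$. To rule out index $4$ --- i.e., to show the Serre obstruction is the \emph{only} loss of maximality at $2$ --- you must compute the conjugation action of $\GL_2(\ZZ/8\ZZ)$ on $\SL_2(\ZZ/8\ZZ)^\ab$: the class of $T=\left(\begin{smallmatrix}1&1\\0&1\end{smallmatrix}\right)$ generates the abelianization, conjugation by $\left(\begin{smallmatrix}1&0\\0&3\end{smallmatrix}\right)$ sends $T$ to $T^{3}$ and hence acts by $-1$ on $\ZZ/4\ZZ$, so the coinvariants (which compute $\SL_2(\ZZ/8\ZZ)/\GL_2(\ZZ/8\ZZ)'$) have order $2$. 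Without this, your sandwich only pins $\rho_{E,8}(\calG_{\QQ^\cyc})$ between a group of index $4$ and one of index $2$, and the proof stalls. Second, your final assembly needs a lifting statement for the index-$2$ subgroup $S_2\subseteq\SL_2(\ZZ_2)$ (image mod $8$ equal to $S_2\bmod 8$ implies image equal to $S_2$); this is not literally Lemma~\ref{L:reduce to finite groups}(ii), though it is proved by the same $\ell$-th power trick because $S_2$ contains the kernel of reduction mod $2$. Both points are true and fixable, so the proof is correct once they are filled in.
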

\begin{proof}
For each $m\geq 1$, we have $\rho_{E,m}(\calG_\QQ)=\GL_2(\ZZ/m\ZZ)$ if and only if $\rho_{E,m}(\calG_\QQ)\supseteq \SL_2(\ZZ/m\ZZ)$.
The lemma is now a special case of  \cite{Jones}*{Lemma~5}. 
\end{proof}

\section{Elliptic curve over finite fields} \label{S:over finite fields}
Fix a positive integer $m$ and a prime $p\nmid m$.  Let $E$ be an elliptic curve over the field $\FF_p$.  As before, one has a Galois representation $\rho_{E,m}\colon \Gal({\FFbar}_p/\FF_p) \to \GL_2(\ZZ/m\ZZ)$, which arises from the Galois action on the $m$-torsion of $E$.  

Let $\Frob_p\in \Gal(\FFbar_p/\FF_p)$ be the $p$-th power Frobenius automorphism.  For a subset $C$ of $\GL_2(\ZZ/m\ZZ)$ stable under conjugation, define the set
\[
\Omega_C(p) := \big\{(r,s) \in \FF_p^2 : \Delta_{r,s} \neq 0, \, \rho_{E(r,s),m}(\Frob_p) \in C \big\}.
\]
The following theorem gives a good estimate on the cardinality of this set.

\begin{thm}[Jones] \label{T:Jones} Fix a positive integer $m$ and a conjugacy class $C$ of $\GL_2(\ZZ/m\ZZ)$.  Let $d$ be the element of $(\ZZ/m\ZZ)^\times$ such that $\det(C)=\{d\}$.  Then for all primes $p$ with $p\equiv d \bmod{m}$,
\[
\frac{|\Omega_C(p)|}{p^2} = \frac{|C| }{|\SL_2(\ZZ/m\ZZ)|}  + O\Bigg(\frac{m^5}{p^{1/2}}\Bigg).
\]
\end{thm}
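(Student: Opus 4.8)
The plan is to recognise $|\Omega_C(p)|$ as a count of $\FF_p$-rational points of a smooth affine surface subject to a Frobenius splitting condition, and then to apply the Weil bounds (an effective function-field Chebotarev density theorem) to the relevant \'etale cover. Put $U = \{(r,s)\in\AA^2 : \Delta_{r,s}\neq 0\}$, a geometrically irreducible smooth affine surface over $\FF_p$; since $\Delta_{r,s}=0$ cuts out a cuspidal cubic, which for $p>3$ is isomorphic over $\FF_p$ to $y^2=x^3$ and so has exactly $p$ points, we get $|U(\FF_p)| = p^2 - p$. Over $U$ sits the universal Weierstrass curve $\mathcal E\to U$, and as $p\nmid m$ (which is forced by $p\equiv d\bmod m$ with $d\in(\ZZ/m\ZZ)^\times$) the $m$-torsion $\mathcal E[m]$ is a finite \'etale group scheme; let $\pi\colon Y\to U$ be the $\GL_2(\ZZ/m\ZZ)$-torsor of bases of $\mathcal E[m]$. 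By construction, for $u=(r,s)\in U(\FF_p)$ the Frobenius conjugacy class $\Frob_u\subseteq\GL_2(\ZZ/m\ZZ)$ attached to $\pi$ is exactly the conjugacy class of $\rho_{E(r,s),m}(\Frob_p)$, so
\[
|\Omega_C(p)| = \#\{u\in U(\FF_p) : \Frob_u\subseteq C\}.
\]

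The next step is to identify the monodromy of $\pi$. The Weil pairing identifies $\wedge^2\mathcal E[m]$ with the constant sheaf $\mu_m$ over $U_{\FFbar_p}$, so the geometric monodromy group lies in $\SL_2(\ZZ/m\ZZ)$; on the other hand it equals all of $\SL_2(\ZZ/m\ZZ)$ --- the classical geometric irreducibility of the modular curves of level $m$, valid for every $p\nmid m$ (concretely, the local Picard--Lefschetz monodromy around the bad-reduction divisor $\{\Delta=0\}$ supplies transvections, which generate $\SL_2(\ZZ/m\ZZ)$, and the loop in the fibre of $U\to\AA^1_j$ supplies $-I$). Hence $Y_{\FFbar_p}$ has exactly $\varphi(m)$ geometric components, permuted by $\Frob_p$ through the determinant, i.e.\ by the element $p\bmod m$ of $(\ZZ/m\ZZ)^\times$. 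This is precisely where the hypothesis $p\equiv d\bmod m$ is used: it guarantees that $C$, which has $\det C=\{d\}$, is contained in the unique $\SL_2(\ZZ/m\ZZ)$-coset supporting all of the $\Frob_u$, namely $\{A\in\GL_2(\ZZ/m\ZZ):\det A=d\}$, a set of cardinality $|\SL_2(\ZZ/m\ZZ)|$.

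Now I would invoke an effective function-field Chebotarev density theorem for the torsor $\pi$ --- a consequence of the Grothendieck--Lefschetz trace formula together with Deligne's weight bounds (the Riemann hypothesis for curves over finite fields) --- to obtain
\[
\#\{u\in U(\FF_p):\Frob_u\subseteq C\} = \frac{|C|}{|\SL_2(\ZZ/m\ZZ)|}\,|U(\FF_p)| + O\big(M_m\, p^{3/2}\big),
\]
where $M_m$ is a purely geometric quantity governed by the degree $|\GL_2(\ZZ/m\ZZ)|\ll m^4$ of the cover and by the compactly-supported $\ell$-adic Betti numbers of $Y$ (equivalently, the genera and cusp-counts of the level-$m$ modular curves, which are $\ll m^3$). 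These are polynomial in $m$, and carrying out the bookkeeping one checks $M_m\ll m^5$. Combining with $|U(\FF_p)|=p^2-p$, the bound $|C|/|\SL_2(\ZZ/m\ZZ)|\le 1$, and dividing by $p^2$ yields the asserted estimate.

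The hard part is this last step: extracting the \emph{explicit} error exponent needs uniform-in-$m$ control of the \'etale cohomology of the (possibly singular, disconnected, non-proper) surfaces $Y$, along with careful handling of the non-properness of $U$ and of the loci $j\in\{0,1728\}$ where $\mathcal E$ has extra automorphisms --- these loci affect only the $O(p)$ term and do not disturb the \'etaleness of $\pi$, because $U$ parametrises Weierstrass equations rather than isomorphism classes, but this has to be checked. Getting the \emph{constant} right --- $|\SL_2(\ZZ/m\ZZ)|$ in the denominator, not $|\GL_2(\ZZ/m\ZZ)|$ --- is the conceptual crux, and it falls out cleanly once the geometric monodromy is identified as $\SL_2(\ZZ/m\ZZ)$ and the congruence $p\equiv d\bmod m$ is invoked. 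A more hands-on alternative avoids the modular-curve machinery: first reduce, at the cost of a factor $\tfrac{p-1}{2}$ and an $O(p)$ error from the loci $j\in\{0,1728\}$ and from pairing each curve with its quadratic twist (whose Frobenius class is $-1$ times that of the original), to estimating $\#\{E/\FF_p \text{ up to isomorphism}:\rho_{E,m}(\Frob_p)\in C\}$; then express this in terms of the Frobenius element $\pi_E\in\End(E)$ modulo $m$ and apply Deuring's theorem on the number of elliptic curves over $\FF_p$ with prescribed endomorphism ring, summed against the Eichler--Selberg / Hurwitz class-number relation. This is more elementary but makes the uniformity in $m$ considerably more delicate to track.
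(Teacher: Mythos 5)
The paper does not actually prove this theorem: its ``proof'' is a citation of Theorems~7 and~8 of Jones's paper, whose method is precisely the \emph{second}, ``more hands-on'' route you sketch at the end --- fibering the count over isomorphism classes of $E/\FF_p$ (with the factor $(p-1)/2$ from Weierstrass models of a fixed curve and the $O(p)$ correction at $j\in\{0,1728\}$), invoking Deuring's theorem to express the number of curves with prescribed Frobenius as a Hurwitz class number, and then using the generalization of Hurwitz's class number relations from \cite{Jones-Trace} to evaluate the resulting sums with the congruence condition mod $m$. So your ``alternative'' is the actual proof, and your primary route (the $\GL_2(\ZZ/m\ZZ)$-torsor of level structures on the universal Weierstrass curve over $U$, Igusa's theorem that the geometric monodromy is all of $\SL_2(\ZZ/m\ZZ)$ for $p\nmid m$, the determinant--cyclotomic-character computation explaining both the hypothesis $p\equiv d\bmod m$ and the denominator $|\SL_2(\ZZ/m\ZZ)|$, and an effective function-field Chebotarev estimate via the trace formula and Deligne's weight bounds) is a genuinely different, more conceptual argument of Lang--Weil-with-monodromy type. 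Both are viable; the monodromy route makes the main term transparent but shifts all the difficulty into the uniform-in-$m$ bound on the compactly supported Betti numbers of the level cover, which you correctly flag as the real work and only assert (``bookkeeping gives $M_m\ll m^5$'') rather than carry out --- this is the one substantive gap if your sketch were to stand alone, whereas the class-number route trades geometry for the analytic uniformity of the Eichler--Selberg/Hurwitz sums. Your identification of where $p\equiv d\bmod m$ enters, and of why the denominator is $|\SL_2|$ rather than $|\GL_2|$, matches the structure of the cited result exactly.
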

\begin{proof}
This follows from Theorem~8 (and Theorem~7) of \cite{Jones}.  A key ingredient is a generalization of results of Hurwitz, see \cite{Jones-Trace}.
\end{proof}

\section{The large sieve}
Let $K$ be a number field, $\Lambda$ a free $\OO_K$-module of rank $n$, and $\norm{\cdot}$ a norm on $\Lambda_\RR = \RR\otimes_\ZZ \Lambda$.  Fix a subset $Y$ of $\Lambda$.   Let $x\geq1$ and $Q>0$ be real numbers.  For every prime ideal $\p\in\Sigma_K$, let $\omega_\p$ be a real number in the interval $[0,1)$.  Assume the following conditions hold:
\begin{enumerate}
\item The set $Y$ is contained in a ball of radius $x$; i.e.,~there is an $a_0\in \Lambda_\RR$ such that  $\norm{a-a_0}\leq x$ for all $a\in Y$.
\item  For every $\p\in \Sigma_K(Q)$, the image $Y_\p$ of $Y$ in $\Lambda/\p\Lambda$ by reduction modulo $\p$ satisfies
\[
|Y_\p| \leq (1-\omega_\p)|\Lambda/\p\Lambda|.
\]
\end{enumerate}
\begin{thm}[Large sieve, \cite{SerreMordellWeil}*{\S12.1}] \label{T:large sieve}
With assumptions as above, we have
\[
|Y| \ll_{K,\Lambda, \norm{\cdot}} \frac{ x^{[K:\QQ]n} + Q^{2n}}{L(Q)}
\]
where 
\[
L(Q):=\sum_{ \substack{\aA\subseteq \OO_K \text{squarefree} \\ N(\aA)\leq Q}} \prod_{\p|\aA} \frac{\omega_\p}{1-\omega_\p}.
\]
\end{thm}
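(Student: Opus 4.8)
The plan is to deduce this from the classical large-sieve machinery, adapted to the number field $K$, following the familiar two-step pattern: an analytic (harmonic-analytic) large-sieve inequality, followed by a duality argument that turns the sieving densities $\omega_\p$ into the lower bound $L(Q)$. Throughout, set $d := [K:\QQ]\,n$, fix the standard archimedean embedding $\Lambda \hookrightarrow \Lambda_\RR \cong \RR^{d}$, and (using hypothesis~(1), after translating) assume $Y$ lies in a box of side $O(x)$. For a character $\chi$ of $\Lambda/\aA\Lambda$ write $S(\chi) := \sum_{a\in Y}\chi(a)$.

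The analytic ingredient I would invoke is the $d$-dimensional additive large sieve: for points $\xi_1, \dots, \xi_R \in \RR^{d}$ that are $\delta$-separated in the sup norm and any complex coefficients $(c_a)_{a \in Y}$ (here $e(t)=e^{2\pi i t}$),
\[
\sum_{r=1}^{R} \Bigl| \sum_{a \in Y} c_a\, e(\langle \xi_r, a \rangle) \Bigr|^2 \ll_{d} (x + \delta^{-1})^{d} \sum_{a \in Y} |c_a|^2 .
\]
This is proved exactly as over $\ZZ$: apply the one-variable Montgomery--Vaughan inequality (equivalently, a Beurling--Selberg majorant) in each of the $d$ coordinates and take the product; Gallagher's method is an alternative route.

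Next I would select the test points. For each squarefree ideal $\aA \subseteq \OO_K$ with $N(\aA) \le Q$, realize the Pontryagin dual of $\Lambda/\aA\Lambda$ --- via the trace form, which is where the different of $K$ enters, contributing only a constant depending on $K$ --- as a finite set of rational points of $\Lambda_\RR$ with ``denominator'' $\aA$; letting $\aA$ vary and discarding the trivial character yields the family $\{\xi_r\}$. The crucial spacing estimate is this: if $\xi \ne \xi'$ come from $\aA$ and $\aB$, then $\xi - \xi'$ is a nonzero point whose denominator ideal divides $\aA\aB$, hence has norm $\le Q^2$; applying the arithmetic--geometric-mean inequality to the field norm shows any such nonzero point has sup norm $\gg_K Q^{-2/[K:\QQ]}$. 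Thus the family is $\delta$-separated with $\delta = Q^{-2/[K:\QQ]}$, so $\delta^{-d} = Q^{2n}$, and applying the displayed inequality with $c_a \equiv 1$ gives
\[
\sum_{\substack{\aA \text{ squarefree}\\ N(\aA)\le Q}}\ \sum_{\chi \text{ primitive mod } \aA} |S(\chi)|^2 \ \ll_{K,\Lambda,\norm{\cdot}}\ \big(x^{d} + Q^{2n}\big)\,|Y| .
\]

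Finally I would bound that same double sum from below using hypothesis~(2). For a single prime $\p$: if the reduction of $Y$ lands in a subset of $\Lambda/\p\Lambda$ of cardinality $(1-\omega_\p)|\Lambda/\p\Lambda|$, then expanding $|Y|^2$ in characters and applying Cauchy--Schwarz gives $\sum_{\chi\ne 1}|S(\chi)|^2 \ge \tfrac{\omega_\p}{1-\omega_\p}\,|Y|^2$, the sum being over nontrivial characters modulo $\p$. By the Chinese remainder theorem, characters modulo a squarefree $\aA$ factor over the primes dividing $\aA$, and primitivity means nontriviality at each such $\p$; multiplying the one-prime bounds and summing over squarefree $\aA$ with $N(\aA)\le Q$ produces $|Y|^2\,L(Q)$. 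Comparing this with the upper bound above gives $|Y|^2 L(Q) \ll_{K,\Lambda,\norm{\cdot}} (x^{d}+Q^{2n})\,|Y|$, which is the stated inequality. I expect the real obstacle to be the analytic step together with the spacing bookkeeping: correctly setting up the rational test points over $\OO_K$ (handling the trace form and the different) and verifying that differences of points with denominators of norm $\le Q$ are separated by $\gg Q^{-2/[K:\QQ]}$, so that $\delta^{-d}$ comes out exactly as $Q^{2n}$ and not a larger power of $Q$; the multiplicative ``arithmetic'' half is then the standard classical computation.
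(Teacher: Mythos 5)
The paper gives no proof of this theorem; it simply quotes it from Serre's \emph{Lectures on the Mordell--Weil theorem}, \S12.1, and your sketch is a faithful outline of exactly the argument given there (the $[K:\QQ]n$-dimensional analytic large-sieve inequality for well-spaced points, the trace-form/different bookkeeping and AM--GM spacing bound yielding $\delta^{-d}=Q^{2n}$, and the standard single-prime Cauchy--Schwarz bound made multiplicative over squarefree moduli). So your proposal is correct and takes essentially the same route as the cited source.
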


\begin{remark}
We will apply the large sieve with $\Lambda=\OO_k^2$ and $\norm{\cdot}$ our fixed norm on $\RR\otimes_\ZZ \OO_k^2$.  In \S\ref{S:exceptional primes} and \S\ref{S:discriminants}, we will take $K$ to be $k$ and $\QQ$, respectively.
\end{remark}

\section{Most elliptic curves have large $\ell$-adic Galois images} \label{S:exceptional primes}
Throughout this section, fix a number field $k$.  

\begin{definition}
For each positive integer $m$, define the set 
\[
B_{k,m}(x):=\{ (a,b) \in B_k(x):  \rho_{E(a,b),m}(\calG_k) \not\supseteq \SL_2(\ZZ/m\ZZ) \}.
\]
\end{definition}
The main goal of this section is to prove the following bound.  
\begin{prop} \label{P:l-adic surjectivity}  There is an absolute constant $\beta\geq 1$ such that
\[
\frac{|B_{k,4}(x) \cup B_{k,9}(x) \cup \bigcup_{\ell \geq 5} B_{k,\ell}(x)|}{|B_k(x)|} \ll_{k,\norm{\cdot}}  \frac{(\log x)^{\beta}}{x^{[k:\QQ]/2}}.
\]  
\end{prop}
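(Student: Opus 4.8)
The plan is to bound each of the three pieces $B_{k,4}(x)$, $B_{k,9}(x)$, and $\bigcup_{\ell\geq 5}B_{k,\ell}(x)$ separately and then add up. For the prime levels $\ell$, the first step is to split the union at some threshold $L = L(x)$ (to be chosen, roughly a power of $\log x$): primes $\ell < L$ will be handled by the large sieve, and for the ``large'' primes $\ell \geq L$ I will need a uniform argument showing that no $E(a,b)$ with $(a,b)\in B_k(x)$ can have $\rho_{E(a,b),\ell}(\calG_k)\not\supseteq\SL_2(\ZZ/\ell\ZZ)$ once $\ell$ exceeds a bound depending only on the height $x$. This is exactly where the Masser--W\"ustholz isogeny estimates enter (as foreshadowed in the overview): for a non-CM curve, the largest ``exceptional'' prime $\ell$ for which the mod-$\ell$ image fails to contain $\SL_2$ is bounded polynomially in the (logarithmic) height and the degree $[k:\QQ]$, so taking $L$ a suitable power of $\log x$ kills all $\ell \geq L$, while CM curves form a negligible (in fact lower-dimensional, hence $O(x^{[k:\QQ]})$ versus $x^{2[k:\QQ]}$) subset of $B_k(x)$ that can be absorbed into the error term.

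For a fixed prime $\ell$ with $5 \le \ell < L$, I will apply the large sieve (Theorem~\ref{T:large sieve}) with $K = k$, $\Lambda = \OO_k^2$, $n=2$, and $Y = B_{k,\ell}(x)$. The point is to produce, for many primes $\p\in\Sigma_k$ with $\p\nmid \ell$, a genuine density deficit $\omega_\p > 0$ for the reductions. The mechanism: if $\rho_{E(a,b),\ell}(\calG_k)\not\supseteq \SL_2(\ZZ/\ell\ZZ)$, then $\rho_{E(a,b),\ell}(\calG_k)$ lies in one of finitely many proper subgroups $H$ of $\GL_2(\ZZ/\ell\ZZ)$ not containing $\SL_2$; in particular its image misses whole conjugacy classes of $\GL_2(\ZZ/\ell\ZZ)$. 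For a prime $\p$ of good reduction, $\rho_{E(a,b),\ell}(\Frob_\p)$ is the reduction data, and if $(r,s) \in \FF_\p^2$ is the reduction of $(a,b)$ then $\rho_{E(r,s),\ell}(\Frob_p)$ must avoid that conjugacy class $C$. Theorem~\ref{T:Jones} says the proportion of $(r,s)\in\FF_\p^2$ with Frobenius landing in a given conjugacy class $C$ (with the correct determinant $d \equiv N(\p)\bmod \ell$) is $|C|/|\SL_2(\ZZ/\ell\ZZ)| + O(\ell^5/N(\p)^{1/2})$. So choosing $C$ to be (say) a conjugacy class of nonscalar elements with suitable characteristic polynomial --- which every subgroup not containing $\SL_2$ must avoid for at least one choice of $C$ as we run over the possible $\det = d$ --- forces the reduced pairs $(r,s)$ of points in $Y$ to avoid a set of relative size $\gg 1/\ell$, i.e. one can take $\omega_\p \gg 1/\ell$ for a positive proportion of primes $\p$ with $N(\p)$ of size up to $Q$, provided $Q$ is large enough that the error term $\ell^5/N(\p)^{1/2}$ is dominated. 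Quantitatively this needs $N(\p) \gg \ell^{10+\epsilon}$ or so; since $\ell < L = (\log x)^{O(1)}$, taking $Q$ a fixed small power of $x$ (e.g. $Q = x^{1/2}$, or $x^{c}$) comfortably gives $\Sigma_k(Q)$-many usable primes with $\omega_\p \gg 1/\ell \gg 1/\log x$. Then $L(Q) \gg (Q/\log Q)\cdot (1/\ell) \gg Q/(\log x)^2$ roughly (summing $\omega_\p/(1-\omega_\p)$ over the linear-in-$Q$-many good primes below $Q$), and the large sieve yields $|B_{k,\ell}(x)| \ll (x^{2[k:\QQ]} + Q^4)/L(Q)$. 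Optimizing $Q$ as a power of $x$ --- balancing $x^{2[k:\QQ]}/Q$ against, well, ensuring $Q^4 \le x^{2[k:\QQ]}$ when $[k:\QQ]\geq 2$, and handling $[k:\QQ]=1$, i.e. $k=\QQ$ via $Q = x^{1/2}$-type choices --- gives a bound of the shape $|B_{k,\ell}(x)| \ll (\log x)^{O(1)} x^{[k:\QQ]/2}\cdot \ell^{O(1)}$, or better; the precise exponent bookkeeping is routine. Summing over $5 \le \ell < L$ contributes another factor polynomial in $L = (\log x)^{O(1)}$, hence stays within $(\log x)^\beta x^{[k:\QQ]/2}$ for a suitable absolute $\beta$.

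The levels $m = 4$ and $m = 9$ are handled the same way but are genuinely finite problems, so they are easier in principle: there is no large-prime tail, one simply applies the large sieve once with a fixed conjugacy class in $\GL_2(\ZZ/4\ZZ)$ (resp. $\GL_2(\ZZ/9\ZZ)$) that is avoided by every subgroup failing to contain $\SL_2(\ZZ/4\ZZ)$ (resp. $\SL_2(\ZZ/9\ZZ)$), using Theorem~\ref{T:Jones} with $m=4$ or $9$ and $\omega_\p$ a fixed positive constant for a positive-density set of $\p$, producing $L(Q) \gg Q$ and hence a bound $\ll x^{[k:\QQ]n}/Q$-type, optimized exactly as above. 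One subtlety: for these small composite levels the ``avoid a conjugacy class'' fact must be checked by a finite group-theoretic computation --- it is true precisely because $\SL_2(\ZZ/4\ZZ)$ and $\SL_2(\ZZ/9\ZZ)$ are the relevant targets and any proper-image obstruction is detectable on Frobenius traces/charpolys --- and one has to be slightly careful to choose $C$ with a determinant value that actually occurs among the $\p$ being used (stratify by $N(\p) \bmod m$). These contributions are each $\ll x^{[k:\QQ]/2}$ up to logs, so combining the three estimates gives the proposition with $\beta = \max$ of the exponents arising, which is an absolute constant.

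I expect the main obstacle to be the uniformity in $\ell$ for the large-prime tail: one must invoke Masser--W\"ustholz (or an effective version of Serre's open image theorem) to get that the set of $(a,b)\in B_k(x)$ with an exceptional prime $\ell > (\log x)^{c}$ is empty once $c$ is chosen appropriately --- more precisely, that for a non-CM curve $E(a,b)$ of logarithmic height $\ll \log x$, every prime $\ell$ with $\rho_{E(a,b),\ell}(\calG_k) \not\supseteq \SL_2(\ZZ/\ell\ZZ)$ satisfies $\ell \ll_k (\log x)^{c}$. Pinning down the allowed growth of $L$ against the sieving range $Q$ (so that the error term $\ell^5/N(\p)^{1/2}$ in Theorem~\ref{T:Jones} is controlled simultaneously for all $\ell < L$ and a positive proportion of $\p \in \Sigma_k(Q)$) is the delicate balancing act, together with separating off the CM curves. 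Everything else --- the group theory identifying the avoided conjugacy classes, verifying the large-sieve hypotheses (the ball condition is immediate; the $|Y_\p|$ bound is Theorem~\ref{T:Jones}), and the final optimization of $Q$ --- is bookkeeping.
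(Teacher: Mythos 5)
Your proposal follows essentially the same route as the paper: sieve $B_{k,m}(x)$ by Frobenius conjugacy classes of fixed determinant using Jones's equidistribution theorem and the large sieve over primes of $k$, truncate the primes $\ell\geq 5$ at $(\log x)^{O(1)}$ via Masser--W\"ustholz together with the height bound $h(j(E(a,b)))\ll\log x$, and pick the determinant residue $d$ by pigeonhole so the argument stays effective and uniform in $\ell$. The one imprecision is at $m=4,9$: no single conjugacy class is avoided by \emph{every} subgroup failing to contain $\SL_2(\ZZ/m\ZZ)$, so as you in effect do for prime $\ell$ one must sieve by each determinant-$1$ class separately and take the union (this is the content of Lemma~\ref{L:SL condition for m}); also, the paper disposes of CM curves by noting they lie in $B_{k,\ell}(x)$ for every $\ell\geq 5$, though your dimension count is a valid alternative.
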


\begin{remark}
For an elliptic curve $E$ over $k$, we have Galois representations ${\rho}_{E,\ell^\infty} \colon \calG_k \to \GL_2(\ZZ_\ell)$ coming from the action on the $\ell$-power torsion.
Proposition~\ref{P:l-adic surjectivity} (with Lemma~\ref{L:reduce to finite groups}) shows that for a ``random'' elliptic curve $E$ over $k$, we have ${\rho}_{E,\ell^\infty}(\calG_k)\supseteq \SL_2(\ZZ_\ell)$ for all primes $\ell$.  Since $\det \circ {\rho}_{E,\ell^\infty} \colon \calG_k \to \ZZ_\ell^\times$ is the $\ell$-adic cyclotomic character of $k$, we find that ${\rho}_{E,\ell^\infty}(\calG_k)$ is as ``large as possible'' for all $\ell$. 
\end{remark}

\begin{remark}
Our proof of Proposition~\ref{P:l-adic surjectivity} is clearly based on Duke's paper \cite{Duke}, which proves the $k=\QQ$ case (with Jones \cite{Jones} handling $4$ and $9$).

Unlike Duke's result, the implicit constants in Proposition~\ref{P:l-adic surjectivity} are effective.    The source of non-effective constants in \cite{Duke} is the use of the Siegel-Walfisz theorem.  We avoid this by applying the pigeonhole principle in the proof of Lemma~\ref{L:exceptional prime bound 1} and then sieving only by conjugacy classes with a fixed determinant.
\end{remark}

\subsection{Sieving elliptic curves by Frobenius conjugacy classes}
For a positive integer $m$ and a conjugacy class $C$ of $\GL_2(\ZZ/m\ZZ)$, define the set
\[
Y_C(x) := \{ (a,b) \in B_k(x):  \rho_{E(a,b),m}(\calG_k) \cap C = \emptyset  \big\}.
\]
For $d\in(\ZZ/m\ZZ)^\times$, let $\Sigma_{k}^1(Q;d,m)$ be the set of $\p\in\Sigma_k(Q)$ with degree $1$ (i.e., $N(\p)$ prime) and $N(\p)\equiv d \bmod{m}$. 

\begin{prop}  \label{P:nasty sieving}
Let $m$ be a positive integer and $C$ a conjugacy class of $\GL_2(\ZZ/m\ZZ)$.  Let $d$ be the unique element of $(\ZZ/m\ZZ)^\times$ such that $\det(C)=\{d\}$, and assume that $d\in \chi_k(\calG_k) \bmod{m} \subseteq (\ZZ/m\ZZ)^\times$.  Then
\[
\frac{ |Y_C(x)|}{|B_k(x)|} \ll_{k,\norm{\cdot}}  \Bigg( \frac{|C|}{|\SL_2(\ZZ/m\ZZ)|}  |\Sigma_{k}^1(x^{[k:\QQ]/2};d,m)| + O_k(m^5 x^{[k:\QQ]/4}) \Bigg)^{-1}.
\]
\end{prop}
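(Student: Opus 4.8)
The plan is to apply the large sieve (Theorem~\ref{T:large sieve}) to the set $Y = Y_C(x)$, taking $K = k$, $\Lambda = \OO_k^2$ (so $n = 2$), $\norm{\cdot}$ the fixed norm, and sieving level $Q = x^{[k:\QQ]/2}$. Condition (1) holds with $a_0 = 0$, since $Y_C(x) \subseteq B_k(x)$ lies in the ball of radius $x$ about the origin. Everything then hinges on producing weights $\omega_\p \in [0,1)$ for which condition (2) holds, and these will come from reducing $E(a,b)$ modulo degree-one primes of $k$ and invoking Theorem~\ref{T:Jones}.

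First I would establish the basic containment. Fix $\p \in \Sigma_k^1(Q;d,m)$; then $N(\p) = p$ is prime, $p \equiv d \bmod m$, and $p \nmid m$ (as $d \in (\ZZ/m\ZZ)^\times$), and we may identify $\Lambda/\p\Lambda = \FF_p^2$. Let $(a,b) \in Y_C(x)$ with reduction $(\bar a, \bar b) \in \FF_p^2$. If $\Delta_{\bar a,\bar b} = 0$, then $(\bar a, \bar b) \notin \Omega_C(p)$ by the definition of $\Omega_C(p)$. Otherwise $E(a,b)$ has good reduction at $\p$ with reduction $E(\bar a,\bar b)/\FF_p$; since $\p \nmid m$, the representation $\rho_{E(a,b),m}$ is unramified at $\p$ and the reduction map induces a Galois-equivariant isomorphism $E(a,b)[m] \cong E(\bar a,\bar b)[m]$, so $\rho_{E(\bar a,\bar b),m}(\Frob_p)$ is conjugate in $\GL_2(\ZZ/m\ZZ)$ to the image under $\rho_{E(a,b),m}$ of a Frobenius element at $\p$, which is an element of $\rho_{E(a,b),m}(\calG_k)$. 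As $C$ is a conjugacy class disjoint from $\rho_{E(a,b),m}(\calG_k)$, it follows that $\rho_{E(\bar a,\bar b),m}(\Frob_p) \notin C$, i.e. $(\bar a, \bar b) \notin \Omega_C(p)$ again. In either case the image $Y_\p$ of $Y_C(x)$ in $\FF_p^2$ avoids $\Omega_C(p)$, so $|Y_\p| \le p^2 - |\Omega_C(p)| = (1 - \omega_\p)\,|\Lambda/\p\Lambda|$ with $\omega_\p := |\Omega_C(p)|/p^2$; moreover $\omega_\p \in [0,1)$ because $\Omega_C(p)$ omits the nonempty locus $\{(r,s) : \Delta_{r,s} = 0\}$. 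Setting $\omega_\p = 0$ for all other $\p \in \Sigma_k(Q)$, condition (2) is satisfied.

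The large sieve then gives $|Y_C(x)| \ll_{k,\norm{\cdot}} (x^{2[k:\QQ]} + Q^{4})/L(Q) \ll_{k,\norm{\cdot}} x^{2[k:\QQ]}/L(Q)$ (using $Q^{4} = x^{2[k:\QQ]}$), whence $|Y_C(x)|/|B_k(x)| \ll_{k,\norm{\cdot}} 1/L(Q)$ by \eqref{E:box asymptotics}. To finish I would bound $L(Q)$ from below, retaining in its defining sum only the prime ideals $\aA = \p$ with $\p \in \Sigma_k^1(Q;d,m)$ (all summands being nonnegative) and using $\omega_\p/(1-\omega_\p) \ge \omega_\p$:
\[
L(Q) \ge \sum_{\p \in \Sigma_k^1(Q;d,m)} \omega_\p .
\]
Since $\det(C) = \{d\}$ and $N(\p) = p \equiv d \bmod m$ for each such $\p$, Theorem~\ref{T:Jones} yields $\omega_\p = |C|/|\SL_2(\ZZ/m\ZZ)| + O(m^5 p^{-1/2})$; summing, and using $\sum_{\p \in \Sigma_k(Q)} N(\p)^{-1/2} \le [k:\QQ]\sum_{p\le Q} p^{-1/2} \ll_k \sqrt{Q} = x^{[k:\QQ]/4}$, we obtain
\[
L(Q) \ge \frac{|C|}{|\SL_2(\ZZ/m\ZZ)|}\, |\Sigma_k^1(Q;d,m)| + O_k\bigl( m^5 x^{[k:\QQ]/4} \bigr).
\]
Combined with $|Y_C(x)|/|B_k(x)| \ll_{k,\norm{\cdot}} 1/L(Q)$, this is precisely the asserted bound.

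There is no deep obstacle here; the proof is an assembly of the large sieve, Jones' equidistribution theorem (Theorem~\ref{T:Jones}), and the standard good-reduction dictionary. The two points demanding care are: (i) the identification, at a prime $\p \nmid m$ of good reduction, of the Frobenius conjugacy class of $E(a,b)$ with that of its reduction over $\FF_p$ --- only the conjugacy class, not a matrix, is canonical, which is exactly what is needed here since $C$ is a conjugacy class; and (ii) keeping the error term in the lower bound for $L(Q)$ of genuinely lower order, which forces the sieve to run over degree-one primes in the single residue class $d \bmod m$ --- the class dictated by the shape of Theorem~\ref{T:Jones}, and one containing a positive proportion of the primes precisely because $d \in \chi_k(\calG_k) \bmod m$ (Chebotarev for $k(\mu_m)/k$). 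When the resulting denominator fails to be positive the estimate is vacuous, as it should be.
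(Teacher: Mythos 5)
Your proposal is correct and follows essentially the same route as the paper's proof: large sieve over $\OO_k^2$ with $Q=x^{[k:\QQ]/2}$, weights $\omega_\p=|\Omega_\p|/N(\p)^2$ supported on degree-one primes with $N(\p)\equiv d\bmod m$, the good-reduction argument showing $Y_\p$ avoids $\Omega_\p$, and the lower bound on $L(Q)$ via Theorem~\ref{T:Jones}. The extra care you take with the Frobenius conjugacy class identification and the role of $d\in\chi_k(\calG_k)\bmod m$ matches the paper's (more terse) treatment.
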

\begin{proof}  
Let $\Lambda$ be the $\OO_k$-module $\OO_k^2$.  We have already chosen a norm $\norm{\cdot}$ on $\Lambda_\RR:=\RR \otimes_\ZZ \OO_k^2$, and the set $B_k(x)\subseteq \Lambda_\RR$ lies in a ball of radius $x$.  Let $Q:=x^{[k:\QQ]/2}$.

For each $\p\in\Sigma_{k}^1(Q;d,m)$, define
\[
\Omega_\p = \{ (r,s) \in \FF_\p^2 : \Delta_{r,s} \neq 0, \, \rho_{E(r,s),m}(\Frob_{N(\p)})  \in C \}
\]
and $\omega_\p = |\Omega_\p|/N(\p)^2$.  Let $Y_{\p}$ be the image of $Y_C(x)$ in $\FF_\p^2$ via reduction modulo $\p$.   

Suppose that $(a,b)\in B_k(x)$ satisfies $(a,b) \bmod{\p} \in \Omega_\p$; then $E(a,b)$ has good reduction at $\p$ and $\rho_{E(a,b),m}(\Frob_\p) \subseteq C$.  So $\rho_{E(a,b),m}(\calG_k) \cap C \neq \emptyset$, and thus $(a,b) \notin Y_C(x)$.  This shows that
\begin{equation*}
Y_{\p} \subseteq \FF_\p^2 - \Omega_\p,
\end{equation*}
and hence $|Y_{\p}| \leq (1-\omega_\p)|\Lambda/\p\Lambda|.$   

For $\p\notin\Sigma_{k}^1(Q;d,m)$, define $\omega_\p=0$.   By the large sieve (Theorem~\ref{T:large sieve}),
\begin{equation} \label{E: result of large sieve}
|Y_C(x)| \ll_{k,\norm{\cdot}} \frac{x^{2[k:\QQ]} }{L(Q)},
\end{equation}
where 
\[
L(Q):=\sum_{\substack{\aA\subseteq \OO_k \text{ squarefree} \\ N(\aA)\leq Q}} \prod_{\p|\aA}  \frac{\omega_\p}{1-\omega_\p} \geq  \sum_{\p \in \Sigma_{k}^1(Q;d,m)} \omega_\p.
\]
For $\p \in \Sigma_{k}^1(Q;d,m)$, Theorem~\ref{T:Jones} gives
\[
\omega_\p = \frac{|C|}{|\SL_2(\ZZ/m\ZZ)|} + O(m^5/N(\p)^{1/2}).
\]
Therefore
\begin{align*}
L(Q) & \geq  \sum_{\p \in \Sigma_{k}^1(Q;d,m)} \bigg( \frac{|C|}{|\SL_2(\ZZ/m\ZZ)|} + O(m^5/N(\p)^{1/2}) \bigg)\\
&=\frac{|C|}{|\SL_2(\ZZ/m\ZZ)|}  |\Sigma_{k}^1(Q;d,m)| + O_k(m^5 Q^{1/2}).
\end{align*}
The assumption $d\in \chi_k(\calG_k) \bmod{m}$ is needed to guarantees that $L(Q) \gg_{k,m} 1$.  The proposition follows by combining our lower bound of $L(Q)$ and (\ref{E:box asymptotics}) with (\ref{E: result of large sieve}).
\end{proof}

\subsection{Galois image modulo an integer}
The following proposition shows that for a ``random'' elliptic curve $E$ over $k$, $\rho_{E,m}$ has large image.
\begin{prop} \label{P:exceptional prime bound m}
For a positive integer $m$,
\[
\frac{|B_{k,m}(x)|}{|B_{k}(x)|} \ll_{k,\norm{\cdot},m} \frac{\log x}{x^{[k:\QQ]/2}}.
\]
\end{prop}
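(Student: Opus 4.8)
The plan is to deduce this from Proposition~\ref{P:nasty sieving} by choosing, for the given $m$, a single well-chosen conjugacy class $C$ of $\GL_2(\ZZ/m\ZZ)$ whose determinant lies in $\chi_k(\calG_k)\bmod m$, and then observing that if $\rho_{E(a,b),m}(\calG_k)$ fails to contain $\SL_2(\ZZ/m\ZZ)$ then it must miss $C$, so that $B_{k,m}(x)\subseteq Y_C(x)$. In fact one needs a little care: $\rho_{E,m}(\calG_k)\not\supseteq \SL_2(\ZZ/m\ZZ)$ means the image is contained in some maximal subgroup $M$ of $\GL_2(\ZZ/m\ZZ)$ of the appropriate index, but there are finitely many such $M$ (a number depending only on $m$), and for each one can pick a conjugacy class $C_M\subseteq\GL_2(\ZZ/m\ZZ)$ disjoint from $M$ with $\det(C_M)\in\chi_k(\calG_k)$. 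First I would verify such a $C_M$ exists: since $k\neq\QQ$, the cyclotomic image $\chi_k(\calG_k)\bmod m$ is a subgroup of $(\ZZ/m\ZZ)^\times$ that may be proper, so one must confirm that for every proper subgroup $M$ of $\GL_2(\ZZ/m\ZZ)$ surjecting onto $(\ZZ/m\ZZ)^\times$ under $\det$ there is an element of $\GL_2(\ZZ/m\ZZ)\setminus M$ whose determinant lies in the prescribed subgroup — this is a finite check for each fixed $m$ and can be absorbed into the $m$-dependence of the implied constant.

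Granting that, I would write $B_{k,m}(x)\subseteq \bigcup_M Y_{C_M}(x)$, a union over a finite (bounded in terms of $m$) set of maximal subgroups, and apply Proposition~\ref{P:nasty sieving} to each term. That proposition gives
\[
\frac{|Y_{C_M}(x)|}{|B_k(x)|}\ll_{k,\norm{\cdot}}\Bigg(\frac{|C_M|}{|\SL_2(\ZZ/m\ZZ)|}\,|\Sigma_k^1(x^{[k:\QQ]/2};d_M,m)|+O_k(m^5 x^{[k:\QQ]/4})\Bigg)^{-1},
\]
where $d_M=\det(C_M)$. The remaining task is to show the main term inside the parentheses is $\gg_{k,m} x^{[k:\QQ]/2}/\log x$. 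For this I would invoke the prime ideal theorem for the cyclotomic extension $k(\mu_m)/k$ (or equivalently Chebotarev / the classical prime number theorem in arithmetic progressions, since degree-one primes of $k$ with $N(\p)\equiv d_M\bmod m$ correspond to rational primes $p\equiv d_M\bmod m$ that split completely in $k$): the count $|\Sigma_k^1(Q;d_M,m)|$ is asymptotic to a positive constant (depending on $k$ and $m$) times $Q/\log Q$. With $Q=x^{[k:\QQ]/2}$ this is $\gg_{k,m} x^{[k:\QQ]/2}/\log x$, which dominates the error term $O_k(m^5 x^{[k:\QQ]/4})$ for $x$ large. Summing the $O_m(1)$ terms over $M$ and absorbing the constants yields the claimed bound.

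The main obstacle I anticipate is the group-theoretic input: namely, pinning down, uniformly over the proper subgroups $M$ of $\GL_2(\ZZ/m\ZZ)$ containing $\rho_{E,m}(\calG_k)$ in the relevant cases, a conjugacy class disjoint from $M$ whose determinant is hit by $\chi_k$. When $\chi_k(\calG_k)=\Zhat^\times$ this is immediate, but for general $k\neq\QQ$ one must be sure that restricting to a proper subgroup of determinants still leaves enough conjugacy classes outside each $M$; this is where the hypothesis $k\neq\QQ$ and the structure of $\SL_2(\ZZ/m\ZZ)$ enter, and it is essentially the only place genuine care is required — everything else is a routine packaging of the large sieve estimate from Proposition~\ref{P:nasty sieving} together with the prime number theorem in arithmetic progressions.
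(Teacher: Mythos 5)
Your analytic skeleton matches the paper's: cover $B_{k,m}(x)$ by finitely many sets $Y_C(x)$, apply Proposition~\ref{P:nasty sieving} to each, and lower-bound $|\Sigma_k^1(Q;d,m)|$ by $\gg_{k,m} Q/\log Q$ via Chebotarev, with all constants allowed to depend on $m$. But there is a genuine gap in the group theory, and it sits exactly where you write ``this is a finite check for each fixed $m$.'' What you need, for each subgroup $M$ not containing $\SL_2(\ZZ/m\ZZ)$, is a conjugacy class of $\GL_2(\ZZ/m\ZZ)$ that is \emph{entirely disjoint} from $M$ and has determinant in $\chi_k(\calG_k)\bmod m$; what you then propose to verify is only the existence of an \emph{element} outside $M$ with the right determinant. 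The latter is trivial (any element of $\SL_2(\ZZ/m\ZZ)\setminus M$ has determinant $1$, which lies in every subgroup of $(\ZZ/m\ZZ)^\times$), but it does not produce a class $C_M$ with $C_M\cap M=\emptyset$: a proper subgroup can perfectly well meet every conjugacy class without containing any of them. The correct statement --- if $H$ meets every determinant-one conjugacy class of $\GL_2(\ZZ/m\ZZ)$, then $H\supseteq\SL_2(\ZZ/m\ZZ)$ --- is Lemma~\ref{L:SL condition for m}, and its proof is not a routine computation: it reduces to $m\in\{4,9,\ell\}$ via Lemmas~\ref{L:SL Goursat finite} and \ref{L:reduce to finite groups}, and for prime $m$ invokes Serre's classification of subgroups of $\GL_2(\FF_\ell)$ together with a Jordan-type counting argument showing that a finite group is not the union of the conjugates of a proper subgroup. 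Since the proposition is asserted for every positive integer $m$, deferring this to an unspecified finite verification for each $m$ leaves the proof incomplete.

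Once that lemma is in hand, the paper's packaging is also cleaner than yours: take $d=1$ and let $C_1,\dots,C_n$ be \emph{all} conjugacy classes of determinant $1$; the contrapositive of Lemma~\ref{L:SL condition for m} gives $B_{k,m}(x)\subseteq\bigcup_i Y_{C_i}(x)$ directly, with no enumeration of maximal subgroups and no concern about which determinants $\chi_k$ hits, since $1\in\chi_k(\calG_k)\bmod m$ always. (Note also that this proposition is applied with $k=\QQ$ and $m=72$ in the proof of Theorem~\ref{T:Jones-Serre curves}, so your appeal to $k\neq\QQ$ is out of place here.)
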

\begin{proof}
Let $C_1,\dots,C_n$ be the conjugacy classes of $\GL_2(\ZZ/m\ZZ)$ with determinant $1$.
By Lemma~\ref{L:SL condition for m}, we have
$B_{k,m}(x) \subseteq \bigcup_{i=1}^n Y_{C_i}(x)$.  Proposition~\ref{P:nasty sieving} gives 
\[
\frac{|B_{k,m}(x)|}{|B_k(x)|} \leq \sum_{i=1}^n \frac{|Y_{C_i}(x)|}{|B_k(x)|}  \ll_{k,\norm{\cdot}}  \sum_{i=1}^n \Big( \frac{|C_i|}{|\SL_2(\ZZ/m\ZZ)|}  |\Sigma_{k}^1(x^{[k:\QQ]/2};1,m)| + O_k(m^5 x^{[k:\QQ]/4})\Big)^{-1}.
\]
Using $ |\Sigma_{k}^1(x^{[k:\QQ]/2};1,m)|\gg_{k,m} x^{[k:\QQ]/2}/\log x$, we deduce that 
\[
\frac{|B_{k,m}(x)|}{|B_k(x)|} \ll_{k,\norm{\cdot},m} \sum_{i=1}^n  \big( x^{[k:\QQ]/2}/\log x\big)^{-1} \ll_m \frac{\log x}{x^{[k:\QQ]/2}}.
\qedhere
\]
\end{proof}

\subsection{Galois image modulo primes} 
Let $h$ be the absolute logarithmic height on $\PP^1(\Qbar)$.  For an elliptic curve $E$, let $j(E)$ be its $j$-invariant.  The following theorem bounds the number of $\ell$'s that we need to consider (in particular, it gives an effective version of a result of Serre \cite{Serre-Inv72}).
\begin{thm}[Masser-W\"ustholz  \cite{MasserWustholz}] \label{T:MasserWustholz}
Let $E$ be an elliptic curve defined over a number field $k$, and assume that $E$ does not have complex multiplication.  There are positive absolute constants $c$ and $\gamma$ such that if $\ell > c \big( \max\big\{[k:\QQ], h(j(E))\big\} \big)^\gamma$,  then
$\rho_{E,\ell}(\calG_k)  \supseteq \SL_2(\ZZ/\ell\ZZ).$
\end{thm}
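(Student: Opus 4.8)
\emph{Overall strategy.} The plan is to prove the equivalent effective statement: there are absolute constants $c,\gamma$ such that if $\rho_{E,\ell}(\calG_k)\not\supseteq\SL_2(\ZZ/\ell\ZZ)$ then $\ell\le c\,(\max\{[k:\QQ],h(j(E))\})^{\gamma}$. We may assume $\ell\ge 5$ (for $\ell\le 3$ the bound is automatic), and set $G:=\rho_{E,\ell}(\calG_k)\subseteq\GL_2(\FF_\ell)$. By Dickson's classification of subgroups of $\GL_2(\FF_\ell)$, failure of $G\supseteq\SL_2(\FF_\ell)$ means that $G$ lies in a Borel subgroup, or in the normalizer of a (split or nonsplit) Cartan subgroup, or the image $\bar G$ of $G$ in $\PGL_2(\FF_\ell)$ is isomorphic to $A_4$, $S_4$ or $A_5$. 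The key external input is the \emph{isogeny estimate} of Masser and W\"ustholz: if $E'$ is an elliptic curve over a number field $K$ that is $K$-isogenous to an elliptic curve $E_0$ over $K$, then there is a $K$-isogeny $E_0\to E'$ of degree at most $c_1\,(\max\{[K:\QQ],h(j(E_0))\})^{\gamma_1}$, with $c_1,\gamma_1$ absolute. I would convert each Dickson case into the statement that, over a field $k''\supseteq k$ with $[k'':k]$ bounded by an absolute constant, $E_{k''}$ has a $k''$-rational cyclic subgroup of order $\ell$, and then apply the isogeny estimate over $k''$.

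\emph{The reducible case, and the mechanism.} Suppose $G$ lies in a Borel subgroup, so $E[\ell]$ has a $\calG_k$-stable line $C$; then $\phi\colon E\to E':=E/C$ is a $k$-rational isogeny of degree $\ell$ and $E'$ is $k$-isogenous to $E$. By the isogeny estimate there is a $k$-isogeny $\psi\colon E\to E'$ with $\deg\psi\le B:=c_1(\max\{[k:\QQ],h(j(E))\})^{\gamma_1}$. Then $f:=\widehat\psi\circ\phi\in\End(E)$ is nonzero of degree $\ell\deg\psi$; since $E$ has no complex multiplication, $\End(E)=\ZZ$, so $f=[n]$ with $n^2=\ell\deg\psi$. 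As $\ell$ is prime this forces $\ell\mid n$, hence $\ell^2\mid\ell\deg\psi$, hence $\ell\le\deg\psi\le B$. This is exactly the desired bound, and the same computation closes the remaining cases once an $\ell$-isogeny over a controlled extension is in hand.

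\emph{The Cartan and exceptional cases.} If $G$ lies in the normalizer $N(C_0)$ of a Cartan $C_0$, pass to the quadratic extension $k_2/k$ cut out by $\calG_k\to N(C_0)/C_0\cong\ZZ/2$, so that $\rho_{E,\ell}(\calG_{k_2})\subseteq C_0$. When $C_0$ is split, $\rho_{E,\ell}(\calG_{k_2})$ is diagonal, $E_{k_2}$ has a $k_2$-rational $\ell$-isogeny, and the reducible argument applied over $k_2$ (with $[k_2:\QQ]=2[k:\QQ]$) gives the bound. When $C_0$ is nonsplit there need be no rational isogeny, and this is the delicate case: following Serre, one compares with complex multiplication by producing --- via an effective Chebotarev estimate for the quadratic field $L\subseteq k(E[\ell])$ fixed by $\rho_{E,\ell}^{-1}(C_0)$, whose discriminant is divisible only by primes above $\ell$ and primes of bad reduction of $E$ --- a prime $\p$ of $k$ of good reduction, small norm, and inert in $L$; for such $\p$ the matrix $\rho_{E,\ell}(\Frob_\p)$ lies in the nontrivial coset of $C_0$ in $N(C_0)$ and hence has trace $0$, so $a_\p\equiv 0\pmod\ell$, and if $a_\p\ne 0$ the Hasse bound gives $\ell\le|a_\p|\le 2\sqrt{N(\p)}$ with $N(\p)$ effectively bounded. (If $G$ already lies inside $C_0$ one argues similarly after a further quadratic descent exhibiting $E[\ell]$ as induced from a character.) Finally, if $\bar G\cong A_4$, $S_4$ or $A_5$, let $k''$ be the fixed field of the preimage in $\calG_k$ of the scalar subgroup of $G$; then $[k'':k]=|\bar G|\le 60$, over $k''$ the image is scalar, so every line of $E[\ell]$ is $\calG_{k''}$-stable, and the reducible argument over $k''$ again yields $\ell\le c_1(\max\{60[k:\QQ],h(j(E))\})^{\gamma_1}$.

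\emph{The main obstacle.} The hard part is the isogeny estimate itself, which is genuinely non-elementary: one realizes the graph of an isogeny $E\to E'$ as an abelian subvariety of $E\times E'$, so that its tangent space at the origin is a $\Qbar$-rational line in $\mathrm{Lie}(E\times E')$, and one applies the quantitative form of W\"ustholz's analytic subgroup theorem to bound the height --- hence the degree --- of this subvariety in terms of periods of $E$ and $E'$, which are in turn controlled by the Faltings heights and ultimately by $[k:\QQ]$ and $h(j(E))$. The remaining difficulties are bookkeeping rather than conceptual: relating $h_{\mathrm{Fal}}$ to $h(j(E))$ and controlling $h(j(E'))$ within the isogeny class, absorbing the bounded field extensions into the constants (which preserves the shape $c(\max\{[k:\QQ],h(j(E))\})^\gamma$), and making the Chebotarev step in the nonsplit-Cartan case effective and uniform in $\ell$ --- cleanly under GRH, and unconditionally modulo the usual care with a possible Siegel zero of $L$.
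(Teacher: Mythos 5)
First, note that the paper does not prove this statement: Theorem~\ref{T:MasserWustholz} is quoted verbatim from \cite{MasserWustholz} and used as a black box, so there is no internal proof to compare against. Judged on its own terms, your overall architecture --- Dickson's classification, reduction to a $k''$-rational $\ell$-isogeny over an extension $k''/k$ of absolutely bounded degree, and the endomorphism computation $\widehat\psi\circ\phi=[n]$, $n^2=\ell\deg\psi$, hence $\ell\le\deg\psi$ --- is exactly how Masser and W\"ustholz derive the Galois statement from their isogeny estimate, and your Borel, split-Cartan and exceptional ($A_4$, $S_4$, $A_5$) cases are handled correctly.

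The genuine gap is the nonsplit Cartan case. The Chebotarev/trace-zero argument you propose there is Serre's original one, and it is precisely the step that makes Serre's theorem ineffective: unconditionally, the least prime inert in $L$ is bounded only in terms of $|d_L|$, and $d_L$ involves the primes of bad reduction of $E$ and $\ell$ itself, neither of which is controlled by $\max\{[k:\QQ],h(j(E))\}$ (quadratic twists change the conductor without changing $j$). Beyond the Siegel-zero issue you concede, you also cannot exclude $a_\p=0$ (supersingular reduction) at the prime you produce. So this branch does not yield absolute constants $c,\gamma$ and a bound polynomial in $\max\{[k:\QQ],h(j(E))\}$, which is the whole point of invoking this theorem: the paper needs an \emph{effective} version of Serre's result, with all constants effective. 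The actual Masser--W\"ustholz argument avoids Chebotarev entirely: after at most a quadratic extension the image lies in the nonsplit Cartan itself, its commutant in $\End(E[\ell])\cong M_2(\FF_\ell)$ then contains a non-scalar element $u$, the graph $\Gamma_u=\{(x,ux)\}$ is a Galois-stable subgroup of $(E\times E)[\ell]$ of order $\ell^2$ not of product form, and their isogeny theorem applied to the abelian surface $E\times E$ (whose endomorphism ring is $M_2(\ZZ)$ since $E$ has no CM) bounds $\ell$ by a degree computation analogous to your reducible case. Relatedly, your parenthetical for the sub-case where the image already lies inside the nonsplit Cartan is off: such a representation has abelian image and is not induced from a character of an index-two subgroup of $\calG_k$, so ``a further quadratic descent'' produces nothing; that sub-case too is what the $E\times E$ device is for.
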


\begin{lemma} \label{L:j bound}
If $(a,b) \in B_k(x)$, then $h(j(E(a,b)))\ll_{k,\norm{\cdot}} \log x$.  \end{lemma}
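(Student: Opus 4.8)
The plan is to make the $j$-invariant explicit and then bound its height place by place. Recall that for the curve $E(a,b)\colon Y^2 = X^3 + aX + b$ one has $j(E(a,b)) = \dfrac{6912\,a^3}{4a^3 + 27b^2}$, and that the denominator equals $-\Delta_{a,b}/16$, which is nonzero for $(a,b)\in B_k(x)$. So, writing $u := 6912\,a^3$ and $w := 4a^3 + 27b^2$, both elements of $\OO_k$, the $j$-invariant is the point $[u:w]$ of $\PP^1(k)$.

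First I would use the place-by-place formula for the absolute logarithmic height,
\[
h\big(j(E(a,b))\big) = h([u:w]) = \frac{1}{[k:\QQ]}\sum_{v} [k_v:\QQ_v]\,\log\max\big(|u|_v,\,|w|_v\big),
\]
the sum running over all places $v$ of $k$ with the standard normalizations. Since $u$ and $w$ are algebraic integers, $\max(|u|_v,|w|_v) \le 1$ at every non-archimedean $v$, so those terms are $\le 0$ and can be dropped:
\[
h\big(j(E(a,b))\big) \le \frac{1}{[k:\QQ]}\sum_{v\mid\infty} [k_v:\QQ_v]\,\log\max\big(|u|_v,\,|w|_v\big).
\]

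Next I would estimate the archimedean contributions. Through the archimedean places, $\OO_k^2$ embeds in the finite-dimensional real vector space $\RR\otimes_\ZZ\OO_k^2$, and all norms on the latter are equivalent; hence $\norm{(a,b)}\le x$ forces $|\sigma(a)|,|\sigma(b)| \ll_{k,\norm{\cdot}} x$ for every embedding $\sigma\colon k\hookrightarrow\CC$. (This is the only step in which the dependence on the fixed norm enters.) Consequently $|u|_v \ll x^3$ and $|w|_v \le 4|a|_v^3 + 27|b|_v^2 \ll x^3$ at each archimedean $v$, so $\log\max(|u|_v,|w|_v) \le 3\log x + O_{k,\norm{\cdot}}(1) \ll_{k,\norm{\cdot}} \log x$ for $x$ large. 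Summing over the archimedean places and using $\sum_{v\mid\infty}[k_v:\QQ_v] = [k:\QQ]$ gives $h(j(E(a,b))) \ll_{k,\norm{\cdot}} \log x$, as claimed; all implied constants are effective.

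I do not expect a real obstacle here. The only points requiring care are invoking $\Delta_{a,b}\neq 0$ so that $[u:w]$ is a well-defined affine point (which is what lets us discard the non-archimedean places with the right sign), and the passage from the single inequality $\norm{(a,b)}\le x$ to simultaneous bounds on the archimedean absolute values $|\sigma(a)|$, $|\sigma(b)|$, which is routine norm equivalence.
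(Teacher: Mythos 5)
Your proof is correct and follows essentially the same route as the paper: write $j(E(a,b))$ as a projective point with integral coordinates, discard the non-archimedean contributions to the height, and bound the archimedean ones via equivalence of the fixed norm with a sup-type norm on $\RR\otimes_\ZZ\OO_k^2$. No gaps.
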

\begin{proof} 
Let $\Sigma_k^\infty$ be the set of archimedean places of $k$.  For each $v\in \Sigma_k^\infty$, let $|\!\cdot\!|_v$ be an absolute value on the completion $k_v$ of $k$ at $v$. 
On $\prod_{v\in\Sigma_k^\infty} k_v^2$, we have a norm
$\norm{ (a_v,b_v)_v }_1 = \sup_{v\in \Sigma_k^\infty} |a_v|_v + \sup_{v\in \Sigma_k^\infty} |b_v|_v.$
Using the natural isomorphism $\RR\otimes_\ZZ \OO_k^2 \cong \prod_{v\in\Sigma_k^\infty} k_v^2$, we may view $\norm{\cdot}_1$ as a norm on $\RR\otimes_\ZZ \OO_k^2$.
Recall that $j(E(a,b)) = - 1728(4a)^3/ \Delta_{a,b}$.  Since $a$ and $b$ are integral, we have
 \begin{align*}
 h(j(E(a,b))) & = h([- 1728(4a)^3 : \Delta_{a,b}] )  \ll_k {\sum}_{v\in \Sigma_k^\infty } \log( \max\{ 1728\cdot 4^3|a|^3_v, |\Delta_{a,b}|_v \} )
 \end{align*}
and thus $h(j(E(a,b)))  \ll_k \log \norm{(a,b)}_1$. The norms $\norm{\cdot}$ and $\norm{\cdot}_1$ are equivalent, so 
\[
h(j(E(a,b))) \ll_k \log \norm{(a,b)}_1 \ll_{k,\norm{\cdot}} \log \norm{(a,b)} \leq \log x.
\qedhere
\]
\end{proof}

\begin{lemma} \label{L:MasserWustholz}
There is a constant $c_k>0$ (depending only on $k$) and an absolute constant $\gamma>0$ such that 
\[
\big\{ (a,b) \in B_k(x):  \rho_{E(a,b),\ell}(\calG_k) \not\supseteq \SL_2(\ZZ/\ell\ZZ)  \text{ for some prime $\ell\geq 5$} \big\} =  \bigcup_{5\leq \ell \leq c_k (\log x)^\gamma} B_{k,\ell}(x).
\]
\end{lemma}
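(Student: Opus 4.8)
The plan is to prove the two inclusions separately. The inclusion $\supseteq$ is immediate from the definition of the sets $B_{k,\ell}(x)$, so the content is the inclusion $\subseteq$. Unravelling the definitions, this amounts to the following statement: if $(a,b)\in B_k(x)$ satisfies $\rho_{E(a,b),\ell}(\calG_k)\not\supseteq \SL_2(\ZZ/\ell\ZZ)$ for some prime $\ell\geq 5$, then $\rho_{E(a,b),\ell'}(\calG_k)\not\supseteq\SL_2(\ZZ/\ell'\ZZ)$ for \emph{some} prime $\ell'$ with $5\leq \ell'\leq c_k(\log x)^\gamma$. I would take $\gamma$ to be the Masser--W\"ustholz exponent of Theorem~\ref{T:MasserWustholz}, and fix $c_k$ below; since the statement is only used to prove an $\ll$-estimate, I may assume throughout that $x$ is large enough that $c_k(\log x)^\gamma\geq 5$.

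First I would fix the constant. By Lemma~\ref{L:j bound} there is a constant $C=C(k,\norm{\cdot})>0$ with $h(j(E(a,b)))\leq C\log x$ for every $(a,b)\in B_k(x)$ (and $x$ large). Letting $c$ be the absolute constant of Theorem~\ref{T:MasserWustholz}, I would choose $c_k$ — depending only on $k$, since the norm $\norm{\cdot}$ is fixed once and for all — so large that
\[
c_k (\log x)^\gamma \;\geq\; \max\{\, 5,\ c\,(\max\{[k:\QQ],\, C\log x\})^\gamma \,\}
\]
for all $x$ in the relevant range; e.g.\ $c_k = c\,(C+[k:\QQ])^\gamma + 5$ works once $\log x\geq 1$.

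Now suppose $(a,b)\in B_k(x)$ and $\ell\geq 5$ is a prime with $\rho_{E(a,b),\ell}(\calG_k)\not\supseteq\SL_2(\ZZ/\ell\ZZ)$. If $\ell\leq c_k(\log x)^\gamma$, we are done with $\ell'=\ell$. Otherwise $\ell > c\,(\max\{[k:\QQ],h(j(E(a,b)))\})^\gamma$, so by the contrapositive of Theorem~\ref{T:MasserWustholz} the curve $E(a,b)$ must have complex multiplication. In that case $\rho_{E(a,b),p}(\calG_k)$ is contained in the normalizer of a Cartan subgroup of $\GL_2(\ZZ/p\ZZ)$ for \emph{every} prime $p$: if $E(a,b)$ has CM by an order in an imaginary quadratic field $F$, then $\rho_{E(a,b),p}(\calG_{k(F)})$ lies in a Cartan subgroup and $[k(F):k]\leq 2$. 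For $p=5$ the normalizer of a Cartan subgroup of $\GL_2(\ZZ/5\ZZ)$ has order at most $48$, which is strictly less than $|\SL_2(\ZZ/5\ZZ)|=120$; hence $\rho_{E(a,b),5}(\calG_k)\not\supseteq\SL_2(\ZZ/5\ZZ)$, and we may take $\ell'=5$. This completes the inclusion $\subseteq$ and the proof.

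The only genuinely delicate point is that Theorem~\ref{T:MasserWustholz} excludes CM curves, so the CM members of $B_k(x)$ must be handled separately; the observation that such a curve already fails to have full image at the single prime $5$ (by the order comparison above) disposes of them cleanly. Everything else is bookkeeping with the constants — absorbing the fixed-norm dependence of Lemma~\ref{L:j bound} into $c_k$, and using $h(j(E(a,b)))\ll_{k,\norm{\cdot}}\log x$ on $B_k(x)$.
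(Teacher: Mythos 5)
Your proof is correct and follows essentially the same route as the paper: combine Theorem~\ref{T:MasserWustholz} with the height bound of Lemma~\ref{L:j bound} to handle non-CM curves, and observe that CM curves already fail at a small prime (the paper simply asserts that CM curves fail at every prime $\ell\geq 5$, which your Cartan-normalizer order comparison at $\ell=5$ makes explicit). The constant bookkeeping and the implicit restriction to large $x$ match the paper's intent.
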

\begin{proof} 
For an elliptic curve $E/k$ with complex multiplication, we have $\rho_{E,\ell}(\calG_k) \not\supseteq \SL_2(\ZZ/\ell\ZZ)$ for every prime $\ell\geq 5$.  The lemma follows by combining Theorem~\ref{T:MasserWustholz} and Lemma~\ref{L:j bound}.  
\end{proof}

\begin{lemma} \label{L:exceptional prime bound 1}
Assume that $5\leq \ell \leq c_k (\log x)^\gamma$, where $c_k$ and $\gamma$ are the constants from Lemma~\ref{L:MasserWustholz}.  Then
\[ 
\frac{|B_{k,\ell}(x)|}{|B_k(x)|} \ll_{k,\norm{\cdot}} \frac{ (\log x)^{7\gamma+1} }{x^{[k:\QQ]/2}}.
\]
\end{lemma}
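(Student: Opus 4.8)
The plan is to run the large sieve of Proposition~\ref{P:nasty sieving}, but to choose, for each $\ell$, a determinant $d_0 \bmod \ell$ by the pigeonhole principle, so as to sieve by conjugacy classes of that single determinant. This is what makes the argument effective: sieving by determinant-$1$ classes would require a lower bound, uniform in $\ell$, for the number of degree-one primes $\p$ with $N(\p)\equiv1\pmod\ell$, i.e.\ a Siegel--Walfisz input, whereas a pigeonholed determinant only needs the (effective, fixed-field) prime ideal theorem for $k$. Since for $\ell$ below any fixed bound the desired estimate is already contained in Proposition~\ref{P:exceptional prime bound m} (whose implied constant, for a bounded set of $\ell$, is $\ll_{k,\norm{\cdot}}1$), I may assume $\ell$ is larger than a suitable absolute constant. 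The group-theoretic input I would use is the determinant-$d_0$ analogue of the criterion behind Lemma~\ref{L:SL condition for m}: by Dickson's classification of the subgroups of $\GL_2(\FF_\ell)$ with $\ell\ge5$, a subgroup $H\le\GL_2(\FF_\ell)$ that meets every conjugacy class of $\GL_2(\FF_\ell)$ of one fixed determinant $d_0\in\FF_\ell^\times$ must contain $\SL_2(\FF_\ell)$ --- one checks that $H$ then contains a nonsplit element of determinant $d_0$ with nonzero trace and with image of order $>5$ in $\PGL_2(\FF_\ell)$ (these exist once $\ell$ is large, for every $d_0$), which rules out the reducible subgroups, the split Cartan normalizers and the exceptional subgroups, as well as a split regular semisimple element of determinant $d_0$ with nonzero trace, ruling out the nonsplit Cartan normalizers. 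Contrapositively, if $\rho_{E,\ell}(\calG_k)\not\supseteq\SL_2(\FF_\ell)$ then $\rho_{E,\ell}(\calG_k)$ misses some conjugacy class of determinant $d_0$ for \emph{every} $d_0$, so for any fixed $d_0$ one gets
\[
B_{k,\ell}(x)\ \subseteq\ \bigcup_{\det C=\{d_0\}} Y_C(x),
\]
a union over the at most $\ell+O(1)$ conjugacy classes $C$ of $\GL_2(\FF_\ell)$ of determinant $d_0$.

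Next I would choose $d_0$. Set $Q:=x^{[k:\QQ]/2}$. By the prime ideal theorem for the fixed field $k$ (equivalently, Chebotarev for the Galois closure of $k/\QQ$, whose constant is effective since $k$ is fixed), the number of degree-one primes $\p$ of $\OO_k$ with $N(\p)\le Q$ and $\p\nmid\ell$ is $\gg_k Q/\log Q$. Distributing these primes among the $\ell-1$ residues modulo $\ell$ and applying the pigeonhole principle yields some $d_0\in(\ZZ/\ell\ZZ)^\times$ with
\[
|\Sigma_k^1(Q;d_0,\ell)|\ \gg_k\ \frac{Q}{\ell\log Q}\ \gg_k\ \frac{x^{[k:\QQ]/2}}{\ell\log x}.
\]
In particular $\Sigma_k^1(Q;d_0,\ell)$ is nonempty; choosing $\p$ in it, $\chi_k(\Frob_\p)\equiv N(\p)\equiv d_0\pmod\ell$, so $d_0\in\chi_k(\calG_k)\bmod\ell$ and Proposition~\ref{P:nasty sieving} is applicable with this $d_0$.

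Finally I would feed this into Proposition~\ref{P:nasty sieving}: for each conjugacy class $C$ of $\GL_2(\FF_\ell)$ with $\det C=\{d_0\}$, that proposition with $m=\ell$ and $d=d_0$ gives
\[
\frac{|Y_C(x)|}{|B_k(x)|}\ \ll_{k,\norm{\cdot}}\ \Big(\tfrac{|C|}{|\SL_2(\FF_\ell)|}\,|\Sigma_k^1(Q;d_0,\ell)|+O_k(\ell^5 x^{[k:\QQ]/4})\Big)^{-1}.
\]
Using only $|C|\ge1$ and $|\SL_2(\FF_\ell)|=\ell^3-\ell$, the main term is $\gg_k x^{[k:\QQ]/2}/(\ell^4\log x)$; and since $5\le\ell\le c_k(\log x)^\gamma$, the error $O_k(\ell^5 x^{[k:\QQ]/4})$ is dominated by the main term once $x$ is large in terms of $k$ (it suffices that $x^{[k:\QQ]/4}\gg_k(\log x)^{9\gamma+1}$, which holds since the left side beats any power of $\log x$). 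Hence $|Y_C(x)|/|B_k(x)|\ll_{k,\norm{\cdot}}\ell^4\log x/x^{[k:\QQ]/2}$, and summing over the at most $\ell+O(1)$ classes of determinant $d_0$,
\[
\frac{|B_{k,\ell}(x)|}{|B_k(x)|}\ \ll_{k,\norm{\cdot}}\ \frac{\ell^5\log x}{x^{[k:\QQ]/2}}\ \ll_{k,\norm{\cdot}}\ \frac{(\log x)^{7\gamma+1}}{x^{[k:\QQ]/2}},
\]
where in the last step $\ell\le c_k(\log x)^\gamma$ is used and the factor $c_k^5$ is absorbed into the remaining $(\log x)^{2\gamma}$ of slack. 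Combined with Proposition~\ref{P:exceptional prime bound m} for the finitely many small $\ell$, this gives a constant uniform over the whole range $5\le\ell\le c_k(\log x)^\gamma$.

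The step I expect to be the real obstacle is the group theory of the first paragraph: establishing, uniformly in $\ell$, that a subgroup meeting every conjugacy class of one prescribed determinant already contains $\SL_2(\FF_\ell)$, which requires Dickson's classification together with a verification that suitable nonsplit and split regular semisimple classes of every determinant exist for all sufficiently large $\ell$ (the small cases being deferred to Proposition~\ref{P:exceptional prime bound m}). Everything after that is the pigeonhole trick --- whose entire purpose is to free us from choosing the residue class in advance, hence from Siegel--Walfisz --- together with routine bookkeeping of the powers of $\ell$, which goes through precisely because $\ell$ is only of size $(\log x)^{O(1)}$ while the large sieve saves a power of $x$.
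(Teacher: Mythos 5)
Your proposal is correct and follows essentially the same route as the paper: pigeonhole on a determinant class $d$ so that only the effective prime ideal theorem for $k$ (and not Siegel--Walfisz) is needed, sieve by the conjugacy classes of that single determinant via Proposition~\ref{P:nasty sieving}, and then do the bookkeeping in $\ell\le c_k(\log x)^\gamma$. The group-theoretic step you flag as the main obstacle is exactly Lemma~\ref{L:SL condition for ell} of the appendix, which is already stated for an arbitrary fixed determinant $d$ and proved there for all $\ell$ via Serre's Proposition~15 and a counting argument rather than Dickson's classification; your sharper count of $\ell+O(1)$ conjugacy classes per determinant (versus the paper's crude $n\le\ell^3$) would in fact yield the slightly better exponent $5\gamma+1$.
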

\begin{proof}
We may assume that $\ell$ satisfies $k\cap\QQ(\mu_\ell)=\QQ$ (this excludes only a finite number of $\ell$, which can be handled with Proposition~\ref{P:exceptional prime bound m}).  
Define the set $\Sigma^1_k(x) := \{ \p \in \Sigma_k(x) : N(\p) \text{ is prime}\}$.
By the pigeonhole principle, there is an element $d\in (\ZZ/\ell\ZZ)^\times = \chi_k(\calG_k) \bmod{\ell}$ such that 
\[
|\Sigma_{k}^1(x^{[k:\QQ]/2};d,\ell)|  \geq \frac{1}{\ell-1}|\Sigma^1_k(x^{[k:\QQ]/2})| +O_k(1) \gg_k \frac{1}{\ell-1} x^{[k:\QQ]/2}/\log x.
\]
Let $C_1,\dots, C_n$ be the conjugacy classes of $\GL_2(\ZZ/\ell\ZZ)$ with $\det(C_i)=d$.  Combining Lemma~\ref{L:SL condition for ell} and Proposition~\ref{P:nasty sieving}, we have
\begin{align*}
\frac{|B_{k,\ell}(x)|}{|B_k(x)|} \leq  \sum_{i=1}^n \frac{|Y_{C_i}(x)|}{|B_k(x)|}
&\ll_{k,\norm{\cdot}} \sum_{i=1}^n \left(\frac{|C_i|}{|\SL_2(\ZZ/\ell\ZZ)|} |\Sigma_{k}^1(x^{[k:\QQ]/2};d,\ell)| + O_k(\ell^5x^{[k:\QQ]/4}) \right)^{-1} \\
&\ll_{k,\norm{\cdot}} \sum_{i=1}^n \left(\frac{|C_i|}{|\GL_2(\ZZ/\ell\ZZ)|} x^{[k:\QQ]/2}/\log x + O_k(\ell^5x^{[k:\QQ]/4})\right)^{-1}.   
\end{align*}
The bounds $n\leq \ell^3$, $1\leq |C_i|$, and $\ell\leq c_k(\log x)^\gamma$ imply
\[
\frac{|B_{k,\ell}(x)|}{|B_k(x)|}  \ll_{k,\norm{\cdot}} n \ell^4 \frac{\log x}{x^{[k:\QQ]/2}} \ll_k \frac{(\log x)^{7\gamma+1}}{x^{[k:\QQ]/2}}. \qedhere
\]
\end{proof}

\subsection{Proof of Proposition~\ref{P:l-adic surjectivity}}
Using Lemmas~\ref{L:MasserWustholz} and \ref{L:exceptional prime bound 1}, we obtain the following bounds: 
\[
\frac{|\bigcup_{\ell \geq 5} B_{k,\ell}(x) |}{|B_k(x)|}  \leq   \sum_{5\leq \ell \leq c_k (\log x)^\gamma} \frac{|B_{k,\ell}(x)|}{|B_k(x)|} 
 \ll _{k,\norm{\cdot}}  \sum_{5\leq \ell \leq c_k (\log x)^\gamma} \frac{(\log x)^{7\gamma+1}}{ x^{[k:\QQ]/2}} 
 \ll_k   \frac{(\log x)^{8\gamma + 1}}{ x^{[k:\QQ]/2}}.
\]
By Proposition~\ref{P:exceptional prime bound m} (with $m=4$ and $9$), we have
\[
\frac{|B_{k,4}(x) \cup B_{k,9}(x)|}{|B_k(x)|}  \ll_{k,\norm{\cdot}} \frac{\log x}{x^{[k:\QQ]/2}}.
\]
The proposition follows immediately with $\beta=8\gamma+1$.

\section{Discriminants} \label{S:discriminants}

\begin{prop} \label{P:Serre obstruction bound}  
Fix a number field $k\neq \QQ$, an integer $r\geq 2$, and assume that $k$ contains $\mu_r$.  Then
\[
\frac{|\{ (a,b) \in B_k(x) :  \sqrt[r]{\Delta_{a,b}} \in k^\cyc  \}|}{|B_k(x)|} \ll_{k,\norm{\cdot},r}  \frac{\log x}{\sqrt{x}}.
\]
\end{prop}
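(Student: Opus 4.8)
The plan is to apply the large sieve of Theorem~\ref{T:large sieve} with $K=\QQ$ and $\Lambda=\OO_k^2$, sieving out those $(a,b)$ for which $\sqrt[r]{\Delta_{a,b}}$ lies in $k^\cyc$. The key observation is a local-to-global one: if $\sqrt[r]{\Delta_{a,b}}\in k^\cyc$, then for every prime $\p$ of $k$ of degree $1$ lying over a rational prime $p$, the element $\Delta_{a,b} \bmod \p$ should be forced to be an $r$-th power in $\FF_\p^\times$ (or at least satisfy a strong congruence constraint), because the Frobenius at $\p$ acts trivially on $\sqrt[r]{\Delta_{a,b}}$ once the relevant roots of unity are already in $k$ — and here they are, by the hypothesis $\mu_r\subseteq k$, which is exactly what makes $k(\sqrt[r]{\Delta_{a,b}})/k$ a Kummer (abelian) extension. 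So the first step is to show: for all but finitely many degree-$1$ primes $p$ (those in a suitable residue class, so that $\FF_\p$ contains $\mu_r$, i.e.\ $p\equiv 1 \bmod r$ essentially, which is automatic since $\mu_r \subseteq k$), the reduction $\Delta_{a,b}\bmod \p$ lies in $(\FF_\p^\times)^r$, and that this condition is a \emph{proper} subset — it cuts out a fraction roughly $1/r$ (more precisely $\tfrac{1}{r}$ of the nonzero residues) of $\FF_\p^2 \setminus \{\Delta_{a,b}=0\}$, so we may take $\omega_\p$ bounded below by a positive constant depending only on $r$.

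Concretely, I would set, for each rational prime $p$ splitting completely in $k$ (with $\p$ one choice of prime above it) with $p$ large,
\[
\Omega_\p = \{(r',s')\in\FF_\p^2 : \Delta_{r',s'}\neq 0,\ \Delta_{r',s'}\notin (\FF_\p^\times)^r\},
\]
and $\omega_\p = |\Omega_\p|/p^2$. A routine point count — the polynomial $\Delta_{a,b}=-16(4a^3+27b^2)$ takes each nonzero value on roughly $p$ pairs $(a,b)$, uniformly — gives $\omega_\p = \tfrac{r-1}{r} + O(1/\sqrt{p})$, hence $\omega_\p \gg_r 1$ for $p$ large. For the remaining primes set $\omega_\p=0$. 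If $\sqrt[r]{\Delta_{a,b}}\in k^\cyc$, then the degree-$1$ primes $\p$ of good reduction split completely in $k(\sqrt[r]{\Delta_{a,b}})$ as well (for a density-one set of such $\p$, by Chebotarev applied to this finite abelian extension — but here I want it for \emph{all} but finitely many, which follows because $k(\sqrt[r]{\Delta_{a,b}})\subseteq k^\cyc$ means it is contained in $k(\mu_n)$ for some $n$, so splitting of $\p$ is governed purely by the residue class of $p$), so $\Delta_{a,b}\bmod\p \in (\FF_\p^\times)^r$; that is, the reduction of $(a,b)$ avoids $\Omega_\p$. Therefore the set $Y$ to which the sieve is applied satisfies $Y_\p \subseteq \FF_\p^2\setminus\Omega_\p$, i.e.\ $|Y_\p|\leq(1-\omega_\p)|\Lambda/\p\Lambda|$.

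Then the large sieve gives $|Y| \ll_{k,\norm{\cdot},r} (x^{2[k:\QQ]}+Q^{2\cdot 2[k:\QQ]})/L(Q)$, and choosing $Q = x^{1/2}$ (so $Q^{4[k:\QQ]} = x^{2[k:\QQ]}$ matches the main term, as in Proposition~\ref{P:nasty sieving}), together with $L(Q) \gg_{r} \sum_{p\leq Q,\, p \text{ split in }k} 1/p \gg_k \log\log Q$ — actually I want the stronger lower bound coming from summing $\omega_\p/(1-\omega_\p) \gg_r 1$ over \emph{all} squarefree $\aA$ up to $Q$ composed of such primes, which by the standard argument (as in \cite{SerreMordellWeil}*{\S12}) gives $L(Q) \gg_{k,r} Q/\log Q = x^{1/2}/\log x$. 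Dividing by $|B_k(x)| \sim \kappa x^{2[k:\QQ]}$ yields the claimed bound $\ll_{k,\norm{\cdot},r} (\log x)/\sqrt{x}$. The main obstacle I anticipate is the bookkeeping in the first step: one must handle the finitely many ``bad'' primes (those ramified in $k$, or dividing $r$, or of bad reduction, or where the point count degenerates) and, more delicately, make precise the claim that $k(\sqrt[r]{\Delta_{a,b}})\subseteq k^\cyc$ forces the splitting of \emph{all} but finitely many degree-$1$ primes — this is where the hypothesis $\mu_r\subseteq k$ (making the extension Kummer, hence abelian, hence inside some $k(\mu_n)$) is essential, and where the case $k=\QQ$ would genuinely fail since there $\QQ^\cyc=\QQ^\ab$ swallows every quadratic (and every abelian) extension.
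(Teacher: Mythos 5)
The central step of your argument --- that $\sqrt[r]{\Delta_{a,b}}\in k^\cyc$ forces $\Delta_{a,b}\bmod\p$ to be an $r$-th power in $\FF_\p^\times$ for all but finitely many degree-one primes $\p$ --- is false, and the proof collapses there. Containment $k(\sqrt[r]{\Delta})\subseteq k(\mu_n)$ does imply that the splitting of $\p$ in $k(\sqrt[r]{\Delta})$ is governed by the residue class of $N(\p)$ modulo $n$, but only those classes lying in the subgroup of $(\ZZ/n\ZZ)^\times$ cut out by $k(\sqrt[r]{\Delta})$ give complete splitting; this is a positive-density condition, not an almost-all condition. Concretely, take $r=2$ and $\Delta=5$ with $\sqrt 5\notin k$: then $\sqrt{\Delta}\in k(\mu_5)\subseteq k^\cyc$, yet $5$ is a square modulo $\p$ for only half of the degree-one primes $\p$. (If your local claim were true, it would show that any $\Delta$ with $\sqrt[r]{\Delta}\in k^\cyc$ is an $r$-th power modulo almost every prime, hence essentially an $r$-th power globally --- absurd, since for $k=\QQ$, $r=2$ \emph{every} $\Delta$ satisfies $\sqrt{\Delta}\in\QQ^\cyc$.) As a result the sieve inclusion $Y_\p\subseteq\FF_\p^2\setminus\Omega_\p$ fails: with your $\Omega_\p$ you would actually be bounding the set of $(a,b)$ for which $\Delta_{a,b}$ is an $r$-th power, not the set in the proposition. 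A further warning sign is that nothing in your argument uses $k\neq\QQ$ in an essential way, while the statement is genuinely false for $k=\QQ$.

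The correct global consequence of $\sqrt[r]{\Delta}\in k^\cyc$, and the one the paper extracts, is not that $\Frob_\p$ fixes $\sqrt[r]{\Delta}$ but that its action is \emph{the same} at all primes $\p_1,\dots,\p_d$ of $k$ above a given rational prime $p$, because $k(\sqrt[r]{\Delta})=L\cdot k$ with $L\subseteq\QQ^\cyc$ Galois over $\QQ$. Via the ramification argument of Lemma~\ref{L:ramification cyclotomic} and Lemma~\ref{L:beta description}, this forces $\Delta=m\alpha^r\beta$ with $m$ a \emph{rational} integer, $\alpha\in\OO_S$, and $\beta$ ranging over a finite set of $S$-units. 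The sieve condition at a prime $p$ split in $k$ is then that the $d$ residues $\Delta\bmod\p_1,\dots,\Delta\bmod\p_d$ all lie in cosets of the $r$-th powers determined by a single class $m\bmod p$ --- a condition of density $1/r^{d-1}+O_{r,d}(p^{-1/2})$, which is nontrivial precisely because $d=[k:\QQ]\geq 2$. Your large-sieve bookkeeping (the choice $K=\QQ$, $\Lambda=\OO_k^2$, $Q=\sqrt{x}$, and the bound $L(\sqrt{x})\gg\sqrt{x}/\log x$ from $\omega_p\gg_{r,d}1$) is fine once it is attached to this correct local condition.
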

\begin{remark}
From Proposition~\ref{P:Serre obstruction bound}, we find that conditions (\ref{I:c}) and (\ref{I:d}) of Proposition~\ref{P:Criterion} hold for ``most'' elliptic curves over a fixed number field $k\neq \QQ$.
\end{remark}

For the rest of this section, we shall fix $k$ and $r$ as in Proposition~\ref{P:Serre obstruction bound}.  Let $d=[k:\QQ]$.  Let $S$ be the finite set of rational primes which satisfies the following conditions with minimal value $\prod_{p\in S} p$;
\begin{itemize}
\item $S$ contains the primes dividing $6r$, 
\item $S$ contains the primes that are ramified in $k$,
\item $\OO_S$ is a principal ideal domain, where $\OO_S$ is the ring of $S'$-integers of $k$ and $S'=\{\p \in \Sigma_k : \p | p,\, \text{ for some } p\in S\}$.
\end{itemize}
Note that the above choice of $S$ depends only on $k$ and $r$.

\begin{lemma}  \label{L:ramification cyclotomic}
Fix a prime $p \notin S$, an element $\Delta\in k^\times$, and let $\Pp_1\dots,\Pp_n$ be the prime ideals of $\OO_{k(\sqrt[r]{\Delta})}$ lying over $p$.  If $\sqrt[r]{\Delta} \in k^\cyc$, then
\[
e(\Pp_1/p) = \dots = e(\Pp_n/p), 
\]
where $e(\Pp_i/p)$ is the ramification index of $\Pp_i$ over $p$.
\end{lemma}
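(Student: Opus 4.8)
The plan is to exploit the fact that $k^{\cyc}/k$ is an abelian extension, unramified outside the primes dividing the conductor — but more to the point, the key structural feature of cyclotomic extensions is that the decomposition behaviour of a rational prime $p$ (unramified in $k$, so $p\notin S$) in any abelian extension of $\QQ$ is governed purely by Frobenius. Concretely, since $p\notin S$, $p$ is unramified in $k$; and since $\mu_r\subseteq k$ with $p\nmid r$, the extension $k(\sqrt[r]{\Delta})/k$ is a Kummer extension of exponent dividing $r$. The idea is to show that if $\sqrt[r]{\Delta}\in k^{\cyc}$, then the ramification in $k(\sqrt[r]{\Delta})/k$ at primes above $p$ is ``uniform'', and then transfer this uniformity down to ramification over $p$ itself.

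**Key steps.** First I would reduce to a single prime $\p$ of $\OO_k$ above $p$: because $p\notin S$ is unramified in $k$, for each $\Pp_i\mid p$ we have $e(\Pp_i/p)=e(\Pp_i/\p_i)$ where $\p_i=\Pp_i\cap\OO_k$. So it suffices to show that $e(\Pp_i/\p_i)$ is independent of $i$ — equivalently, that the ramification index of $k(\sqrt[r]{\Delta})/k$ at a prime $\p\mid p$ of $\OO_k$ depends only on $p$, not on the choice of $\p\mid p$. Second, since $\sqrt[r]{\Delta}\in k^{\cyc}$, the field $L:=k(\sqrt[r]{\Delta})$ is contained in $k\cdot\QQ(\mu_N)$ for some $N$; I would take $N$ with $p\nmid N$ (we may, since $p$ is unramified in $L/k$ once... — actually one must be slightly careful: a priori $L/k$ could be ramified at $\p$, but the point is that cyclotomic ramification at $p$ is ``the same at every $\p\mid p$''). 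The cleanest route: $L\subseteq k\QQ(\mu_N)$, and $e(\Pp_i/\p_i)$ divides the ramification index of $k\QQ(\mu_N)/k$ at $\p_i$, which divides the ramification index of $\QQ(\mu_N)/\QQ$ at $p$ — and the latter is a single number $e_p$ depending only on $p$ and $N$. More precisely, I would argue that $e(\Pp_i/\p_i)$ equals the order of a specific quotient determined by the $p$-part of $N$ and the local behaviour, and this is visibly independent of $i$ because $\QQ(\mu_N)_{\p}$ depends only on $p$. Alternatively and perhaps most transparently: inertia groups at the various $\p_i\mid p$ inside $\Gal(k\QQ(\mu_N)/k)\hookrightarrow (\ZZ/N\ZZ)^\times$ are all \emph{equal} (they are the inertia at $p$ in $(\ZZ/N\ZZ)^\times$, which is canonical), hence their images in $\Gal(L/k)$ are equal, so the ramification indices $e(\Pp_i/\p_i)$ — which are the orders of these images — coincide.

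**Main obstacle.** The delicate point is handling the interaction between $k$ and the cyclotomic field: $\Gal(k\QQ(\mu_N)/k)$ is a \emph{subgroup} of $(\ZZ/N\ZZ)^\times\cong\Gal(\QQ(\mu_N)/\QQ)$ (via restriction, using $k\cap\QQ(\mu_N)$), and I need to check that the inertia subgroup at a prime $\p\mid p$ of $k$ is exactly the intersection of this subgroup with the inertia at $p$ in $(\ZZ/N\ZZ)^\times$ — this uses that $p$ is unramified in $k$ (hence $k_\p\cap\QQ(\mu_N)_p$ is unramified over $\QQ_p$, so inertia is unaffected by restricting from $\QQ(\mu_N)$ to $k\QQ(\mu_N)$ at $\p$). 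Given $p\notin S$, this goes through. Then the image of this inertia group under the surjection $\Gal(k\QQ(\mu_N)/k)\twoheadrightarrow\Gal(L/k)$ has order equal to $e(\Pp_i/\p_i)$, and since the inertia group itself was the \emph{same} subgroup for every $i$ (it is intrinsic to $p$, not to $\p_i$), its image is the same, giving $e(\Pp_1/p)=\dots=e(\Pp_n/p)$. I would write this up invoking standard ramification theory in abelian/cyclotomic extensions and being explicit about why $p\notin S$ (in particular $p$ unramified in $k$, $p\nmid 6r$) is exactly what makes the localizations at different $\p\mid p$ match up.
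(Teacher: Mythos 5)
Your proposal is correct and takes essentially the same route as the paper: both arguments reduce to the fact that, because $p\notin S$ is unramified in $k$, the ramification above $p$ in $k(\sqrt[r]{\Delta})/k$ is inherited from a cyclotomic extension of $\QQ$, where it is uniform across all primes over $p$. The paper phrases this by descending to a field $L\subseteq\QQ^{\cyc}$ with $k(\sqrt[r]{\Delta})=L\cdot k$ and invoking that all primes of the Galois extension $L/\QQ$ above $p$ share the same ramification index, whereas you ascend to $k\QQ(\mu_N)$ and track the (canonical) inertia subgroup at $p$ inside $(\ZZ/N\ZZ)^\times$; the content is the same, and your version in fact fills in the details behind the paper's ``one can show'' and its assertion that $e(\Pp_i/p)=e(\Pp_i\cap\OO_L/p)$.
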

\begin{proof}
Since $\sqrt[r]{\Delta} \in k^\cyc= \QQ^\cyc\cdot k$,  one can show that there is a field $L \subseteq \QQ^\cyc$ such that $k(\sqrt[r]{\Delta}) = L \cdot k$.   Since $p$ is unramified in $k$, we find that $e(\Pp_i/p) = e(\Pp_i \cap \OO_L/p)$.  The value $e(\Pp_i \cap \OO_L/p)$ is independent of $i$, since $L$ is a Galois extension of $\QQ$.
\end{proof}

\begin{lemma} \label{L:beta description}
Let $\calB \subseteq \OO_S^\times$ be a set of representatives for the cosets of $\OO_S^\times/(\OO_S^\times)^r$.  Then for any $\Delta \in \OO_k$ with $\sqrt[r]{\Delta} \in k^\cyc$, there are $m \in \ZZ$, $\alpha \in \OO_S$, and $\beta \in \calB$ such that $\Delta= m \alpha^r \beta$.
\end{lemma}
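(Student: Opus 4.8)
The plan is to make the factorization $\Delta = m\alpha^r\beta$ completely explicit by working place by place over the primes not in $S$, using Lemma~\ref{L:ramification cyclotomic} to control how $\Delta$ factors, and then absorbing the primes in $S$ into the $\OO_S^\times/(\OO_S^\times)^r$-bookkeeping. First I would observe that, since $\OO_S$ is a PID, every element of $k^\times$ can be written as a unit times a product of prime elements lying over primes outside $S$; so it suffices to understand $\ord_\Pp$ of $\Delta$ at each prime $\Pp$ of $\OO_k$ lying over a rational prime $p\notin S$. Fix such a $p$. The point of Lemma~\ref{L:ramification cyclotomic} is that in $k(\sqrt[r]{\Delta})$ all primes over $p$ share a common ramification index $e$; by Kummer theory (here using $\mu_r\subseteq k$ and $p\nmid r$, so $p$ is tamely ramified) the ramification index of a prime of $k(\sqrt[r]{\Delta})$ over a prime $\Pp$ of $k$ is $r/\gcd(r,\ord_\Pp(\Delta))$. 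Since $p$ is unramified in $k$, the common value of $e(\Pp_i/p)$ forces $\gcd(r,\ord_\Pp(\Delta))$ to be the same integer $g_p$ for every $\Pp\mid p$. In particular, writing $\ord_\Pp(\Delta) = g_p q_\Pp$, the exponent $q_\Pp$ is coprime to $r/g_p$.

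The next step is to extract a common $r$-th power. Set $e_p := r/g_p$. For each $\Pp\mid p$ with $p\notin S$ we have $\ord_\Pp(\Delta)\equiv g_p q_\Pp \pmod{r}$ with $\gcd(q_\Pp, e_p)=1$; I claim one can choose a single integer $c_p$ (depending only on $p$, not on $\Pp$) so that $\ord_\Pp(\Delta) - c_p$ is divisible by... no — here the cleaner route is the one the authors have set up: collect the ``$g_p$ part'' of $\Delta$ at all primes over $p$ into a rational integer and an $\OO_S$-ideal that is already an $r$-th power. Concretely, let $p^{v_p}$ be the exact power such that $v_p \cdot e(\Pp/p) \equiv \ord_\Pp(\Delta) \pmod{r}$ at one (hence every) $\Pp\mid p$; since $p$ is unramified this is just $v_p\equiv g_p q_\Pp$, and consistency of $g_p$ across the $\Pp$'s together with $\gcd(q_\Pp,e_p)=1$ lets us pick $v_p\in\{0,1,\dots,r-1\}$ once and for all. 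Then $\Delta / p^{\sum_p v_p}$ has, at every prime $\Pp$ over every $p\notin S$, order divisible by... I should be careful: it is $\equiv 0 \pmod{\gcd}$, and to get divisibility by $r$ I instead argue that $\Delta\cdot p^{-v_p}$ is an $r$-th power in $k_\Pp^\times$ up to a unit, hence the fractional ideal $(\Delta)\prod_{p\notin S}(p)^{-v_p}$ is, away from $S$, the $r$-th power of an $\OO_S$-ideal; since $\OO_S$ is a PID this ideal is $(\alpha^r)$ for some $\alpha\in\OO_S$. Writing $m := \prod_{p\notin S} p^{v_p}\in\ZZ$ we get $\Delta = m\alpha^r u$ for some $u\in\OO_S^\times$, and replacing $\alpha$ by $\alpha$ times the $r$-th-power part of $u$ leaves $\Delta = m\alpha^r\beta$ with $\beta\in\calB$, as desired. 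Multiplying $m$ by the appropriate element of $\{1,\dots,r\}$ — or rather noting $\Delta\in\OO_k$ so no denominators survive — keeps $m$ an integer and $\alpha\in\OO_S$.

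The main obstacle is the second step: verifying that the local Kummer-theoretic exponent $v_p$ can be chosen \emph{uniformly over all primes $\Pp$ of $k$ above a given $p$}, which is exactly where Lemma~\ref{L:ramification cyclotomic} is used and where one must be careful that ``$p$ unramified in $k$'' (guaranteed by $p\notin S$) converts the equality of ramification indices $e(\Pp_i/p)$ into the needed equality of the local invariants $\gcd(r,\ord_{\Pp_i}(\Delta))$. The rest — PID-ness of $\OO_S$ to trivialize the resulting ideal, and the coset representatives $\calB$ to normalize the leftover unit — is routine bookkeeping. One should also double-check the degenerate cases $\ord_\Pp(\Delta)=0$ (then $v_p=0$, no contribution) and make sure the finitely many $p\in S$ contribute only to $\alpha$ and $\beta$, never to $m$; this is automatic since $\OO_S$ contains all such primes as units after inverting $S'$.
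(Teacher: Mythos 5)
Your overall strategy parallels the paper's (control $\ord_\Pp(\Delta)\bmod r$ at the primes over each $p\notin S$ via Lemma~\ref{L:ramification cyclotomic}, then use that $\OO_S$ is a PID), but the step you yourself flag as ``the main obstacle'' is a genuine gap, and it cannot be repaired. Tame Kummer theory together with Lemma~\ref{L:ramification cyclotomic} gives you only that $g_p:=\gcd(r,\ord_\Pp(\Delta))$ is independent of the prime $\Pp\mid p$; it does \emph{not} give you that the residue $\ord_\Pp(\Delta)\bmod r$ itself is independent of $\Pp$. Your assertion that ``consistency of $g_p$ across the $\Pp$'s together with $\gcd(q_\Pp,e_p)=1$ lets us pick $v_p$ once and for all'' is a non sequitur: for $r=4$, orders $1$ and $3$ at two primes over $p$ have the same gcd with $r$ and yield the same ramification index, yet no single $v_p$ works. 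And the conclusion $\Delta=m\alpha^r\beta$ really does force $\ord_\Pp(\Delta)\equiv\ord_p(m)\pmod{r}$ for \emph{every} $\Pp\mid p$, since $p\notin S$ is unramified in $k$ and $\beta$ is a unit. Only for $r=2$ does $\gcd(2,n)$ determine $n\bmod 2$, so only there does your argument close.

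The gap is unfixable because the statement itself fails for $r\geq 3$: Gauss sums give counterexamples. Take $k=\QQ(i)$, $r=4$ (so $S=\{2,3\}$), and let $\chi$ be a quartic character mod $5$; then $g(\chi)\in\ZZ[\zeta_{20}]\subseteq k^\cyc$ and $g(\chi)^4=5(1+2i)^2=(1+2i)^3(1-2i)=-15+20i=:\Delta\in\OO_k$, so $\sqrt[4]{\Delta}\in k^\cyc$, yet $\ord_{(1+2i)}(\Delta)=3$ and $\ord_{(1-2i)}(\Delta)=1$ are incongruent mod $4$, so no decomposition $\Delta=m\alpha^4\beta$ exists. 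Likewise $g(\chi)^3=p\,J(\chi,\chi)=\pi^2\bar\pi$ for a cubic character mod $p\equiv 1\pmod{3}$ defeats the case $r=3$, $k=\QQ(\mu_3)$. For what it is worth, the paper's own proof stumbles at the same place: the first application of Lemma~\ref{L:ramification cyclotomic} correctly yields $p\mid\Delta/\alpha^r$, but in the ``dividing by $p$ and repeating'' step, ramification at every $\p_j$ only rules out $\ord_{\p_j}(\Delta)\equiv 0\pmod{r}$ and no longer forces $\ord_{\p_j}(\Delta/(p\alpha^r))>0$. So do not try to patch the argument as written: either the hypothesis $\sqrt[r]{\Delta}\in k^\cyc$ must be exploited beyond ramification indices, or the exceptional set in Proposition~\ref{P:Serre obstruction bound} must be covered by a larger (but still sparse) family than the $W_\beta$.
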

\begin{proof}
Fix $\Delta \in \OO_k$ with $\sqrt[r]{\Delta} \in k^\cyc$.  We first show that $\Delta$ can be written in the form $m\alpha^r \beta$, for some $m\in \ZZ$, $\alpha \in \OO_S$, and $\beta \in \OO_S^\times$.  We may assume that $\Delta$ is non-zero.  Since $\OO_S$ is a principal ideal domain, there is an element $\alpha\in \OO_S$ such that $0\leq \ord_\p(\Delta/\alpha^r) < r$ for all $\p \not\in S'$.  

Take any prime $p\notin S$ and let $\p_1,\dots, \p_g$ be the prime ideals of $\OO_S$ lying over $p$.  Suppose that some $\p_i$ divides $\Delta/\alpha^r$ in $\OO_S$.  Since $0< \ord_\p(\Delta/\alpha^r) < r$, we deduce that the extension $k(\sqrt[r]{\Delta})/k$ is ramified at $\p_i$.  By Lemma~\ref{L:ramification cyclotomic}, we find that $k(\sqrt[r]{\Delta})/k$ is ramified at all the primes $\p_1,\dots,\p_g$, and hence $p\OO_S = \p_1\dots \p_g$ divides $\Delta/\alpha^r$ in $\OO_S$.    Dividing by $p$ and repeating the above process, we find that there is an integer $m\geq 1$ such that $\beta:=\Delta/(m\alpha^r)$ is an element of $\OO_S^\times$.  We may assume that $\beta$ is in $\calB$ after multiplying $\alpha$ by an appropriate element of $\OO_S^\times$.
\end{proof}

For each $\beta\in \OO_S^\times$, define the sets
\[
W_\beta := \{ (a,b) \in \OO_k^2:  \Delta_{a,b} = m \alpha^r \beta, \text{ for some } m \in \ZZ,\, \alpha \in \OO_S \}
\] 
and $W_\beta(x) := W_\beta \cap B_k(x)$.  For a set $\calB$ as in Lemma~\ref{L:beta description}, we have
\[
\{ (a,b) \in B_k(x) :  \sqrt[r]{\Delta_{a,b}} \in k^\cyc  \} \subseteq \bigcup_{\beta \in \calB} W_\beta(x).
\]
The set $\calB$ is finite (since the abelian group $\OO_S^\times$ is finitely generated), so
\begin{equation} \label{E: W bound for discriminant}
|\{ (a,b) \in B_k(x) :  \sqrt[r]{\Delta_{a,b}} \in k^\cyc  \}| \ll_{k,r} \max_{\beta \in \OO_S^\times} |W_\beta(x)|.
\end{equation}
Thus to prove Proposition~\ref{P:Serre obstruction bound}, it suffices to find bounds for the functions $|W_\beta(x)|$.

\begin{lemma} \label{L:Weil conjecture bound}
Let $p\nmid 6$ be a prime with $p\equiv 1 \bmod{r}$.  For any $\gamma\in \FF_p^\times$,
\[
|\{(a,b) \in \FF_p^2 :  \Delta_{a,b} = \gamma c^r, \, \text{for some $c\in\FF_p$}\}| =  \frac{1}{r} p^2 + O_{r}(p^{3/2}).
\]
\end{lemma}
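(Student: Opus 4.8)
The plan is to detect the property ``$\Delta_{a,b}$ is $\gamma$ times an $r$-th power'' by multiplicative characters on $\FF_p^\times$ and then estimate the resulting two-variable character sums by fibering over the value $4a^3+27b^2$ and applying the Weil bound to the fibers.

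First I would dispose of the locus where $\Delta_{a,b}=0$. Since $p\nmid 6$, the unit $-16$ is invertible mod $p$, so $\Delta_{a,b}=0$ if and only if $4a^3+27b^2=0$; this plane curve contains only $O(p)$ points of $\FF_p^2$, and every such pair trivially satisfies $\Delta_{a,b}=\gamma\cdot 0^r$. Hence, up to an error of $O(p)$, it suffices to count the pairs $(a,b)$ with $\Delta_{a,b}\neq 0$ for which $\Delta_{a,b}/\gamma$ is an $r$-th power in $\FF_p^\times$. Using $p\equiv 1\bmod r$, fix a multiplicative character $\chi$ of $\FF_p^\times$ of exact order $r$, and extend every multiplicative character by the value $0$ at $0$. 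Since $\mathbf 1_{u\in(\FF_p^\times)^r}=\tfrac1r\sum_{j=0}^{r-1}\chi^j(u)$ for $u\in\FF_p^\times$, the count in question is
\[
\frac1r\sum_{j=0}^{r-1}\chi^{-j}(\gamma)\sum_{(a,b)\in\FF_p^2}\chi^j(\Delta_{a,b}).
\]
The $j=0$ term equals $\tfrac1r\,\#\{(a,b):\Delta_{a,b}\neq 0\}=\tfrac1r p^2+O(p)$, which supplies the main term.

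For $1\le j\le r-1$ the character $\chi^j$ is nontrivial, and pulling out $\chi^j(-16)$ reduces the problem to bounding $\sum_{(a,b)}\chi^j(4a^3+27b^2)$. Writing $M(n)=\#\{(a,b)\in\FF_p^2:4a^3+27b^2=n\}$, this equals $\sum_{n\in\FF_p}\chi^j(n)M(n)$. I would compute $M(n)$ by summing over $a$: if $\psi$ denotes the quadratic character (with $\psi(0)=0$), then the number of $b$ with $27b^2=n-4a^3$ is $1+\psi\bigl(27^{-1}(n-4a^3)\bigr)$, so
\[
M(n)=p+\psi(27)\sum_{a\in\FF_p}\psi(n-4a^3).
\]
For $n\neq 0$ the cubic $n-4a^3$ is separable (its roots are the three distinct cube roots of $n/4$; this uses $p\nmid 6$), hence is not a scalar times a square of a polynomial, and Weil's bound gives $\bigl|\sum_a\psi(n-4a^3)\bigr|\le 2\sqrt p$; thus $M(n)=p+O(\sqrt p)$, while $M(0)=O(p)$. (Equivalently, $27b^2=n-4a^3$ is a smooth genus-one curve for $n\neq 0$ and one invokes Hasse–Weil.) Since $\chi^j(0)=0$ and $\sum_{n\in\FF_p}\chi^j(n)=0$,
\[
\sum_{n\in\FF_p}\chi^j(n)M(n)=\sum_{n\neq 0}\chi^j(n)\bigl(p+O(\sqrt p)\bigr)=p\cdot 0+(p-1)\cdot O(\sqrt p)=O(p^{3/2}).
\]
Summing over the $r-1$ nontrivial characters and adding the $O(p)$ contributions from the $\Delta_{a,b}=0$ locus and the $j=0$ term yields the desired equality $\tfrac1r p^2+O_r(p^{3/2})$, with the implied constant depending only on $r$.

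I do not expect a genuine obstacle here. The only substantive input is Weil's bound for one-variable character sums (equivalently the Hasse–Weil estimate for the fibers $27b^2=n-4a^3$), and the points requiring a little care are: that $p\equiv 1\bmod r$ is exactly the hypothesis that makes the character detection of $r$-th powers valid; that $p\nmid 6$ both keeps $-16,4,27$ invertible and guarantees the fibers over $n\neq 0$ are separable/smooth; and that the degenerate locus $\Delta_{a,b}=0$, of size $O(p)$, must be kept on the error side throughout.
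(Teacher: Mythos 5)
Your proof is correct, and it takes a genuinely different route from the paper's. The paper counts $\FF_p$-points on the affine surface $X\colon \Delta_{a,b}=\gamma c^r$ in $\AA^3$ directly: it asserts geometric irreducibility of $X$, invokes the Weil conjectures (with Bombieri's work cited to make the error $O_r(p^{3/2})$ effective and dependent only on $r$), and then passes from points of $X$ to pairs $(a,b)$ via the fact that each admissible $(a,b)$ with $c\neq 0$ has exactly $r$ preimages when $p\equiv 1\bmod r$. You instead detect ``$\Delta_{a,b}/\gamma$ is an $r$-th power'' with order-$r$ multiplicative characters, which isolates the main term $\tfrac1r p^2$ immediately from the trivial character, and you reduce the nontrivial-character contributions to one-variable Weil bounds $\bigl|\sum_a\psi(n-4a^3)\bigr|\le 2\sqrt p$ by fibering over $n=4a^3+27b^2$. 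Your route only needs the Hasse--Weil/Weil bound for curves rather than the higher-dimensional Weil conjectures, avoids having to check geometric irreducibility of a surface, and yields a completely explicit constant (roughly $2(r-1)p^{3/2}$ plus $O_r(p)$ terms); the paper's route is shorter to state but leans on heavier machinery for the same $O_r(p^{3/2})$ error. All the delicate points in your argument --- the role of $p\equiv 1\bmod r$ in the character detection, the separability of $n-4a^3$ for $n\neq 0$ using $p\nmid 6$, and keeping the $O(p)$ locus $\Delta_{a,b}=0$ on the error side --- are handled correctly.
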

\begin{proof}
Fix $\gamma\in \FF_p^\times$.  The equation $\Delta_{a,b} = \gamma c^r$ defines a geometrically irreducible variety $X$ in $\AA^3_{\FF_p}=\Spec(\FF_p[a,b,c])$.  Using the Weil conjectures, we find that
\begin{equation} \label{E:Weil conjecture}
|X(\FF_p)| = p^2 + O_r(p^{3/2})
\end{equation}
(that the implicit constant in (\ref{E:Weil conjecture}) depends only on $r$ can be deduced from \cite{Bombieri}).  

For fixed $(a,b)\in \FF_p^2$, if $ \Delta_{a,b} = \gamma c^r$ has a solution $c\in \FF_p^\times$, then it has exactly $r$ such solutions (this uses the assumption $p\equiv 1\bmod{r}$).  Most solutions have $c\neq 0$, since $|\{(a,b)\in\FF_p^2: \Delta_{a,b}=0\}| \ll p$.  The lemma is now immediate.
\end{proof}

\begin{lemma} \label{L:W mod p bound}
Take any  $\beta \in \OO_S^\times$.  Let $p \not\in S$ be a prime that splits completely in $k$, and let $W_{\beta,p}$ be the image of $W_\beta$ in $\OO_k^2/p\OO_k^2$.  Then
\[
|W_{\beta,p}| \leq \Big( \frac{1}{r^{d-1}} + O_{r,d}(p^{-1/2}) \Big) |\OO_k^2/p\OO_k^2|.
\]
\end{lemma}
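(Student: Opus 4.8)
The plan is to exploit the complete splitting of $p$ in $k$ to decompose $\OO_k^2/p\OO_k^2$ into a product of $d$ copies of $\FF_p^2$, push the defining congruence of $W_\beta$ down coordinate by coordinate, and then count using the Weil-type estimate of Lemma~\ref{L:Weil conjecture bound}.

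First I would record the structural input. Since $p\notin S$ is completely split in $k$, we have $p\OO_k=\p_1\cdots\p_d$ with each residue ring $\OO_k/\p_i$ canonically isomorphic to $\FF_p$, so the Chinese remainder theorem gives a ring isomorphism $\OO_k^2/p\OO_k^2\cong\prod_{i=1}^d\FF_p^2$, and in particular $|\OO_k^2/p\OO_k^2|=p^{2d}$. As $p$ is not inverted in $\OO_S$, reduction modulo each $\p_i$ is still defined on $\OO_S$, and $\beta\in\OO_S^\times$ maps to a unit $\bar\beta_i\in\FF_p^\times$; likewise any $\alpha\in\OO_S$ maps to some $\bar\alpha_i\in\FF_p$. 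Moreover $\mu_r\subseteq k$ forces $p\equiv 1\bmod r$, and $p\notin S$ forces $p\nmid 6r$, so Lemma~\ref{L:Weil conjecture bound} applies to $p$ for any $\gamma\in\FF_p^\times$.

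Next I would take $(a,b)\in W_\beta$, write $\Delta_{a,b}=m\alpha^r\beta$ with $m\in\ZZ$ and $\alpha\in\OO_S$, and reduce modulo each $\p_i$. The decisive point is that $m$ is a \emph{rational} integer, so its image $t:=m\bmod p\in\FF_p$ is the \emph{same} in all $d$ coordinates; since $\Delta_{a,b}$ is given by an integral polynomial in $(a,b)$, reduction gives $\Delta_{\bar a_i,\bar b_i}=t\,\bar\alpha_i^{\,r}\,\bar\beta_i$ for every $i$, and hence $\Delta_{\bar a_i,\bar b_i}\in t\bar\beta_i\{c^r:c\in\FF_p\}$. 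Now if $t\ne 0$, the set $t\bar\beta_i\{c^r:c\in\FF_p\}$ depends only on the class $\tau$ of $t$ in $\FF_p^\times/(\FF_p^\times)^r$, because multiplication by an $r$-th power permutes the $r$-th powers and fixes $0$; and if $t=0$, all the $\Delta_{\bar a_i,\bar b_i}$ vanish, so the tuple lies in $\prod_i\{(\bar a,\bar b):\Delta_{\bar a,\bar b}\in\tau\bar\beta_i\{c^r:c\in\FF_p\}\}$ for any fixed $\tau$. Consequently
\[
W_{\beta,p}\ \subseteq\ \bigcup_{\tau\in\FF_p^\times/(\FF_p^\times)^r}\ \prod_{i=1}^d N_i(\tau),\qquad N_i(\tau):=\{(\bar a,\bar b)\in\FF_p^2:\Delta_{\bar a,\bar b}\in\tau\bar\beta_i\{c^r:c\in\FF_p\}\},
\]
a union of only $r$ product sets.

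It remains to estimate each factor. Fixing $\tau$ and $i$ and choosing a representative $\gamma\in\FF_p^\times$ of the coset $\tau\cdot[\bar\beta_i]$ in $\FF_p^\times/(\FF_p^\times)^r$, the set $N_i(\tau)$ is exactly $\{(\bar a,\bar b)\in\FF_p^2:\Delta_{\bar a,\bar b}=\gamma c^r\ \text{for some}\ c\in\FF_p\}$, which by Lemma~\ref{L:Weil conjecture bound} has cardinality $\tfrac1r p^2+O_r(p^{3/2})$, uniformly in $\gamma$. Multiplying over the $d$ coordinates and summing over the $r$ cosets gives
\[
|W_{\beta,p}|\ \le\ r\Big(\tfrac1r p^2+O_r(p^{3/2})\Big)^{d}\ =\ \tfrac{1}{r^{d-1}}\,p^{2d}+O_{r,d}(p^{2d-1/2})\ =\ \Big(\tfrac{1}{r^{d-1}}+O_{r,d}(p^{-1/2})\Big)|\OO_k^2/p\OO_k^2|,
\]
which is the claimed bound. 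The argument is short, and the only step that needs care, and the only place one could go wrong, is the reduction from summing over all $p$ possible values of $t$ to summing over the $r$ cosets of $(\FF_p^\times)^r$: without recognising that each coordinate constraint factors through $\FF_p^\times/(\FF_p^\times)^r$ (and that the degenerate $t=0$ tuples are harmless) one would lose a whole factor of $p$ and overshoot the target. Everything else is routine bookkeeping with the Weil estimate.
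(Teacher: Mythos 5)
Your proposal is correct and follows essentially the same route as the paper: split $\OO_k^2/p\OO_k^2$ via CRT using the complete splitting of $p$, use the fact that $m$ is a rational integer (hence has the same reduction in every coordinate) to cover $W_{\beta,p}$ by a union over the $r$ cosets of $\FF_p^\times/(\FF_p^\times)^r$ of product sets, and bound each factor by Lemma~\ref{L:Weil conjecture bound}. Your explicit treatment of the $t=0$ case and of why the constraint factors through the coset is a small extra care the paper leaves implicit, but it is not a different argument.
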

\begin{proof}
Let $\p_1,\dots, \p_d \in \Sigma_k$ be the prime ideals lying over $p$.   By the Chinese remainder theorem, we have a natural identification $\OO_k/p\OO_k = \prod_{i=1}^d \FF_{\p_i}$.  Then 
\[
W_{\beta,p} \subseteq \bigcup_{ m \in R } \prod_{i=1}^d \big\{(a,b) \in \FF_{\p_i}^2 : \Delta_{a,b}= m \alpha^r \cdot (\beta \bmod{\p_i}),\, \text{for some } \alpha\in \FF_{\p_i}   \big\},
\]
where the union is over a set of coset representatives $R\subseteq \FF_p^\times$ of $\FF_p^\times/(\FF_p^\times)^r$.   We have $p\equiv 1 \bmod{r}$, since $p$ splits completely in $k$ and by assumption $\mu_r\subseteq k$.  By Lemma~\ref{L:Weil conjecture bound},
\[
|W_{\beta,p}| \leq |R| \big(p^2/r + O_{r}(p^{3/2})\big)^d  = p^{2d}/{r^{d-1}} + O_{r,d}(p^{2d-1/2}).  \qedhere
\]
\end{proof}

\begin{lemma} \label{L:W bound}
For $\beta\in \OO_S^\times$,
${|W_\beta(x)|} \ll_{k,\norm{\cdot},r} {|B_k(x)|} (\log x)/{\sqrt{x}}.$
\end{lemma}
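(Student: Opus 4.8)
The plan is to apply the large sieve of Theorem~\ref{T:large sieve} with the number field $K=\QQ$, the free $\ZZ$-module $\Lambda=\OO_k^2$ of rank $n:=2d$, our fixed norm $\norm{\cdot}$ on $\Lambda_\RR=\RR\otimes_\ZZ\OO_k^2\cong\RR^{2d}$, and the sieved set $Y:=W_\beta(x)$. First I would note that $W_\beta(x)\subseteq B_k(x)$ lies in the ball of radius $x$ about the origin, so hypothesis (1) of Theorem~\ref{T:large sieve} holds; the sieving parameter will be chosen as $Q:=\sqrt{x}$ at the end.

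For the local conditions, I would invoke Lemma~\ref{L:W mod p bound}: it provides a constant $c_0=c_0(r,d)>0$ such that for every prime $p\notin S$ splitting completely in $k$ one has $|W_{\beta,p}|\le (r^{-(d-1)}+c_0 p^{-1/2})\,|\OO_k^2/p\OO_k^2|$. For these $p$ I would set $\omega_p:=\max\{0,\,1-r^{-(d-1)}-c_0 p^{-1/2}\}$ and $\omega_p:=0$ for all other primes; since the reduction $Y_p$ of $Y$ modulo $p$ lands in $W_{\beta,p}$, hypothesis (2) holds for all $p\le Q$. The point where the hypothesis $k\neq\QQ$ (i.e.\ $d\ge 2$) enters is exactly here: it forces $r^{-(d-1)}\le r^{-1}\le 1/2$, so $\omega_p\in[0,1)$, and in fact $\omega_p\ge 1/4$ for all large $p$ that split completely in $k$. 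Hence $\omega_p/(1-\omega_p)\gg_{k,r}1$ for such $p$, and restricting the sum defining $L(Q)$ to prime ideals of $\ZZ$ gives
\[
L(Q)\ \ge\ \sum_{\substack{p\le Q\text{ prime},\ p\notin S \\ p\text{ splits completely in }k}} \frac{\omega_p}{1-\omega_p}\ \gg_{k,r}\ \#\{\,p\le Q : p\text{ splits completely in }k\,\}\ \gg_k\ \frac{Q}{\log Q},
\]
the final estimate coming from the Chebotarev density theorem, since the primes splitting completely in $k$ have positive density.

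Putting this into Theorem~\ref{T:large sieve} gives $|W_\beta(x)|\ll_{k,\norm{\cdot},r}(x^{2d}+Q^{4d})/L(Q)$; with $Q=\sqrt{x}$ the numerator is $\ll x^{2d}$ and $L(Q)\gg_{k,r}x^{1/2}/\log x$, so $|W_\beta(x)|\ll_{k,\norm{\cdot},r}x^{2d-1/2}\log x$. Dividing by $|B_k(x)|\sim\kappa x^{2d}$ from \eqref{E:box asymptotics} yields the asserted bound $|W_\beta(x)|\ll_{k,\norm{\cdot},r}|B_k(x)|(\log x)/\sqrt{x}$. I expect the only genuinely delicate point to be the lower bound on $L(Q)$, which hinges on the weights $\omega_p$ staying bounded away from $1$ — equivalently on $d\ge 2$ — everything else being a mechanical combination of Lemma~\ref{L:W mod p bound}, the large sieve, and the prime ideal theorem.
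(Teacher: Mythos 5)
Your proposal is correct and follows essentially the same route as the paper: both apply the large sieve over $K=\QQ$ with $\Lambda=\OO_k^2$, $Q=\sqrt{x}$, local densities supplied by Lemma~\ref{L:W mod p bound} at the primes splitting completely in $k$, the observation that $r\geq 2$ and $d\geq 2$ keep $\omega_p$ bounded away from $1$ so that $\omega_p/(1-\omega_p)\gg 1$, and Chebotarev to get $L(\sqrt{x})\gg_{k,r}\sqrt{x}/\log x$. The only cosmetic difference is that you truncate $\omega_p$ with a $\max\{0,\cdot\}$ and discard small primes explicitly, which is a harmless tidying of the paper's $O_{r,d}(p^{-1/2})$ bookkeeping.
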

\begin{proof}
Let $I$ be the set of primes $p\not\in S$ that split completely in $k$.  By Lemma~\ref{L:W mod p bound}, for each prime $p \in I$, we have $|W_\beta(x) \bmod{p\OO_k^2}| \leq (1-\omega_p) |\OO_k^2/p\OO_k^2|$, where $\omega_p = 1-1/r^{d-1} + O_{r,d}(p^{-1/2})$.  For $p\notin I$, set $\omega_p=0$.

We may now apply the large sieve.  By Theorem~\ref{T:large sieve} (with $K=\QQ$, $\Lambda=\OO_k^2$, $Q=\sqrt{x}$, and our chosen norm $\norm{\cdot}$ on $\RR\otimes_\ZZ \Lambda$), we have
$|W_\beta(x)| \ll_{k,\norm{\cdot}} { x^{2d}}/{L(\sqrt{x})},$
where 
\[
L(\sqrt{x}) := \sum_{ \substack{J \subseteq I \text{ finite} \\  \prod_{p\in J} p \leq \sqrt{x}} } \prod_{p\in J} \frac{\omega_p}{1-\omega_p}.
\]
Using $r\geq 2$ and $d\geq 2$ (since $k\neq \QQ$),  we have the bound
\[
L(\sqrt{x}) \geq \sum_{ \substack{J \subseteq I \text{ finite} \\  \prod_{p\in J} p \leq \sqrt{x} } } \prod_{p\in J} \Big( 1 + O_{r,d}(p^{-1/2}) \Big) \geq \sum_{ \substack{p\in I \\  p \leq \sqrt{x} } } \Big( 1 + O_{r,d}(p^{-1/2}) \Big).
\]
The set $I$ has positive density in the primes, so $L(\sqrt{x}) \gg_{r,k} \sqrt{x}/\log x$.  The lemma follows by using this bound for $L(\sqrt{x})$ and (\ref{E:box asymptotics}) with our upper bound for $|W_\beta(x)|$.
\end{proof}

\begin{proof}[Proof of Proposition~\ref{P:Serre obstruction bound}]
Apply Lemma~\ref{L:W bound} to the bound (\ref{E: W bound for discriminant}).
\end{proof}

\section{Elliptic curves with maximal Galois action} \label{S:proof}
\subsection{Proof of Theorem~\ref{T:main theorem 2}}
Define the sets
\begin{align*}
Y_1(x ) &= B_{k,4}(x) \cup B_{k,9}(x) \cup {\bigcup}_{\ell\geq 5}B_{k,\ell}(x),\\
Y_2(x) &= \{ (a,b) \in B_k(x) :    \sqrt{\Delta_{a,b}}  \in k^\cyc \}, \\
Y_3(x) &= \{ (a,b) \in B_k(x) :    \mu_3 \subseteq k \text{ and } \sqrt[3]{\Delta_{a,b}}  \in k^\cyc\}.
\end{align*}
By Proposition~\ref{P:Criterion}, we have
$\{(a,b) \in B_k(x): \rho_{E(a,b)}(\calG_k) \neq H_k \}  
\subseteq Y_1(x) \cup Y_2(x)  \cup Y_3(x)$,
and thus 
\[
|\{(a,b) \in B_k(x): \rho_{E(a,b)}(\calG_k) \neq H_k \}| \leq |Y_1(x)| + |Y_2(x)| + |Y_3(x)|.
\]
By Proposition \ref{P:l-adic surjectivity}, we have ${|Y_1(x)|}/{|B_k(x)|} \ll_{k,\norm{\cdot}} {(\log x)^{\beta}}/{x^{[k:\QQ]/2}}$, where $\beta\geq 1$ is an absolute constant.  By Proposition~\ref{P:Serre obstruction bound}, we have 
\[
\frac{|Y_2(x)|}{|B_k(x)|} \ll_{k,\norm{\cdot}}  \frac{\log x}{\sqrt{x}} \text{\quad  and \quad} \frac{|Y_3(x)|}{|B_k(x)|} \ll_{k,\norm{\cdot}}  \frac{\log x}{\sqrt{x}}.
\]
Combining everything together gives:
\[
\frac{|\{(a,b) \in B_k(x): \rho_{E(a,b)}(\calG_k) \neq H_k \}|}{|B_k(x)|} \ll_{k,\norm{\cdot}}  \max\Big\{ \frac{(\log x)^{\beta}}{x^{[k:\QQ]/2}}, \frac{\log x}{\sqrt x} \Big\} \ll \frac{\log x}{\sqrt{x}},
\]
where the last bound uses $k\neq \QQ$.

\subsection{Proof of Theorem~\ref{T:Jones-Serre curves}} \label{SS:Serre curves}
The theorem is easily deduced by combining the criterion of Lemma~\ref{L:Serre curve criterion} with Proposition~\ref{P:l-adic surjectivity} and Proposition~\ref{P:exceptional prime bound m} (with $m=72$).

\appendix
\section{Group theory for $\text{SL}_2$}

In this appendix, we collect several basic facts about the groups $\SL_2(\ZZ/m\ZZ)$.  We will need to pay special attention to the primes $2$ and $3$.
\subsection{Abelianizations}
\begin{lemma} \label{L:derived subgroup finite} 
Let $m$ be a positive integer, and define $b:=\gcd(m,12)$.  Reduction modulo $b$ induces an isomorphism $\SL_2(\ZZ/m\ZZ)^\ab\overset{\sim}{\to} \SL_2(\ZZ/b\ZZ)^\ab$.  The group $\SL_2(\ZZ/m\ZZ)^\ab$ is cyclic of order $b$.
\end{lemma}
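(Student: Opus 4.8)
The plan is to reduce to prime powers via the Chinese Remainder Theorem, dispose of the primes $\geq 5$ by a perfectness argument, and analyze the $2$- and $3$-adic congruence filtrations separately.

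Write $m=\prod_p p^{e_p}$, so that $\SL_2(\ZZ/m\ZZ)\cong\prod_p\SL_2(\ZZ/p^{e_p}\ZZ)$; since abelianization commutes with finite products it suffices to compute each $\SL_2(\ZZ/p^{e}\ZZ)^\ab$ and track the reduction maps. For $p\geq 5$ the group $\SL_2(\ZZ/p^{e}\ZZ)$ is perfect: it is generated by the elementary matrices $\bigl(\begin{smallmatrix}1&t\\0&1\end{smallmatrix}\bigr)$ and $\bigl(\begin{smallmatrix}1&0\\t&1\end{smallmatrix}\bigr)$, and with $h=\mathrm{diag}(2,2^{-1})$ one has $h\bigl(\begin{smallmatrix}1&1\\0&1\end{smallmatrix}\bigr)h^{-1}\bigl(\begin{smallmatrix}1&1\\0&1\end{smallmatrix}\bigr)^{-1}=\bigl(\begin{smallmatrix}1&3\\0&1\end{smallmatrix}\bigr)$; as $3$ is a unit mod $p^e$ this commutator generates all upper elementary matrices, and conjugation by $\bigl(\begin{smallmatrix}0&-1\\1&0\end{smallmatrix}\bigr)$ gives the lower ones, so $\SL_2(\ZZ/p^e\ZZ)'=\SL_2(\ZZ/p^e\ZZ)$. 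Thus the factors with $p\geq 5$ contribute nothing.

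For $p\in\{2,3\}$ I would use the principal congruence filtration. Fix $p^e$ with $e\geq 2$ and for $1\leq j<e$ set $\Gamma_j=\ker(\SL_2(\ZZ/p^e\ZZ)\to\SL_2(\ZZ/p^j\ZZ))$; then $I+p^jA\mapsto A$ identifies $\Gamma_j/\Gamma_{j+1}$ with $\mathfrak{sl}_2(\FF_p)$ carrying the adjoint $\SL_2(\FF_p)$-action. If the coinvariants $\mathfrak{sl}_2(\FF_p)_{\SL_2(\FF_p)}$ vanish, then $[\SL_2(\ZZ/p^e\ZZ),\Gamma_j]\cdot\Gamma_{j+1}=\Gamma_j$ at every layer, and downward induction on $j$ yields $\Gamma_1\subseteq\SL_2(\ZZ/p^e\ZZ)'$, so reduction mod $p$ induces $\SL_2(\ZZ/p^e\ZZ)^\ab\cong\SL_2(\FF_p)^\ab$. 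For $p$ odd the trace form is a nondegenerate invariant pairing, whence $\mathfrak{sl}_2(\FF_p)\cong\mathfrak{sl}_2(\FF_p)^\ast$ equivariantly and the coinvariants are dual to the adjoint invariants, namely the trace-zero scalars, which vanish since $p\nmid 2$. This settles $p=3$; one then checks directly that $\SL_2(\FF_3)'$ is the order-$8$ quaternion subgroup, so $\SL_2(\FF_3)^\ab\cong\ZZ/3\ZZ$, while $\SL_2(\FF_2)\cong S_3$ gives $\SL_2(\FF_2)^\ab\cong\ZZ/2\ZZ$.

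The case $p=2$ is the main obstacle, and the reason $2$ and $3$ must be separated: in characteristic $2$ the identity matrix is trace-free, $\FF_2 I\subseteq\mathfrak{sl}_2(\FF_2)$ is a trivial submodule, the trace form is degenerate, and in fact $\mathfrak{sl}_2(\FF_2)\cong\FF_2 I\oplus W$ with $W$ the natural module, so $\mathfrak{sl}_2(\FF_2)_{\SL_2(\FF_2)}\cong\FF_2$. The surviving class at the layer $\Gamma_j/\Gamma_{j+1}$ is represented by $d_j:=\mathrm{diag}(1+2^j,(1+2^j)^{-1})\in\Gamma_j\setminus\Gamma_{j+1}$, using $(1+2^j)^{-1}\equiv 1+2^j\bmod 2^{j+1}$. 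For $j\geq 3$ one has $1+2^j\equiv 1\bmod 8$, hence $1+2^j=s^2$ for a unit $s$, and then $d_j^{-1}=\mathrm{diag}(s^{-2},s^2)$ equals $w\,\mathrm{diag}(s,s^{-1})\,w^{-1}\,\mathrm{diag}(s,s^{-1})^{-1}$ with $w=\bigl(\begin{smallmatrix}0&-1\\1&0\end{smallmatrix}\bigr)$, so $d_j$ is a commutator. The downward induction therefore still applies from level $3$ on, giving $\Gamma_3\subseteq\SL_2(\ZZ/2^e\ZZ)'$ for $e\geq 3$, so reduction mod $8$ induces an isomorphism on abelianizations; a direct computation in $\SL_2(\ZZ/8\ZZ)$ (order $384$) gives $\SL_2(\ZZ/8\ZZ)^\ab\cong\ZZ/4\ZZ$, which by surjectivity of reduction also equals $\SL_2(\ZZ/4\ZZ)^\ab$, so reduction mod $4$ is an isomorphism on abelianizations for all $e\geq 2$. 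Finally, reassembling via the Chinese Remainder Theorem, $\SL_2(\ZZ/m\ZZ)^\ab\cong\ZZ/2^{\min(e_2,2)}\ZZ\times\ZZ/3^{\min(e_3,1)}\ZZ$, which is cyclic of order $2^{\min(e_2,2)}3^{\min(e_3,1)}=\gcd(m,12)=b$, and since each factor identification is induced by reduction modulo the relevant prime power, the total isomorphism is induced by reduction modulo $b$. I expect essentially all the work to sit in the $p=2$ step: tracking the torus classes $d_j$ and running the finite verification at level $8$; everything else is formal.
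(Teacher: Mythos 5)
Your argument is correct, but it takes a genuinely different route from the paper. The paper works globally: it quotes the presentation $\PSL_2(\ZZ)\cong\langle A,B: A^2=B^3=1\rangle$ to see that $\PSL_2(\ZZ)^\ab$ is cyclic of order $6$, uses the surjectivity of $\SL_2(\ZZ)\to\SL_2(\ZZ/m\ZZ)$ together with reader-verified computations of $\SL_2(\ZZ/2\ZZ)^\ab$, $\SL_2(\ZZ/3\ZZ)^\ab$, $\SL_2(\ZZ/4\ZZ)^\ab$ to pin down $\SL_2(\ZZ)^\ab\cong\ZZ/12\ZZ$, and then pushes everything down from $\SL_2(\ZZ)$. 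You instead work locally: CRT, perfectness of $\SL_2(\ZZ/p^e\ZZ)$ for $p\geq 5$ via an explicit elementary-matrix commutator, and the congruence filtration with its adjoint-module layers for $p=2,3$. Your route is self-contained (it avoids citing both the free-product presentation of $\PSL_2(\ZZ)$ and Shimura's surjectivity lemma) and it isolates exactly why $2$ is the hard prime (the class of $I$ in $\mathfrak{sl}_2(\FF_2)$ and its killing by the torus commutator $\mathrm{diag}(s^{-2},s^2)=[w,\mathrm{diag}(s,s^{-1})]$ only once $1+2^j\equiv 1\bmod 8$); the price is more hands-on module computation and a larger terminal finite verification ($\SL_2(\ZZ/8\ZZ)$, order $384$, versus the paper's groups of order at most $48$). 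Your coinvariant computations check out: the trace form argument for odd $p$, the decomposition $\mathfrak{sl}_2(\FF_2)\cong\FF_2 I\oplus W$ with $W$ the natural module, and the identification of the surviving class with $d_j$ are all right.

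One step is stated too quickly: from $\SL_2(\ZZ/8\ZZ)^\ab\cong\ZZ/4\ZZ$, surjectivity of $\SL_2(\ZZ/8\ZZ)\to\SL_2(\ZZ/4\ZZ)$ gives only a \emph{surjection} $\ZZ/4\ZZ\twoheadrightarrow\SL_2(\ZZ/4\ZZ)^\ab$, not an isomorphism; you still need the separate finite check that $\SL_2(\ZZ/4\ZZ)^\ab$ has order $4$ (equivalently, that the class of $\mathrm{diag}(3,3)$ survives in the abelianization of $\SL_2(\ZZ/4\ZZ)$ -- note your $j\geq 3$ commutator trick genuinely fails at $j=1,2$ since $3$ and $5$ are not squares mod $2^e$). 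This is the same kind of unperformed verification the paper delegates to the reader, so it is an elision rather than an error, but it should be stated as its own computation rather than deduced from surjectivity.
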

\begin{proof}
It is well-known that the group $\PSL_2(\ZZ):=\SL_2(\ZZ)/\{\pm I\}$ has a presentation $\langle A,B : A^2=1, B^3=1\rangle$, thus $\PSL_2(\ZZ)^\ab$ is a cyclic group of order $6$.  Under the quotient map, $\SL_2(\ZZ)'$ surjects on to $\PSL_2(\ZZ)'$, so $\SL_2(\ZZ)^\ab$ has order $6$ or $12$.

For each positive integer $m$, reduction modulo $m$ gives a surjective homomorphism $\SL_2(\ZZ)\twoheadrightarrow \SL_2(\ZZ/m\ZZ)$ (see \cite{Shimura}*{Lemma~1.38}).  
We leave it to the reader to verify that the groups $\SL_2(\ZZ/2\ZZ)^\ab$, $\SL_2(\ZZ/3\ZZ)^\ab$ and $\SL_2(\ZZ/4\ZZ)^\ab$ are cyclic of order $2$, $3$ and $4$ respectively.  We deduce that $\SL_2(\ZZ)^\ab$ is cyclic of order $12$ and that reduction modulo $12$ induces an isomorphism $\SL_2(\ZZ)^\ab \overset{\sim}{\to} \SL_2(\ZZ/12\ZZ)^\ab$.   The lemma is easily deduced from this isomorphism.
\end{proof}

\subsection{Reductions}
\begin{lemma}  \label{L:reduce to finite groups}
Let $\ell$ be a prime, $n\geq 1$ an integer, and $H$ a subgroup of $\SL_2(\ZZ/\ell^n\ZZ)$.
\begin{romanenum}
\item
If $\ell\geq 5$ and the image of $H$ modulo $\ell$ is $\SL_2(\ZZ/\ell\ZZ)$, then $H=\SL_2(\ZZ/\ell^n\ZZ)$.
\item
If $n\geq 2$ and the image of $H$ modulo $\ell^2$ is $\SL_2(\ZZ/\ell^2\ZZ)$, then $H=\SL_2(\ZZ/\ell^n\ZZ)$.
\end{romanenum}
\end{lemma}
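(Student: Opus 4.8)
The plan is to argue by d\'evissage along the congruence filtration. Write $G=\SL_2(\ZZ/\ell^n\ZZ)$ and $G_i=\ker\big(G\to\SL_2(\ZZ/\ell^i\ZZ)\big)$ for $0\le i\le n$, so that $G=G_0\supseteq G_1\supseteq\cdots\supseteq G_n=1$. Two standard facts about this filtration will do the work. First, for $1\le i\le n-1$ the assignment $I+\ell^i M\mapsto M\bmod\ell$ is a well-defined isomorphism of abelian groups $G_i/G_{i+1}\xrightarrow{\sim}\mathfrak{sl}_2(\FF_\ell)$ (with $\mathfrak{sl}_2(\FF_\ell)$ the additive group of trace-zero $2\times 2$ matrices over $\FF_\ell$), and conjugation by $G$ on this quotient factors through $G/G_1=\SL_2(\FF_\ell)$ acting by the adjoint representation $\operatorname{Ad}$. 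Second, for $\ell$ odd and $1\le i\le n-1$, and also for $\ell=2$ when $i\ge 2$, raising to the $\ell$-th power maps $G_i$ into $G_{i+1}$ and induces the identity map $\mathfrak{sl}_2(\FF_\ell)\to\mathfrak{sl}_2(\FF_\ell)$ between the graded pieces; the single exceptional case, $\ell=2$ at level $1\to 2$ (where the induced map is $M\mapsto M+M^2$), is exactly why part (ii) must begin at level $\ell^2$.

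Put $j=1$ in case (i) and $j=2$ in case (ii); the hypothesis says that $H$ surjects onto $\SL_2(\ZZ/\ell^j\ZZ)$, i.e.\ $HG_j=G$. It suffices to prove $H\supseteq G_j$, for then $H$ is the full preimage of $G/G_j$ and hence $H=G$. I would prove $H\supseteq G_j$ by showing, by induction on $i\ge j$, that $H\cap G_i$ surjects onto $G_i/G_{i+1}$; this forces $HG_i=G$ for every $i$, and $G_n=1$ then gives $H=G$. The inductive step is immediate from the second fact above: if $X\subseteq H\cap G_i$ maps onto $G_i/G_{i+1}$, then $\{x^\ell:x\in X\}\subseteq H\cap G_{i+1}$ maps onto $G_{i+1}/G_{i+2}$. (When $\ell=2$ one starts the induction at $i=2$, which is precisely what the hypothesis of (ii) supplies.)

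Everything thus reduces to the base case: $H\cap G_j$ surjects onto $G_j/G_{j+1}\cong\mathfrak{sl}_2(\FF_\ell)$. Since $G_j\trianglelefteq G$, the subgroup $N:=H\cap G_j$ is normal in $H$, so its image $V$ in $\mathfrak{sl}_2(\FF_\ell)$ is stable under $\operatorname{Ad}$ of the mod-$\ell$ image of $H$; as $H$ surjects onto $\SL_2(\ZZ/\ell^j\ZZ)$, hence onto $\SL_2(\FF_\ell)$, the subspace $V$ is an $\SL_2(\FF_\ell)$-submodule of $\mathfrak{sl}_2(\FF_\ell)$. For $\ell\ge5$ this module is irreducible and nontrivial, so $V=0$ or $V=\mathfrak{sl}_2(\FF_\ell)$. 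If $V=0$, then the image $\overline H$ of $H$ in $\SL_2(\ZZ/\ell^{j+1}\ZZ)$ meets $\ker\big(\SL_2(\ZZ/\ell^{j+1}\ZZ)\to\SL_2(\ZZ/\ell^j\ZZ)\big)=G_j/G_{j+1}$ trivially while still surjecting onto $\SL_2(\ZZ/\ell^j\ZZ)$, i.e.\ $\overline H$ would be a section of the reduction $\SL_2(\ZZ/\ell^{j+1}\ZZ)\to\SL_2(\ZZ/\ell^j\ZZ)$; so it remains to show this extension is non-split. For $j=1$ and $\ell\ge5$, every lift to $\SL_2(\ZZ/\ell^2\ZZ)$ of the order-$\ell$ element $u=\left(\begin{smallmatrix}1&1\\0&1\end{smallmatrix}\right)\bmod\ell$ again has order $\ell^2$: one computes $(u+\ell N)^\ell\equiv u^\ell+\ell\sum_{k=0}^{\ell-1}u^k N u^{\ell-1-k}\pmod{\ell^2}$, the sum reduces modulo $\ell$ to $(\operatorname{Ad}(u)-1)^{\ell-1}(N)\cdot u^{-1}=0$ since $(\operatorname{Ad}(u)-1)^3=0$ on the $3$-dimensional space $\mathfrak{sl}_2(\FF_\ell)$ and $\ell-1\ge 3$, while $u^\ell\not\equiv I\pmod{\ell^2}$. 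Hence no section exists. For $j=2$ one runs the analogous argument one level higher, where the primes $\ell=2,3$ cause no difficulty because the power map is already linear on the relevant graded piece.

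The crux — the only step that is not formal d\'evissage — is this last one: the irreducibility of $\mathfrak{sl}_2(\FF_\ell)$ as an $\SL_2(\FF_\ell)$-module together with the non-splitting of the congruence extension. The hypotheses are used exactly here: for $\ell\in\{2,3\}$ the module $\mathfrak{sl}_2(\FF_\ell)$ is reducible, and for $\ell=2$ the bottom power map is non-linear, so knowing the image only modulo $\ell$ is genuinely insufficient for those primes — which is why (i) demands $\ell\ge5$ and (ii) demands the image modulo $\ell^2$. I expect the most delicate bookkeeping to be the explicit small-prime computations that feed into part (ii).
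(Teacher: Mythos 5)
Your overall strategy --- d\'evissage along the congruence filtration, with the $\ell$-th power map carrying surjectivity from one graded piece to the next --- is exactly the paper's strategy for part (ii) (the paper simply cites Serre for part (i), so your self-contained argument there via irreducibility of $\mathfrak{sl}_2(\FF_\ell)$ and non-splitting of $\SL_2(\ZZ/\ell^2\ZZ)\to\SL_2(\ZZ/\ell\ZZ)$ is a welcome addition, and your identity $\sum_{k=0}^{\ell-1}\operatorname{Ad}(u)^k\equiv(\operatorname{Ad}(u)-1)^{\ell-1}\pmod{\ell}$ is correct and does the job for $\ell\ge 5$). The inductive step and the base case of (i) are sound.

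The gap is the base case of part (ii) for $\ell=2$. You dispose of it by saying one ``runs the analogous argument one level higher'' and that $\ell=2,3$ ``cause no difficulty because the power map is already linear on the relevant graded piece.'' But the linearity of the power map is only relevant to the inductive step $i\to i+1$; the analogous \emph{base-case} argument hinges on the irreducibility of $\mathfrak{sl}_2(\FF_\ell)$ as an $\SL_2(\FF_\ell)$-module, and this \emph{fails} for $\ell=2$: the scalars $\FF_2\cdot I$ are a trivial submodule, and in fact $\mathfrak{sl}_2(\FF_2)=\FF_2 I\oplus W$ with $W=\{0,\,I+E_{12},\,I+E_{21},\,E_{12}+E_{21}\}$ a $2$-dimensional complement. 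So the image $V$ of $H\cap G_2$ in $G_2/G_3$ has four possible values a priori, and a single non-splitting computation cannot rule out the two intermediate ones. The repair is the argument the paper actually uses (and which you already have all the ingredients for): since $H$ surjects onto $\SL_2(\ZZ/\ell^2\ZZ)$, for $B\in\{E_{12},E_{21}\}$ there is $h\in H$ with $h\equiv I+\ell B\pmod{\ell^2}$; because $B^2=0$ one computes $h^\ell\equiv I+\ell^2B\pmod{\ell^3}$ for \emph{every} prime $\ell$, so $V$ contains $E_{12}$ and $E_{21}$; and these two elements generate $\mathfrak{sl}_2(\FF_\ell)$ as an $\SL_2(\FF_\ell)$-module for all $\ell$ (for $\ell=2$, conjugating $E_{12}$ by $\left(\begin{smallmatrix}1&0\\1&1\end{smallmatrix}\right)$ produces $I+E_{12}+E_{21}$, which together with $E_{12},E_{21}$ spans). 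This replaces irreducibility by explicit module generators and closes the gap; for $\ell=3$ your irreducibility route does happen to work, since $\mathfrak{sl}_2(\FF_3)$ is irreducible and the order-of-lifts computation goes through at level $2\to 3$.
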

\begin{proof}
Part (i) is due to Serre, see \cite{Serre-abelian}*{IV-23 Lemma 3}.  We now prove (ii).  By induction, it suffices to show that for each $r\geq 2$, no \emph{proper} subgroup of $\SL_2(\ZZ/\ell^{r+1}\ZZ)$ reduces modulo $\ell^r$ to the full group $\SL_2(\ZZ/\ell^r\ZZ)$.  Let $G$ be any subgroup of $\SL_2(\ZZ/\ell^{r+1}\ZZ)$ such that $G \bmod{\ell^r} = \SL_2(\ZZ/\ell^r\ZZ)$.   It suffices to show that $G$ contains the abelian group $\mathfrak{s} := \{ A \in \SL_2(\ZZ/\ell^{r+1}\ZZ) :  A \equiv I \bmod{\ell^r} \}$.

The group $\mathfrak{s}$ has a natural structure as a $\SL_2(\ZZ/\ell\ZZ)$-module; i.e., conjugate by any lift to $\SL_2(\ZZ/\ell^{r+1}\ZZ)$.  As an $\SL_2(\ZZ/\ell\ZZ)$-module, $\mathfrak{s}$ is generated by 
\[
I + \ell^r\left(\begin{smallmatrix}0 & 1 \\0 & 0\end{smallmatrix}\right)  \text{ \quad and \quad } I + \ell^r\left(\begin{smallmatrix}0 & 0 \\1 & 0\end{smallmatrix}\right).
\]
Since $G \bmod \ell =\SL_2(\ZZ/\ell\ZZ)$, we find that $G\cap \mathfrak{s}$ is a $\SL_2(\ZZ/\ell\ZZ)$-submodule of $\mathfrak{s}$.  Take any $B \in \{\left(\begin{smallmatrix}0 & 1 \\0 & 0\end{smallmatrix}\right),  \left(\begin{smallmatrix}0 & 0 \\1 & 0\end{smallmatrix}\right)  \}$.  We shall now show that $I+\ell^r B \in G$, which will complete the proof of (ii).
By assumption, there exists a $g\in G$ such that $g \equiv I+\ell^{r-1}B \bmod{\ell^r}$.  Taking $\ell$-th powers, and using $r\geq 2$ and $B^2=0$, we find that $I+\ell^rB = g^\ell \in G$.
\end{proof} 

\begin{remark}
Lemma~\ref{L:reduce to finite groups}(i) is not true for $\ell=2$ and $3$ (see \cite{Serre-abelian}*{IV-28 Exercises 2 and 3}).  
\end{remark}

\subsection{Goursat's lemma}
For a finite group $G$, let $\calJ(G)$ be the set of non-abelian simple groups, up to isomorphism, which occur in some/any composition series of $G$.  

\begin{lemma}[Goursat's lemma] \label{L:Goursat}
Let $G_1,\dots,G_n$ be finite groups, and assume that for each $i\neq j$,  $\calJ(G_i) \cap \calJ(G_j) = \emptyset$ and  $\gcd(|G_i^\ab|,|G_j^\ab|)=1$.  Let $H$ be a subgroup of $G_1\times \cdots \times G_n$ such that $\pr_i(H)=G_i$ for every projection $\pr_i \colon G_1\times \cdots \times G_n  \to G_i$.  Then $H=G_1\times \cdots \times G_n$.
\end{lemma}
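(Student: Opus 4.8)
The plan is to argue by induction on $n$, reducing everything to the two-factor case of Goursat's lemma. For $n=1$ there is nothing to prove. Suppose $n\geq 2$ and the statement holds for fewer factors. Put $K := G_1\times\cdots\times G_{n-1}$, and let $\pr_K \colon G_1\times\cdots\times G_n \to K$ be the obvious projection. The hypotheses on the $\calJ(G_i)$ and the $|G_i^\ab|$ hold in particular for $G_1,\dots,G_{n-1}$, and the image $\pr_K(H)$ still projects onto each $G_i$ with $i<n$; so the inductive hypothesis applied to $\pr_K(H)\leq K$ gives $\pr_K(H)=K$. Since also $\pr_n(H)=G_n$, the subgroup $H\leq K\times G_n$ surjects onto each factor.

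Now apply the classical (two-factor) form of Goursat's lemma to $H\leq K\times G_n$: there are normal subgroups $N\trianglelefteq K$ and $M\trianglelefteq G_n$ and an isomorphism $K/N \xrightarrow{\sim} G_n/M$; write $Q$ for this common quotient. (Concretely one takes $N=\{a\in K:(a,1)\in H\}$ and $M=\{c\in G_n:(1,c)\in H\}$, checks these are normal, and checks that $H/(N\times M)$ sits inside $(K/N)\times(G_n/M)$ as the graph of an isomorphism.) It suffices to prove $Q=1$: then $N=K$ and $M=G_n$, whence $H=K\times G_n=G_1\times\cdots\times G_n$.

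To show $Q=1$, first note $Q$ is solvable. Indeed $Q$ is a quotient of $K=G_1\times\cdots\times G_{n-1}$, so $\calJ(Q)\subseteq\calJ(K)=\calJ(G_1)\cup\cdots\cup\calJ(G_{n-1})$; and $Q$ is a quotient of $G_n$, so $\calJ(Q)\subseteq\calJ(G_n)$. By hypothesis $\calJ(G_i)\cap\calJ(G_n)=\emptyset$ for every $i<n$, so $\calJ(Q)=\emptyset$, i.e.\ $Q$ has no non-abelian simple composition factor, hence is solvable. Next, $Q^\ab$ is a quotient of $K^\ab=G_1^\ab\times\cdots\times G_{n-1}^\ab$, so $|Q^\ab|$ divides $\prod_{i<n}|G_i^\ab|$; and $Q^\ab$ is a quotient of $G_n^\ab$, so $|Q^\ab|$ divides $|G_n^\ab|$. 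Since each $|G_i^\ab|$ with $i<n$ is coprime to $|G_n^\ab|$, these two divisibilities force $|Q^\ab|=1$. A nontrivial finite solvable group has nontrivial abelianization (by solvability its derived subgroup is proper), so $Q=1$, completing the induction.

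The only genuinely non-formal inputs are the two-factor Goursat lemma and the two standard facts that a finite group with empty $\calJ$ is solvable and that a nontrivial finite solvable group has nontrivial abelianization; the rest is bookkeeping with composition factors and abelianizations under direct products and quotients. I expect the one point deserving care to be the verification that $Q$, being simultaneously a quotient of $K$ and of $G_n$, inherits both the ``empty $\calJ$'' and the ``coprime $|\cdot^\ab|$'' constraints — that is, correctly invoking that composition factors and abelianizations only shrink under passage to quotients and behave additively/multiplicatively under direct products.
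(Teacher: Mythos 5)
Your proof is correct and follows essentially the same route as the paper: reduce by induction to a two-factor situation, apply the classical Goursat decomposition, and then kill the common quotient $Q$ by observing that the $\calJ$-hypothesis forces $Q$ to be solvable while the coprimality of the abelianizations forces $Q^{\ab}=1$. The only cosmetic difference is that you group the first $n-1$ factors into $K$ and re-run the two-factor argument for the pair $(K,G_n)$, whereas the paper phrases the induction as a direct reduction to the case $n=2$; the content is identical.
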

\begin{proof}
By induction, we may reduce to the case $n=2$.  Define $N_1 = \pr_1( H \cap (G_1 \times \{1\} ))$ and $N_2 = \pr_2( H \cap ( \{1\}\times G_2 ))$ which are normal subgroups of $G_1$ and $G_2$ respectively.   The image of $H$ in $G_1/N_1 \times G_2/N_2$ is the graph of an isomorphism
\begin{equation} \label{E:Goursat}
G_1/N_1 \cong G_2/N_2;
\end{equation}
this fact is usually called \emph{Goursat's lemma} (see \cite{Ribet-76}*{Lemma~5.2.1}). We deduce that $\calJ(G_1/N_1) \subseteq \calJ(G_1) \cap \calJ(G_2) = \emptyset$,
thus the group $G_1/N_1$ is solvable.  The groups $G_1$ and $G_2$ have no common abelian quotients besides $1$ (this follows from the assumption $\gcd(|G_1^\ab|,|G_2^\ab|)=1$), so from (\ref{E:Goursat}) and the solvability, we deduce that $G_1=N_1$ and $G_2=N_2$.  From the definition of the $N_i$, we find that $H$ contains $\{1\} \times G_2$ and $G_1 \times \{1\}$, hence $H= G_1\times G_2$.
\end{proof}

\begin{lemma}[\cite{LangAlgebra}*{XIII Theorem 8.4}] \label{L:simple groups}
For $\ell\geq 5$, $\PSL_2(\ZZ/\ell\ZZ) := \SL_2(\ZZ/\ell\ZZ)/\{\pm I\}$ is a non-abelian simple group of order $(\ell^3-\ell)/2$.  The groups $\SL_2(\ZZ/2\ZZ)$ and $\SL_2(\ZZ/3\ZZ)$ are solvable.
\end{lemma}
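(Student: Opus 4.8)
The plan is to dispose of the order computation and non-abelianness first, then prove simplicity of $\PSL_2(\FF_\ell)$ for $\ell\ge 5$ via Iwasawa's criterion, and finally treat $\ell=2,3$ by hand. For the order: $\GL_2(\FF_\ell)$ acts simply transitively on ordered bases of $\FF_\ell^2$, so $|\GL_2(\FF_\ell)|=(\ell^2-1)(\ell^2-\ell)$; the determinant is a surjection onto $\FF_\ell^\times$ with kernel $\SL_2(\FF_\ell)$, whence $|\SL_2(\FF_\ell)|=\ell(\ell^2-1)=\ell^3-\ell$; and since $\ell$ is odd, $-I\ne I$ generates the centre of $\SL_2(\FF_\ell)$, so $|\PSL_2(\FF_\ell)|=(\ell^3-\ell)/2$. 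That $\PSL_2(\FF_\ell)$ is non-abelian for $\ell\ge 5$ follows since $\left(\begin{smallmatrix}1&1\\0&1\end{smallmatrix}\right)$ and $\left(\begin{smallmatrix}1&0\\1&1\end{smallmatrix}\right)$ do not commute modulo $\{\pm I\}$.

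For simplicity I would apply Iwasawa's criterion to the action of $G:=\PSL_2(\FF_\ell)$ on the projective line $\PP^1(\FF_\ell)$, a set of $\ell+1$ points. One checks: the action is faithful, since an element of $\SL_2(\FF_\ell)$ fixing every line is scalar and the scalars in $\SL_2(\FF_\ell)$ are $\{\pm I\}$; the action is $2$-transitive, hence primitive, because any ordered pair of distinct points is the image of $([1:0],[0:1])$ under a suitable element, obtained by rescaling a column of a matrix with the prescribed columns so that its determinant is $1$; the group $G$ is perfect, since for $\ell\ge 5$ we have $\gcd(\ell,12)=1$, so Lemma~\ref{L:derived subgroup finite} gives $\SL_2(\FF_\ell)^\ab=1$ and hence $G^\ab=1$; and the stabiliser of $[1:0]$ is the image $B$ of the upper-triangular Borel subgroup, which contains the normal abelian subgroup $A$ equal to the image of $\{\left(\begin{smallmatrix}1&\lambda\\0&1\end{smallmatrix}\right):\lambda\in\FF_\ell\}$, whose conjugates generate $G$ because $\SL_2(\FF_\ell)$ is generated by its transvections (Gaussian elimination, noting $A$ and its transpose are conjugate by $\left(\begin{smallmatrix}0&1\\-1&0\end{smallmatrix}\right)$). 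Iwasawa's lemma then yields that $G$ is simple. An alternative, more computational route kept in reserve is the classical transvection argument: show a normal subgroup $N\not\subseteq\{\pm I\}$ contains a nontrivial transvection by picking $g\in N$ outside the centre, reducing $g$ to a convenient form, and computing the commutator $[g,\left(\begin{smallmatrix}1&1\\0&1\end{smallmatrix}\right)]$, then show one transvection forces all of them (diagonal conjugation scales the off-diagonal entry by a square, and every element of $\FF_\ell$ is a sum of two squares), so $N=\SL_2(\FF_\ell)$. The main obstacle in either approach is the regular semisimple element with irreducible characteristic polynomial — the non-split torus case — where the commutator computation is least transparent; the Iwasawa packaging is attractive precisely because it avoids this, reducing everything to the faithful primitive action, perfectness (free from Lemma~\ref{L:derived subgroup finite}), and the generation statement for transvections.

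For the exceptional cases I would argue directly. When $\ell=2$, $|\SL_2(\FF_2)|=6$ and the action on the three points of $\PP^1(\FF_2)$ identifies $\SL_2(\FF_2)$ with $S_3$, which is solvable. When $\ell=3$, Lemma~\ref{L:derived subgroup finite} gives $\SL_2(\FF_3)^\ab$ cyclic of order $3$, so $\SL_2(\FF_3)'$ has order $8$; a group of order $8$ is a $2$-group, hence nilpotent, hence solvable, and therefore $\SL_2(\FF_3)$ is solvable. This completes the plan.
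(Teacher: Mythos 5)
Your proposal is correct, but the comparison here is not between two proofs: the paper does not prove this lemma at all, it simply cites Lang, \emph{Algebra}, XIII Theorem 8.4, so you have supplied a self-contained argument where the author chose a reference. Your order computation and the non-commuting unipotents $\left(\begin{smallmatrix}1&1\\0&1\end{smallmatrix}\right)$, $\left(\begin{smallmatrix}1&0\\1&1\end{smallmatrix}\right)$ are fine, and the Iwasawa route is sound: the action of $\PSL_2(\FF_\ell)$ on $\PP^1(\FF_\ell)$ is faithful and $2$-transitive, the stabiliser of $[1:0]$ contains the normal abelian image of the upper unipotents, whose conjugates generate since $\SL_2(\FF_\ell)$ is generated by transvections. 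Your appeal to Lemma~\ref{L:derived subgroup finite} for perfectness is legitimate and non-circular, because that lemma is proved from the presentation of $\PSL_2(\ZZ)$ and surjectivity of reduction, with no input from simplicity; still, it is worth knowing that perfectness for $\ell\ge 5$ also follows directly from the commutator identity for diagonal conjugation of a transvection (conjugating $\left(\begin{smallmatrix}1&\lambda\\0&1\end{smallmatrix}\right)$ by $\mathrm{diag}(t,t^{-1})$ scales $\lambda$ by $t^2$, and $t^2-1\ne 0$ for some $t$ once $\ell\ge 5$), which would make the lemma independent of the rest of the appendix. The $\ell=2,3$ cases ($\SL_2(\FF_2)\cong S_3$; $\SL_2(\FF_3)$ has derived subgroup of order $8$, a $2$-group) are correct. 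What each approach buys: the citation keeps the appendix short and defers to a standard source (whose proof, incidentally, is essentially your ``reserve'' transvection argument rather than Iwasawa's criterion); your version makes the appendix self-contained at the cost of a page, and the Iwasawa packaging cleanly sidesteps the non-split torus case that makes the direct commutator computation awkward.
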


\begin{lemma} \label{L:SL Goursat finite}
Let $m$ and $n$ be relatively prime positive integers and let $H$ be a subgroup of $\SL_2(\ZZ/mn\ZZ)$.  Then $H=\SL_2(\ZZ/mn\ZZ)$ if and only if $H$ surjects onto $\SL_2(\ZZ/m\ZZ)$ and $\SL_2(\ZZ/n\ZZ)$ by reduction modulo $m$ and $n$, respectively.
\end{lemma}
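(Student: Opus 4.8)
The plan is to deduce this from Goursat's lemma (Lemma~\ref{L:Goursat}) after using the Chinese remainder theorem to split $\SL_2(\ZZ/mn\ZZ)$ as a direct product. The forward implication is immediate: if $H=\SL_2(\ZZ/mn\ZZ)$, then its reductions modulo $m$ and $n$ are clearly all of $\SL_2(\ZZ/m\ZZ)$ and $\SL_2(\ZZ/n\ZZ)$. For the converse, I would first note that the ring isomorphism $\ZZ/mn\ZZ \cong \ZZ/m\ZZ \times \ZZ/n\ZZ$ induces a group isomorphism $\SL_2(\ZZ/mn\ZZ) \cong \SL_2(\ZZ/m\ZZ) \times \SL_2(\ZZ/n\ZZ)$ under which reduction modulo $m$ (resp.\ modulo $n$) corresponds to the projection $\pr_1$ (resp.\ $\pr_2$). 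Writing $G_1 = \SL_2(\ZZ/m\ZZ)$ and $G_2 = \SL_2(\ZZ/n\ZZ)$, the hypothesis on $H$ then says exactly that $\pr_1(H) = G_1$ and $\pr_2(H) = G_2$.

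It remains to verify the two arithmetic hypotheses of Lemma~\ref{L:Goursat} for the pair $G_1, G_2$. For the abelianizations, Lemma~\ref{L:derived subgroup finite} gives that $G_1^\ab$ and $G_2^\ab$ are cyclic of orders $\gcd(m,12)$ and $\gcd(n,12)$ respectively; since $\gcd(m,n) = 1$, these orders are coprime. For the non-abelian composition factors, decompose $G_1 \cong \prod_{p^e \| m} \SL_2(\ZZ/p^e\ZZ)$ by the Chinese remainder theorem again, and similarly for $G_2$. The kernel of the reduction $\SL_2(\ZZ/p^e\ZZ) \to \SL_2(\ZZ/p\ZZ)$ is a $p$-group, so by Lemma~\ref{L:simple groups} a composition series of $\SL_2(\ZZ/p^e\ZZ)$ has its only non-abelian simple factor equal to $\PSL_2(\ZZ/p\ZZ)$ when $p \geq 5$, and has no non-abelian simple factor at all when $p \in \{2,3\}$ (since $\SL_2(\ZZ/2\ZZ)$ and $\SL_2(\ZZ/3\ZZ)$ are solvable). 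Hence $\calJ(G_1) = \{\PSL_2(\ZZ/p\ZZ) : p \mid m,\ p \geq 5\}$ and $\calJ(G_2) = \{\PSL_2(\ZZ/p\ZZ) : p \mid n,\ p \geq 5\}$, and these sets are disjoint because $\gcd(m,n)=1$ and the groups $\PSL_2(\ZZ/p\ZZ)$ for distinct primes $p \geq 5$ have distinct orders by Lemma~\ref{L:simple groups}.

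With both hypotheses checked, Lemma~\ref{L:Goursat} gives $H = G_1 \times G_2 = \SL_2(\ZZ/mn\ZZ)$, which completes the proof. The only mildly delicate point — and the step I would be most careful about — is the bookkeeping of composition factors needed to see that the $2$- and $3$-primary parts of $m$ and $n$ contribute no non-abelian simple groups, so that $\calJ(G_1) \cap \calJ(G_2) = \emptyset$; the rest is a direct appeal to the cited lemmas.
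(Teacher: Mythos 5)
Your proof is correct and takes essentially the same route as the paper's: split $\SL_2(\ZZ/mn\ZZ)$ as a product via the Chinese remainder theorem, verify the two hypotheses of Lemma~\ref{L:Goursat} using Lemma~\ref{L:derived subgroup finite} for the abelianizations and Lemma~\ref{L:simple groups} for the nonabelian composition factors, and conclude. The paper is merely terser, asserting $\calJ(\SL_2(\ZZ/d\ZZ))=\{\PSL_2(\ZZ/\ell\ZZ):\ell\mid d,\ \ell\geq 5\}$ outright where you spell out the composition-factor bookkeeping.
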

\begin{proof}
Using Lemma~\ref{L:simple groups} and the solvability of $\ell$-groups, we deduce that for any positive integer $d$, $\calJ(\SL_2(\ZZ/d\ZZ)) = \{ \PSL_2(\ZZ/\ell\ZZ) : \ell |d, \, \ell\geq 5\}.$
Since $m$ and $n$ are relatively prime, we have $\calJ(\SL_2(\ZZ/m\ZZ))\cap \calJ(\SL_2(\ZZ/n\ZZ)) = \emptyset$.  By Lemma~\ref{L:derived subgroup finite},
\[
\gcd(|\SL_2(\ZZ/m\ZZ)^\ab|, |\SL_2(\ZZ/n\ZZ)^\ab|) = \gcd(m,n,12)=1.
\]
The lemma is now a direct consequence of Lemma~\ref{L:Goursat}.
\end{proof}

\begin{lemma} \label{L:SL(Zhat) Goursat}
Let $H$ be a closed subgroup of $\SL_2(\Zhat)$.  Then $H=\SL_2(\Zhat)$ if and only if $H \bmod{4} = \SL_2(\ZZ/4\ZZ)$, $H \bmod{9} = \SL_2(\ZZ/9\ZZ)$, and $H \bmod{\ell} = \SL_2(\ZZ/\ell\ZZ)$ for all $\ell\geq 5$.  
\end{lemma}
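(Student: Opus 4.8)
The plan is to reduce this statement about the profinite group $\SL_2(\Zhat)$ to finite-level assertions, and then assemble those using Lemma~\ref{L:SL Goursat finite} and Lemma~\ref{L:reduce to finite groups}. The forward implication is immediate, since the reduction maps $\SL_2(\Zhat)\to\SL_2(\ZZ/m\ZZ)$ are surjective; so I would concentrate on the converse.

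First I would invoke the standard profinite principle: $\SL_2(\Zhat)=\varprojlim_m \SL_2(\ZZ/m\ZZ)$ is compact, and a \emph{closed} subgroup $H$ equals $\SL_2(\Zhat)$ if and only if its image $H\bmod m$ in every finite quotient $\SL_2(\ZZ/m\ZZ)$ is the whole group (surjectivity onto all finite quotients makes $H$ dense, and closedness then forces $H=\SL_2(\Zhat)$). So it suffices to prove that, under the hypotheses, $H\bmod m=\SL_2(\ZZ/m\ZZ)$ for every positive integer $m$.

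Writing $m=\prod_i \ell_i^{n_i}$ and inducting on the number of distinct prime factors, Lemma~\ref{L:SL Goursat finite} reduces us to the prime-power case: it is enough to show $H\bmod \ell^n=\SL_2(\ZZ/\ell^n\ZZ)$ for every prime $\ell$ and every $n\geq 1$. For $\ell\geq 5$, the hypothesis gives $H\bmod \ell=\SL_2(\ZZ/\ell\ZZ)$, and Lemma~\ref{L:reduce to finite groups}(i) upgrades this to $H\bmod \ell^n=\SL_2(\ZZ/\ell^n\ZZ)$ for all $n$. For $\ell=2$, the hypothesis $H\bmod 4=\SL_2(\ZZ/4\ZZ)$ handles $n\le 2$ (the case $n=1$ by further reduction modulo $2$), and Lemma~\ref{L:reduce to finite groups}(ii), applied with $\ell=2$, promotes surjectivity modulo $4$ to surjectivity modulo $2^n$ for all $n\geq 2$. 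For $\ell=3$ the argument is identical, using $H\bmod 9=\SL_2(\ZZ/9\ZZ)$ together with Lemma~\ref{L:reduce to finite groups}(ii) for $\ell=3$. This covers all prime powers, and the induction then yields $H\bmod m=\SL_2(\ZZ/m\ZZ)$ for all $m$, completing the proof.

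I do not expect a genuine obstacle here, since every needed ingredient is already available: the compactness argument for closed subgroups of a profinite group, Goursat's lemma at finite level (Lemma~\ref{L:SL Goursat finite}), and the $\ell$-power lifting results (Lemma~\ref{L:reduce to finite groups}). The only mild point of care is the inductive application of Lemma~\ref{L:SL Goursat finite}, namely checking that when one peels off a prime-power factor $\ell^{n}$ from $m$ the coprimality hypotheses of that lemma are satisfied, which is automatic.
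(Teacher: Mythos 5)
Your proposal is correct and follows essentially the same route as the paper: the paper's proof likewise reduces to checking surjectivity onto every finite quotient $\SL_2(\ZZ/m\ZZ)$, then invokes Lemma~\ref{L:SL Goursat finite} to split into prime powers and Lemma~\ref{L:reduce to finite groups} to lift from the moduli $4$, $9$, and $\ell\geq 5$ to all prime powers. The paper merely compresses these steps into two sentences, while you have spelled them out.
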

\begin{proof}
We have $H=\SL_2(\Zhat)$ if and only if $H \bmod{m} = \SL_2(\ZZ/m\ZZ)$ holds for all positive integers $m$.   By Lemmas~\ref{L:reduce to finite groups} and \ref{L:SL Goursat finite}, this equivalent to having $H \bmod{m} = \SL_2(\ZZ/m\ZZ)$ whenever $m$ is $4$, $9$, or a prime $\geq 5$.
 \end{proof}

\subsection{Conjugacy classes with fixed determinant}

\begin{lemma} \label{L:SL condition for ell}
Let $\ell$ be a prime and $H$ a subgroup of $\GL_2(\FF_\ell)$.   Fix an element $d\in \FF_\ell^\times$.  If $H\cap C \neq \emptyset$ for every conjugacy class $C$ of $\GL_2(\FF_\ell)$ with $\det(C)=\{d\}$, then $H\supseteq \SL_2(\FF_\ell)$.
\end{lemma}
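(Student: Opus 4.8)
The plan is to show that the hypothesis forces $H$ to be too large to lie inside any of the maximal subgroups of $\GL_2(\FF_\ell)$, using the classification of subgroups of $\GL_2(\FF_\ell)$ (Dickson). First I would record what the hypothesis provides. For every $t\in\FF_\ell$ with $t^2\neq 4d$, the polynomial $x^2-tx+d$ is separable and hence is the characteristic polynomial of a unique conjugacy class of $\GL_2(\FF_\ell)$, which has determinant $d$; so $H$ contains an element of determinant $d$ and trace $t$. The (at most two) values of $t$ with $t^2=4d$ occur only when $d$ is a square $c^2$, and then both the scalar class $\{cI\}$ and the class of the Jordan block $\left(\begin{smallmatrix}c&1\\0&c\end{smallmatrix}\right)$ have determinant $d$, so $H$ contains a conjugate of $\left(\begin{smallmatrix}c&1\\0&c\end{smallmatrix}\right)$. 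In particular $H$ contains an element of determinant $d$ of every trace. The prime $\ell=2$ is immediate, since $\GL_2(\FF_2)=\SL_2(\FF_2)\cong S_3$ and a subgroup meeting the classes of transpositions and of $3$-cycles is all of $S_3$; so I assume $\ell$ odd from now on.

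Next, irreducibility: at most $(\ell+1)/2$ values of $t$ make $t^2-4d$ a square, so I can pick $t$ with $t^2-4d$ a non-square, and then the corresponding element of $H$ has irreducible characteristic polynomial, so $H$ stabilizes no line of $\FF_\ell^2$, i.e.\ $H$ acts irreducibly. If $d=c^2$ is a square, raising $\left(\begin{smallmatrix}c&1\\0&c\end{smallmatrix}\right)\in H$ to the power $\ord(c)$ (which is coprime to $\ell$) yields a transvection in $H$; and an irreducible subgroup of $\GL_2(\FF_\ell)$ containing a transvection contains $\SL_2(\FF_\ell)$ (its $H$-conjugates are transvections with at least two distinct axes, and such transvections generate $\SL_2(\FF_\ell)$). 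This settles the square case, so I assume $d$ is a non-square.

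By Dickson's classification, an irreducible subgroup $H$ of $\GL_2(\FF_\ell)$ either (a) lies in the normalizer of a split or non-split maximal torus, or (b) has image $\overline H$ in $\PGL_2(\FF_\ell)$ isomorphic to $A_4$, $S_4$, or $A_5$, or (c) has $\overline H\supseteq\PSL_2(\FF_\ell)$. In case (a) the characteristic polynomials $x^2-tx+d$ realized by determinant-$d$ elements of a torus normalizer involve at most $(\ell+3)/2$ values of $t$ (in the split case $t=\alpha+d\alpha^{-1}$ for $\alpha\in\FF_\ell^\times$, together with $t=0$ coming from the reflection coset; the non-split case is analogous via the norm of $\FF_{\ell^2}/\FF_\ell$), and $(\ell+3)/2<\ell$ for $\ell\geq5$, so $H$ misses some determinant-$d$ class — a contradiction. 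In case (c), $H$ maps onto a group with derived subgroup $\PSL_2(\FF_\ell)$, so $[H,H]$ surjects onto $\PSL_2(\FF_\ell)$; but $[H,H]\subseteq\SL_2(\FF_\ell)$, and since $\SL_2(\FF_\ell)$ is perfect for $\ell\geq5$ its only subgroup surjecting onto $\SL_2(\FF_\ell)/\{\pm I\}$ is $\SL_2(\FF_\ell)$ itself, so $\SL_2(\FF_\ell)=[H,H]\subseteq H$.

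The main obstacle is case (b). There every $h\in H$ has eigenvalue ratio a root of unity $\zeta$ of order at most $5$, so $\tr(h)^2/\det(h)=2+\zeta+\zeta^{-1}$ takes at most five values on $H$, whence the traces of the determinant-$d$ elements of $H$ lie in a set of size at most $10$; this contradicts the second paragraph once $\ell\geq11$. The remaining primes $\ell=5,7$ I would dispatch by hand: for $\ell=7$ the group $A_5$ does not embed in $\PGL_2(\FF_7)$, and for $\overline H\in\{A_4,S_4\}$ one checks from the residues modulo $\ell$ of the numbers $2+\zeta+\zeta^{-1}$ that not every trace value is attained; for $\ell=5$ one has $A_5\cong\PSL_2(\FF_5)$, so that sub-case falls under (c), and $A_4,S_4$ are handled as for $\ell=7$. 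Thus the points demanding care are the exceptional subgroups $A_4,S_4,A_5$ and the bookkeeping for the small primes $\ell\in\{5,7\}$, where the clean trace-counting bound $10<\ell$ is not yet available.
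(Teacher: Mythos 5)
Your route is genuinely different from the paper's. The paper splits on whether $d$ is a square: for $d=b^2$ it extracts the unipotent class $b\left(\begin{smallmatrix}1&1\\0&1\end{smallmatrix}\right)$ and invokes Serre's Proposition~15 ($\ell\mid |H|$ forces Borel or $\supseteq\SL_2$); for $d$ a non-square it runs a Jordan-style counting argument to show the covering $\bigcup_g gH_dg^{-1}$ of the determinant-$d$ locus is disjoint, deduces that $H$ contains both a split and a non-split Cartan, and concludes via Serre. You instead get irreducibility from an elliptic class, settle the square case with a transvection inside an irreducible group (clean, and valid for every $\ell$), and attack the non-square case with Dickson's classification plus trace counting. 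Cases (a) and (c) are fine for $\ell\ge5$. In case (b), note that $\zeta+\zeta^{-1}$ takes up to \emph{six} values (two for $\zeta$ of order $5$), so determinant-$d$ elements of $H$ realize at most $12$ traces; since $A_5$ does embed in $\PGL_2(\FF_{11})$, the prime $\ell=11$ also needs the finer mod-$\ell$ check, not only $\ell=5,7$. (It works out: the values of $2+\zeta+\zeta^{-1}$ in $\FF_{11}$ are $\{0,1,2,4,5,9\}$, and $1,4,5,9$ are squares while $2$ is not, so $d\cdot v$ cannot be a square for all nonzero $v$ in this set and some trace is missed.)

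The genuine gap is $\ell=3$ with $d$ a non-square: every step of your non-square analysis assumes $\ell\ge5$, and this case is never revisited. Moreover this gap cannot be closed, because the lemma as stated is false there. Take $\ell=3$, $d=2$, and let $H$ be the normalizer of a non-split Cartan, a group of order $16$. Writing $\FF_9=\FF_3(i)$ with $i^2=-1$ and letting the torus act on $\FF_9$ in the basis $(1,i)$: the four torus elements $\pm1\pm i$ have norm $2$ and characteristic polynomials $x^2+x+2$, $x^2+2x+2$, which are exactly the two elliptic determinant-$2$ classes of $\GL_2(\FF_3)$; the outer coset contains the Frobenius $\left(\begin{smallmatrix}1&0\\0&-1\end{smallmatrix}\right)$, representing the unique split semisimple determinant-$2$ class. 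So $H$ meets all three determinant-$2$ classes, yet $|H|=16<24=|\SL_2(\FF_3)|$. This is precisely where your case-(a) bound degenerates ($(\ell+3)/2=\ell$ at $\ell=3$), and the same $H$ shows that the paper's concluding step for non-square $d$ (containing split and non-split Cartans forces $H=\GL_2(\FF_\ell)$) also fails at $\ell=3$. None of this affects the paper downstream, since the lemma is only ever applied with $d=1$ or with $\ell\ge5$; but your write-up should either restrict to $\ell\ne 3$ when $d$ is a non-square or record the counterexample.
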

\begin{proof}
First suppose that $d=b^2$ for some $b\in \FF_\ell^\times$.  The group $H$ then contains an element conjugate in $\GL_2(\FF_\ell)$ to $b\cdot\left(\begin{smallmatrix} 1 & 1 \\ 0 & 1 \end{smallmatrix}\right)$.  In particular, $|H|\equiv 0\bmod{\ell}$.  By \cite{Serre-Inv72}*{Proposition~15} (which needs the condition $|H|\equiv 0 \bmod{\ell}$), we deduce that either $H$ is contained in a Borel subgroup of $\GL_2(\FF_\ell)$ or $H$ contains $\SL_2(\FF_\ell)$.   If $H$ was contained in a Borel subgroup, then the main hypothesis of the lemma would imply that every semisimple matrix in $\GL_2(\FF_\ell)$ of determinant $d$ is diagonalizable over $\FF_\ell$; which is false.  Therefore, $H\supseteq \SL_2(\FF_\ell)$.

Now suppose that $d$ is not a square in $\FF_\ell^\times$.   Without loss of generality, we may assume that $H$ contains the scalar matrices in $\GL_2(\FF_\ell)$.  Since $d$ is   not a square, we have $\det(H)=\FF_\ell^\times$.  

We shall now show that $H=\GL_2(\FF_\ell)$.  Let $H_d$ be the set of elements of $H$ with determinant $d$.  The main hypothesis of the lemma implies that
\begin{equation} \label{E:Jordan}
\{ A \in \GL_2(\FF_\ell): \det(A)=d\} =  \bigcup_{g\in \GL_2(\FF_\ell)/H} g H_d g^{-1}.
\end{equation}
By counting both sides, we find that the expression (\ref{E:Jordan}) must be a disjoint union.  Therefore
\begin{equation}\label{E:Jordan middle step}
\bigcup_{h \in H_d} \{ g \in \GL_2(\FF_\ell) : g h g^{-1} \in H\} \subseteq H.
\end{equation}
Using (\ref{E:Jordan}) and (\ref{E:Jordan middle step}), we find that $H$ contains both split and non-split Cartan subgroups of $\GL_2(\FF_\ell)$ (see \cite{Serre-Inv72}*{\S2.1} for definitions).  Using Propositions~17 and 14 of \cite{Serre-Inv72}, we deduce that $H=\GL_2(\FF_\ell)$.
\end{proof}

\begin{lemma} \label{L:SL condition for m}
Let $m$ be a positive integer and let $H$ be a subgroup of $\GL_2(\ZZ/m\ZZ)$. 
If $H\cap C \neq \emptyset$ for every conjugacy class $C$ of $\GL_2(\ZZ/m\ZZ)$ with determinant $1$, then $H\supseteq \SL_2(\ZZ/m\ZZ)$.
\end{lemma}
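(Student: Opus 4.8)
The statement to prove is Lemma~\ref{L:SL condition for m}: if a subgroup $H$ of $\GL_2(\ZZ/m\ZZ)$ meets every conjugacy class of determinant $1$, then $H \supseteq \SL_2(\ZZ/m\ZZ)$. The natural plan is to reduce to the prime-power case via the Chinese remainder theorem and then to the prime case, which is exactly Lemma~\ref{L:SL condition for ell}. First I would factor $m = \prod \ell_i^{n_i}$, so that $\GL_2(\ZZ/m\ZZ) = \prod_i \GL_2(\ZZ/\ell_i^{n_i}\ZZ)$ and $\SL_2(\ZZ/m\ZZ) = \prod_i \SL_2(\ZZ/\ell_i^{n_i}\ZZ)$. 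The key point is that a conjugacy class of the product is a product of conjugacy classes of the factors, and the determinant of the class is the tuple of determinants; so the determinant-$1$ classes of $\GL_2(\ZZ/m\ZZ)$ are exactly the products $\prod_i C_i$ where each $C_i$ is a determinant-$1$ conjugacy class of $\GL_2(\ZZ/\ell_i^{n_i}\ZZ)$.

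The slightly delicate step is passing from "$H$ meets every determinant-$1$ class of the product" to "the image $H_i$ of $H$ under the $i$-th projection meets every determinant-$1$ class of the $i$-th factor." This is immediate: given a determinant-$1$ class $C_i$ in the $i$-th factor, choose any determinant-$1$ classes $C_j$ in the other factors (possible since $\SL_2$ is nonempty), form the product class, intersect with $H$, and project. So each $H_i$ meets every determinant-$1$ conjugacy class of $\GL_2(\ZZ/\ell_i^{n_i}\ZZ)$. It therefore suffices to treat a single prime power $m = \ell^n$.

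For $m = \ell^n$, reduce modulo $\ell$: the image $\bar H$ of $H$ in $\GL_2(\FF_\ell)$ meets every determinant-$1$ conjugacy class of $\GL_2(\FF_\ell)$ (a determinant-$1$ class mod $\ell$ lifts to a determinant-$1$ class mod $\ell^n$, since one can lift a representative and then adjust by an element of the determinant-$1$ part of the kernel to fix the determinant). By Lemma~\ref{L:SL condition for ell}, $\bar H \supseteq \SL_2(\FF_\ell)$. Now I would invoke the reduction lemma: for $\ell \geq 5$, Lemma~\ref{L:reduce to finite groups}(i) upgrades $\bar H \supseteq \SL_2(\FF_\ell)$ to $H \supseteq \SL_2(\ZZ/\ell^n\ZZ)$ directly. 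The main obstacle is the small primes $\ell = 2, 3$, where Lemma~\ref{L:reduce to finite groups}(i) fails; here I expect to need the mod-$\ell^2$ information and Lemma~\ref{L:reduce to finite groups}(ii), which means I must first show the image of $H$ mod $\ell^2$ contains $\SL_2(\ZZ/\ell^2\ZZ)$. For that I would argue directly that the image of $H$ mod $\ell^2$ already meets every determinant-$1$ conjugacy class of $\GL_2(\ZZ/\ell^2\ZZ)$ and then show, by a short case analysis using the known structure of $\SL_2(\ZZ/4\ZZ)$ and $\SL_2(\ZZ/9\ZZ)$ (or, more robustly, by applying Lemma~\ref{L:reduce to finite groups}(ii) after combining with the $\ell = 2$ and $\ell = 3$ analogues of Lemma~\ref{L:SL condition for ell} at level $\ell^2$), that this forces the image to contain $\SL_2(\ZZ/\ell^2\ZZ)$. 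Assembling the prime-power results back through the Chinese remainder theorem then gives $H \supseteq \prod_i \SL_2(\ZZ/\ell_i^{n_i}\ZZ) = \SL_2(\ZZ/m\ZZ)$.
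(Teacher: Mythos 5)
Your overall route (reduce by CRT to prime powers, apply Lemma~\ref{L:SL condition for ell} modulo $\ell$, then lift via Lemma~\ref{L:reduce to finite groups}) is the same as the paper's, but as written there are two genuine gaps.

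The first is the assembly step. From ``each projection $H_i$ contains $\SL_2(\ZZ/\ell_i^{n_i}\ZZ)$'' you conclude ``by the Chinese remainder theorem'' that $H\supseteq\prod_i\SL_2(\ZZ/\ell_i^{n_i}\ZZ)$. That implication is false for general products of groups: a subgroup of $G_1\times G_2$ can surject onto both factors without being the whole product (a diagonal subgroup, for instance). For $\SL_2$ of coprime moduli the implication does hold, but only via Goursat's lemma together with the facts that the nonabelian composition factors of the $\SL_2(\ZZ/\ell_i^{n_i}\ZZ)$ are pairwise disjoint and their abelianizations have coprime orders; this is exactly Lemma~\ref{L:SL Goursat finite}, which you need to cite (the paper does, after first replacing $H$ by $H\cap\SL_2(\ZZ/m\ZZ)$). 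Alternatively, your own reduction can be repaired so as to avoid Goursat entirely: take $C_j=\{I\}$ for all $j\neq i$ (a perfectly good determinant-$1$ class), so the hypothesis produces elements of $H$ that are trivial in every factor except the $i$-th; then $H$ contains each factor $\SL_2(\ZZ/\ell_i^{n_i}\ZZ)$ embedded in the product, not merely a subgroup projecting onto it, and the product containment is immediate.

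The second gap is that for $\ell=2,3$ you defer the crux to ``a short case analysis'' that you do not supply, and your fallback --- the ``$\ell=2,3$ analogues of Lemma~\ref{L:SL condition for ell} at level $\ell^2$'' --- is circular, since that analogue is precisely the $m=4,9$ case of the lemma being proved. This is where the paper's proof does real work: writing $\mathfrak{s}=\ker\bigl(\SL_2(\ZZ/\ell^2\ZZ)\to\SL_2(\ZZ/\ell\ZZ)\bigr)$, one first gets that $H$ surjects mod $\ell$, so $H\cap\mathfrak{s}$ is normal in $\SL_2(\ZZ/\ell^2\ZZ)$; for $\ell=3$ one then uses that $H\cap\mathfrak{s}$ contains the full $12$-element conjugacy class of $\left(\begin{smallmatrix}1&3\\3&1\end{smallmatrix}\right)$ inside $|\mathfrak{s}|=27$ to force $H\cap\mathfrak{s}=\mathfrak{s}$, and for $\ell=2$ one bounds $[\SL_2(\ZZ/4\ZZ):H]\le 2$ and rules out index $2$ using the abelianization computed in Lemma~\ref{L:derived subgroup finite}. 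Without an argument of this kind your proof is incomplete exactly at the point the lemma exists to handle.
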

\begin{proof}
By replacing $H$ with $H\cap \SL_2(\ZZ/m\ZZ)$, we may assume that $H$ is a subgroup of $\SL_2(\ZZ/m\ZZ)$.  By Lemma~\ref{L:SL Goursat finite}, it suffices to consider the case where $m$ is a prime power.  By Lemma~\ref{L:reduce to finite groups}, we may further assume that $m$ is $4$, $9$, or a prime.  The case where $m$ is prime, is a consequence of Lemma~\ref{L:SL condition for ell}.  

We may thus assume that $m=\ell^2$, where $\ell=2$ or $3$.  There is an exact sequence
\[
1 \to \mathfrak{s} \to \SL_2(\ZZ/m\ZZ) \overset{\bmod{\ell}}{\to} \SL_2(\ZZ/\ell\ZZ) \to 1.
\]
Since $\mathfrak{s}$ is abelian, it has a natural $\SL_2(\ZZ/\ell\ZZ)$-action; i.e., lift to an element of $\SL_2(\ZZ/m\ZZ)$ and act via conjugate on $\mathfrak{s}$.  By the prime case of the lemma, we find that the image of $H$ modulo $\ell$ is $\SL_2(\ZZ/\ell\ZZ)$.  Therefore $H\cap \mathfrak{s}$ is a normal subgroup of $\SL_2(\ZZ/m\ZZ)$.

If $\ell=3$, then $H \cap \mathfrak{s}$ contains an element conjugate in $\GL_2(\ZZ/9\ZZ)$ to $A:=\left( \begin{smallmatrix} 1 & 3 \\ 3 & 1 \end{smallmatrix}\right)$.  The conjugacy classes of $A$ in $\SL_2(\ZZ/9\ZZ)$ and $\GL_2(\ZZ/9\ZZ)$ are equal and have cardinality 12.  Since $H \cap \mathfrak{s}$ is a normal subgroup of $\SL_2(\ZZ/9\ZZ)$, we deduce that $H\cap\mathfrak{s}$ contains at least $12$ elements.  Since $|\mathfrak{s}|=27$, we conclude that $H\cap\mathfrak{s}=\mathfrak{s}$ and hence $H=\SL_2(\ZZ/9\ZZ)$.

If $\ell=2$, then $H \cap \mathfrak{s}$ contains $B:=\left( \begin{smallmatrix} 3 & 0 \\ 0 & 3 \end{smallmatrix}\right)$ which is in the center of $\GL_2(\ZZ/4\ZZ)$.  Also $H\cap\mathfrak{s}$ must contain at least one element not in $\{I,B\}$.  Since $|\mathfrak{s}|=8$, we have
\[
[\SL_2(\ZZ/4\ZZ): H] = [\mathfrak{s}: H\cap \mathfrak{s}] \in \{1,2\}.
\]
Suppose that $[\SL_2(\ZZ/4\ZZ): H]=2$.  Then $H$ is a normal subgroup of $\SL_2(\ZZ/4\ZZ)$ with quotient cyclic of order $2$.  However, by Lemma~\ref{L:derived subgroup finite} there is only one index $2$ subgroup of $\SL_2(\ZZ/4\ZZ)$, and when reduced modulo $2$, it does not have image $\SL_2(\ZZ/2\ZZ)$.  Therefore $[\SL_2(\ZZ/4\ZZ): H]=1$.
\end{proof}

\begin{bibdiv}
\begin{biblist}

\bib{Adelmann}{book}{
   author={Adelmann, Clemens},
   title={The decomposition of primes in torsion point fields},
   series={Lecture Notes in Mathematics},
   volume={1761},
   publisher={Springer-Verlag},
   place={Berlin},
   date={2001},
   pages={vi+142},
}

\bib{Bombieri}{article}{
   author={Bombieri, E.},
   title={On exponential sums in finite fields. II},
   journal={Invent. Math.},
   volume={47},
   date={1978},
   number={1},
   pages={29--39},
}

\bib{Duke}{article}{
   author={Duke, William},
   title={Elliptic curves with no exceptional primes},
   journal={C. R. Acad. Sci. Paris S\'er. I Math.},
   volume={325},
   date={1997},
   number={8},
   pages={813--818},
}

\bib{GThesis}{thesis}{
  author={Greicius, Aaron},
  title={Elliptic curves with surjective global Galois representation},
  school = {University of California, Berkeley},
  type = {Ph.D. thesis},
  year = {2007},
  }

\bib{Jones}{article}{
   author={Jones, Nathan},
   title={Almost all elliptic curves are Serre curves}, 
   note={\href{http://arxiv.org/abs/math/0611096}{arXiv:math/0611096v1} [math.NT]},
   date={2006}
}

\bib{Jones-Trace}{article}{
   author={Jones, Nathan},
   title={Trace formulas and class number sums},
   note={\url{http://arxiv.org/abs/math/0610959}},
   note={\href{http://arxiv.org/abs/math.NT/0610959}{arXiv:math/0610959v1} [math.NT]},
   date={2006}
}

\bib{LangAlgebra}{book}{
  author={Lang, S.},
  title={Algebra},
  series={Graduate Texts in Mathematics},
  volume={211},
  edition={3},
  publisher={Springer-Verlag},
  place={New York},
  date={2002},
  pages={xvi+914},
}

\bib{Lang-Trotter}{book}{
   author={Lang, Serge},
   author={Trotter, Hale},
   title={Frobenius distributions in ${\rm GL}\sb{2}$-extensions},
   note={Distribution of Frobenius automorphisms in ${\rm
   GL}\sb{2}$-extensions of the rational numbers;
   Lecture Notes in Mathematics, Vol. 504},
   publisher={Springer-Verlag},
   place={Berlin},
   date={1976},
   pages={iii+274},
}

\bib{MasserWustholz}{article}{
   author={Masser, D. W.},
   author={W{\"u}stholz, G.},
   title={Galois properties of division fields of elliptic curves},
   journal={Bull. London Math. Soc.},
   volume={25},
   date={1993},
   number={3},
   pages={247--254},
}

\bib{Ribet-76}{article}{
   author={Ribet, Kenneth A.},
   title={Galois action on division points of Abelian varieties with real
   multiplications},
   journal={Amer. J. Math.},
   volume={98},
   date={1976},
   number={3},
   pages={751--804},
}

\bib{Serre-Inv72}{article}{
   author={Serre, Jean-Pierre},
   title={Propri\'et\'es galoisiennes des points d'ordre fini des courbes
   elliptiques},
   journal={Invent. Math.},
   volume={15},
   date={1972},
   number={4},
   pages={259--331},
}

\bib{SerreMordellWeil}{book}{
  author={Serre, Jean-Pierre},
  title={Lectures on the Mordell-Weil theorem},
  series={Aspects of Mathematics},
  edition={3},
  note={Translated from the French and edited by Martin Brown from notes by Michel Waldschmidt; With a foreword by Brown and Serre},
  publisher={Friedr. Vieweg \& Sohn},
  place={Braunschweig},
  date={1997},
  pages={x+218},
}

\bib{Serre-abelian}{book}{
   author={Serre, Jean-Pierre},
   title={Abelian $l$-adic representations and elliptic curves},
   series={Research Notes in Mathematics},
   volume={7},
   note={With the collaboration of Willem Kuyk and John Labute;
   Revised reprint of the 1968 original},
   publisher={A K Peters Ltd.},
   place={Wellesley, MA},
   date={1998},
   pages={199},
}

\bib{Shimura}{book}{
   author={Shimura, Goro},
   title={Introduction to the arithmetic theory of automorphic functions},
   series={Publications of the Mathematical Society of Japan},
   volume={11},
   note={Reprint of the 1971 original;
   Kano Memorial Lectures, 1},
   publisher={Princeton University Press},
   place={Princeton, NJ},
   date={1994},
   pages={xiv+271},
}

\end{biblist}
\end{bibdiv}

\end{document}